\documentclass[11pt, reqno, a4paper]{article}
\usepackage{color,fancyhdr}
\usepackage[top=2.5cm, bottom=2.2cm, left=2cm, right=2cm]{geometry}
\usepackage{graphicx,amsmath,amssymb,cite,booktabs}
\usepackage[colorlinks=true,breaklinks=true,linkcolor=lightblue,citecolor=lightblue,urlcolor=lightblue]{hyperref}
\usepackage{stackengine}
\usepackage{algorithm}
\usepackage{comment}
\usepackage{mathrsfs}
\usepackage[mathscr]{euscript}
\usepackage{overpic}

\usepackage{tikz}
\usetikzlibrary{decorations.pathmorphing,patterns}
 
\DeclareSymbolFontAlphabet{\amsmathbb}{AMSb}

\definecolor{lightblue}{rgb}{0.22,0.45,0.70}
\definecolor{mygray}{rgb}{0.7,0.7,0.7}

\usepackage{upgreek}

\newcommand\cero{\boldsymbol{0}}
\newcommand{\norm}[1]{\left\|#1\right\|}

\newcommand{\fracd}[2]{\displaystyle
	{\frac{{\displaystyle{#1}}}{{\displaystyle{#2}}}}}
\newcommand\vdiv{\mathrm{div}}

\newcommand\bcurl{\mathop{\mathbf{curl}}\nolimits}
\def\bdiv{\mathbf{div}}

\newcommand\tr{\mathop{\mathrm{tr}}\nolimits}

\newcommand{\beps}{\boldsymbol{\varepsilon}}

\newcommand{\bF}{\mathbf{F}}
\newcommand{\bG}{\mathbf{G}}
\newcommand{\bS}{\mathbf{S}}
\newcommand{\bbS}{\amsmathbb{S}}

\newcommand{\bA}{\mathbf{A}}
\newcommand{\bB}{\mathbf{B}}
\newcommand{\bC}{\mathbf{C}}
\newcommand{\bE}{\mathbf{E}}

\newcommand{\ba}{\mathbf{a}}
\newcommand{\bb}{\mathbf{b}}

\newcommand{\bu}{\mathbf{u}}

\newcommand{\bg}{\mathbf{g}}
\newcommand{\bq}{\mathbf{q}}
\newcommand{\bn}{\mathbf{n}}

\newcommand{\bv}{\mathbf{v}}
\newcommand{\bw}{\mathbf{w}}
\newcommand{\bx}{\mathbf{x}}

\newcommand{\bz}{\mathbf{z}}
\newcommand{\br}{\mathbf{r}}
\newcommand{\be}{\mathbf{e}}
\newcommand{\ff}{\mathbf{f}}

\renewcommand{\gg}{\mathbf{g}}

\newcommand{\brho}{\boldsymbol{\rho}}

\newcommand{\bnabla}{\boldsymbol{\nabla}}

\newcommand{\bPi}{\boldsymbol{\Pi}}

\newcommand{\sP}{\mathscr{P}}
\newcommand{\bsP}{\boldsymbol{\mathscr{P}}}

\newcommand\dt{{\,\mathrm{d}t\,}}

\newcommand\ds{{\,\mathrm{d}s\,}}

\newcommand\cA{\mathcal{A}}

\newcommand\cC{\mathcal{C}}

\newcommand\cG{\mathcal{G}}
\newcommand\cT{\mathcal{T}}

\newcommand\bK{\mathbf{K}}
\newcommand\bP{\mathbf{P}}
\newcommand\LL{\mathrm{L}}

\newcommand{\sF}{\mathscr{F}}
\newcommand{\sS}{\mathscr{S}}
\newcommand{\sR}{\mathscr{R}}

\newcommand\bV{\mathbf{V}}

\newcommand{\sD}{\mathscr{D}}
\newcommand{\sE}{\mathscr{E}}

\newcommand\R{\mathbb{R}}

\newcommand\PP{\mathrm{P}}
\newcommand\rQ{\mathrm{Q}}

\newcommand\bbH{\mathbb{H}}

\newcommand\bbP{\mathbb{P}}
\newcommand\bH{\mathbf{H}}
\newcommand\bL{\mathbf{L}}

\newcommand\rW{\mathrm{W}}
\newcommand\bbL{\mathbb{L}}
\newcommand\bbK{\mathbb{K}}

\newcommand\bbI{\mathbb{I}}
\newcommand\bZ{\mathbf{Z}}

\newcommand\HH{\mathrm{H}}

\newcommand\rd{\mathrm{d}}

\newcommand\bsigma{{\boldsymbol{\sigma}}}
\newcommand\btau{\boldsymbol{\tau}}

\newcommand\bomega{{\boldsymbol{\omega}}}
\newcommand\eeta{\boldsymbol{\eta}}
\newcommand{\bzeta}{\boldsymbol{\zeta}}

\newcommand\sk{{\mathrm{skew}}}

\def\va{a_{\rm v}}
\def\vb{b_{\rm v}}
\def\ca{a_{\rm c}}
\def\cb{b_{\rm c}}

\providecommand{\coef}[1]{\underline{#1}}      
\providecommand{\bMv}{\mathbf{M}_{\bv}}
\providecommand{\bMf}{\mathbf{M}_{\varphi}}
\providecommand{\bMt}{\mathbf{M}_{\theta}}
\providecommand{\bTc}{\mathbf{T}}
\providecommand{\blkS}{\mathsf{S}}
\providecommand{\blkH}{\mathsf{H}}
\providecommand{\blkB}{\mathsf{B}}
\providecommand{\blkC}{\mathsf{C}}
\providecommand{\blkM}{\mathsf{M}}
\providecommand{\yvec}{\underline{\mathbf{y}}}
\providecommand{\xivec}{\underline{\boldsymbol{\xi}}}

\DeclareMathOperator*{\esssup}{ess\,sup}

\newcommand{\diff}{\mathbb{D}}  

\def\wt{\widetilde}
\def\wh{\widehat}

\def\qin{{\quad\hbox{in}\quad}}
\def\qon{{\quad\hbox{on}\quad}}
\def\qan{{\quad\hbox{and}\quad}}

\numberwithin{equation}{section}
\numberwithin{figure}{section}
\numberwithin{table}{section}
\newtheorem{remark}{Remark}[section]
\newtheorem{assumption}{\sc Assumption}[section]
\newtheorem{theorem}{Theorem}[section]
\newtheorem{lemma}[theorem]{Lemma}

\newenvironment{proof}{\noindent{\it Proof.}}{\hfill$\square$}

\allowdisplaybreaks
\begin{document}

\title{Fully mixed finite element methods for the coupling \\ of viscoelasticity and reaction-diffusion models\thanks{\textbf{Funding:} This work has been partially supported by  the National Science Foundation through the grant DMS-2110781 and by the Australian Research Council through the {Future Fellowship} grant FT220100496.

\, $^{\sharp}$Corresponding author.}}
\author{
Alonso J. Bustos$^{\sharp,}$\thanks{Department of Mathematical Sciences and the Center for Mathematics and Artificial Intelligence (CMAI), George Mason University, Fairfax, VA 22030, USA.
E-mail: {\tt abustosb@gmu.edu}.}\,, \quad 
Jeonghun J. Lee\thanks{Department of Mathematics, Baylor University, Sid Richardson Science Building
1410 S.4th Street Waco, TX 76706, USA. E-mail: \texttt{jeonghun\_lee@baylor.edu}.}\,, 
\quad 
Ricardo Ruiz-Baier\thanks{School of Mathematics, Monash University, 9 Rainforest Walk, Melbourne 3800 VIC, Australia.
E-mail: \texttt{ricardo.ruizbaier@monash.edu}.}
}

\maketitle

\begin{abstract}
\noindent We propose and analyse a fully mixed finite element method for a two-way coupled mechanochemical model of calcium signalling in viscoelastic biological tissue. The mechanical response follows a Kelvin--Voigt law, with both the elastic and viscous stress components retained as unknowns together with the velocity and spin tensor, while the symmetry of the total stress is imposed weakly. The reaction-diffusion system governing the calcium concentration and the amount of inactivated inositol triphosphate is formulated in mixed form, with the diffusive flux introduced as an additional unknown. The coupling acts in both directions, with calcium entering the constitutive law through an active stress contribution and local dilation feeding back into the calcium equation through a source term proportional to the trace of the elastic stress. Global existence and uniqueness are established by reducing the coupled system to an integro-differential problem for the velocity and concentrations, from which the mixed variables are subsequently recovered. The semidiscrete scheme is formulated for general conforming finite element spaces satisfying suitable compatibility conditions and is shown to be well posed under a single inf-sup condition by eliminating the algebraic unknowns and reducing the differential-algebraic system to an ordinary differential equation. The \textit{a priori} error analysis relies on an interpolation operator that commutes with the divergence and preserves the weak symmetry pairing, leading to optimal error estimates. These results are specialised to three families of weakly symmetric finite elements, for which explicit convergence rates are obtained. For one of these families, we additionally establish a superconvergent local post-processing of the displacement. Numerical tests in two and three dimensions verify the predicted convergence rates and illustrate the method through viscoelastic benchmarks and mechanically driven calcium wavetrains.

\medskip
\noindent\textbf{MSC (2010):} 65M30; 65M15; 74D05.

\medskip 
\noindent\textbf{Keywords:} Coupled viscoelasticity/reaction-diffusion problem; fully mixed methods; stress-based formulations; weakly imposed symmetry; error estimates.
\end{abstract}

\section{Introduction}

Biological tissues exhibit a coupled solid-fluid mechanical response. Solid components, such as collagen, deform elastically and reversibly under moderate stresses, whereas fluid phases, such as water and plasma, exhibit irreversible, time-dependent deformation associated with viscosity. This dual behaviour motivates the use of viscoelastic constitutive models at the tissue scale. In general, the governing equations couple momentum balance with mass transport through reaction-diffusion mechanisms arising, for instance, from water attraction by hyaluronic acid or cytoplasmic dissipation \cite{eliahoo2024viscoelasticity}. Other effects, including plasticity, nonlinear elasticity, and poroelasticity, may also be relevant but are not considered here. We are interested in the interaction between chemical species and viscoelastic materials at the mesoscale. More specifically, we consider the dynamics of calcium and inactivated inositol triphosphate (iIT) in epithelial cells, where a continuum description remains appropriate. The balance equations for linear and angular momentum are coupled to the dynamics of calcium and iIT through active stress contributions, while the chemical dynamics depend in turn on local volume changes through a dilation-dependent source term. The chemical component of the model originates from the single-pool description of calcium oscillations introduced by Atri et al.~\cite{atri93}. Models of this type were subsequently proposed in one dimension in~\cite{jones12,kaouri19} and extended to multiple dimensions in~\cite{kaouri22}, where a Herrmann formulation of the viscoelasticity equations was coupled with a primal formulation of the reaction-diffusion system. A related variant tailored to apical constriction was studied in~\cite{kaouri22frontiers}. These works focused primarily on modelling aspects and the qualitative behaviour of the solutions, together with a preliminary stability estimate, but did not provide a well-posedness or error analysis for the proposed discretisations.

The mathematical analysis and numerical approximation of this model pose several challenges arising from the interaction between its mechanical and chemical components. In particular, the coupling involves quantities that are not primary variables in standard displacement-based or primal formulations. The calcium concentration contributes to the mechanical response through an active stress, while the elastic stress enters directly into the source term of the calcium equation. This structure motivates a formulation in which stresses and diffusive fluxes are treated as primary unknowns. Accordingly, we adopt a fully mixed formulation for both components of the problem. The viscoelasticity equations are written in terms of the elastic and viscous stresses, the velocity, and the spin tensor, with the symmetry of the total stress imposed weakly. The reaction-diffusion equation governing the calcium concentration is likewise written in mixed form, with the diffusive flux as an additional unknown. Besides providing local mass conservation and direct approximation of the flux, as usual in mixed methods~\cite{boffi13,gatica14}, this formulation gives direct access to the elastic stress entering the mechanochemical coupling, without requiring any additional reconstruction or post-processing.

Mixed formulations with weakly imposed stress symmetry have been extensively studied for time-dependent elasticity and viscoelasticity. Rognes and Winther~\cite{rognes10} analysed mixed methods with weak symmetry for quasistatic linear viscoelasticity, while Arnold and Lee~\cite{arnoldlee14} treated elastodynamics and constructed an interpolation operator that simultaneously commutes with the divergence and preserves the weak symmetry pairing, a tool that will play a central role in our error analysis. Mixed formulations for the standard linear solid were studied in~\cite{lee17,gmm21}, while related hybrid, discontinuous Galerkin, virtual element, and stress-based formulations for time-dependent problems can be found in~\cite{marquez21,meddahi23,kumar25,garcia17}. These contributions, however, concern the viscoelasticity equations in isolation or coupled at most through prescribed data. For coupled mechanics-transport problems, fully mixed and mixed-primal formulations have proved effective when the stress itself, rather than the displacement, drives the transport process. Representative examples include the stress-assisted diffusion problems analysed in~\cite{ggr18,ggr19}, where elasticity is written in stress-rotation-displacement form and diffusion in primal or mixed form, and the twofold saddle-point formulation of Biot poroelasticity with stress-dependent diffusion introduced in~\cite{gomez23}. To the best of our knowledge, none of these works addresses a two-way coupling between a viscoelastic solid with weakly symmetric total stress and a reaction-diffusion system written in mixed form.

Two features distinguish the resulting problem from those considered in the references above and account for much of the technical analysis in this paper. First, the elastic and viscous stresses are retained as separate unknowns, since the elastic stress enters directly into the calcium equation, while $\bbH(\bdiv)$-conformity is required only for their sum. As a consequence, the resulting formulation does not fit directly into a standard saddle-point framework. Second, the coupling is   two-way and nonlinear, with the calcium concentration entering the constitutive law through a Hill function and the trace of the elastic stress entering the source term of the calcium equation. Against this background, the main contributions of this work are the following.
\begin{itemize}
\item We provide a well-posedness theory for the fully mixed continuous formulation. Rather than treating the coupled system directly, we reduce it to an integro-differential problem for the velocity and concentrations. A Banach fixed-point argument yields a unique local solution on a time interval whose length can be chosen independently of the initial data, allowing the construction to be iterated to obtain a global solution. The remaining mixed variables are then recovered uniquely.

\item We develop a semidiscrete framework for general conforming finite element spaces satisfying suitable compatibility and approximation properties. Although the resulting scheme is a differential-algebraic system, its algebraic variables can be eliminated under a single inf-sup condition. This leads to an ordinary differential equation for the remaining variables and provides existence and uniqueness of the semidiscrete solution together with a discrete stability estimate.

\item We establish a priori error estimates within this general discrete framework. The analysis exploits an interpolation operator that both commutes with the divergence and preserves the weak symmetry pairing. This structure allows the spin variable to be eliminated from the main energy argument and yields optimal-order estimates.

\item We show that the abstract discrete assumptions are satisfied by three established families of weakly symmetric mixed finite elements, namely the Arnold--Falk--Winther, Farhloul--Fortin, and PEERS families, and derive the corresponding convergence rates. For the Farhloul--Fortin family, we additionally obtain a superconvergent local post-processing of the displacement by exploiting the higher approximation order of the stress space.

\item Finally, we assess the theoretical results and the behaviour of the coupled model through numerical experiments in two and three dimensions. These experiments confirm the predicted convergence rates, reproduce characteristic creep and cyclic-loading responses of the Kelvin--Voigt model, and illustrate the influence of the mechanochemical feedback on calcium wavetrains.
\end{itemize}

\medskip
\noindent
\textbf{Outline.} The remainder of this section introduces the notation and functional setting used throughout the paper. Section~\ref{sec:model} presents the governing equations for viscoelasticity and reaction–diffusion in both primal and mixed form and specifies the assumptions on the constitutive laws. In Section~\ref{sec:solvability}, we establish the well-posedness of the continuous problem through a reduced auxiliary integro-differential system. Section~\ref{sec:FE} introduces the mixed finite element discretisation and establishes the well-posedness of the continuous-in-time semidiscrete problem under suitable assumptions. In Section~\ref{sec:errors}, we derive a priori error estimates for the semidiscrete problem in several norms. Section~\ref{sec:FE-examples} presents specific families of finite element spaces satisfying the assumptions of the abstract framework, derives the corresponding convergence rates, and discusses the superconvergent displacement post-processing. Finally, Section~\ref{sec:results} presents numerical experiments in two and three dimensions, including convergence tests, viscoelastic benchmarks, and simulations of calcium wavetrains in epithelial tissue.

\medskip
\noindent 
\textbf{Preliminary notations.} 
Let us consider a simply connected bounded and Lipschitz domain $\Omega\subset\mathbb{R}^d$, $d \in \{2,3\}$ occupied by a viscoelastic body. 
The domain boundary $\partial\Omega$ is partitioned into disjoint sub-boundaries where homogeneous displacement (and velocity) and traction-type boundary conditions are imposed  $\partial\Omega:= \overline{\Gamma^{\bu}} \cup \overline{\Gamma^{\bsigma}}$, and it is assumed for the sake of simplicity that both sub-boundaries are non-empty $|\Gamma^{\bu}|\cdot|\Gamma^{\bsigma}|>0$. Throughout the text, given a normed space $S$, by $\bS$ and $\bbS$ we will denote the vector and tensor extensions $S^d$ and $S^{d\times d}$, respectively. We introduce the Hilbert spaces $\bH(\vdiv;\Omega)=\left\{\bw \in \bL^2(\Omega): \vdiv(\bw) \in \LL^2(\Omega) \right\}$ and $\bH_\star(\vdiv;\Omega) := \{\bw\in \bH(\vdiv;\Omega): \bw \cdot \bn = 0 \text{ on } \Gamma^{\bu}\}$ endowed with the norm $\norm{\bw}_{\bH(\vdiv;\Omega)}^2:=\norm{\bw}^2_{\bL^2(\Omega)}+\norm{\vdiv(\bw)}^2_{\LL^2(\Omega)}$.
Next, we recall the definition of the tensorial Hilbert space
$\bbH(\bdiv;\Omega)=\left\{\btau \in \bbL^2(\Omega): \bdiv(\btau) \in \bL^2(\Omega) \right\}$ with its usual norm
$\norm{\btau}_{\bbH(\bdiv;\Omega)}^2:=\norm{\btau}^2_{\bbL^2(\Omega)}+\norm{\bdiv(\btau)}^2_{\bL^2(\Omega)}$,  
where the divergence acts on the rows of $\btau$.  
We also define the tensor spaces 
\begin{gather*}
    \bbH_\star(\bdiv;\Omega) := \big\{\btau\in \bbH(\bdiv;\Omega): \btau\bn = \cero \text{ on } \Gamma^{\bsigma}\big\}, 
    \\
    \bbL^2_{\mathrm{skew}}(\Omega) := \{\eeta \in \bbL^2(\Omega): \eeta = - \eeta^\top\} , \qan \bbL^2_{\mathrm{sym}}(\Omega) := \{\eeta \in \bbL^2(\Omega): \eeta = \eeta^\top\} \, .
\end{gather*}
We next introduce the notation for function spaces defined on the time interval $[0,t_{\rm end}]$ and taking values in a separable Hilbert space $H$ with norm $\|\bullet\|_H$. For a nonnegative integer $m$, and for $1\leq p< \infty$, we denote by $\LL^{p}(0,t_{\rm end};H)$ and $\rW^{m,p}(0,t_{\rm end};H)$ the spaces of (equivalence classes of) Bochner measurable functions $f:[0,t_{\mathrm{end}}]\rightarrow H$ such that
\begin{gather*}
\norm{f}_{\LL^{p}(0,t_{\rm end};H)}^p:=\int^{t_{\mathrm{end}}}_{0}\norm{f(t)}_{H}^p \dt < \infty \quad \text{if }p<\infty \, , \quad \norm{f}_{\LL^{\infty}(0,t_{\rm end};H)} := \esssup_{0\leq t \leq t_{\rm end}} \|f(t)\|_{H}<\infty \, , \\
\norm{f}_{\rW^{m,p}(0,t_{\rm end};H)}^p:=\sum_{l=0}^{m}\big\|\partial_t^l f\big\|_{\LL^p(0,t_{\rm end};H)}^p < \infty \quad \text{if }p<\infty \, , \\
\qan \norm{f}_{\rW^{m,\infty}(0,t_{\rm end};H)} := 
\sum_{l=0}^m \|\partial_t^l f\|_{\LL^\infty(0,t_{\rm end};H)}\, ,
\end{gather*}
where $\partial_t^l$ denotes the $l$-th (weak) time derivative of $f$. When $p=2$, we write $\rW^{m,2}(0,t_{\rm end};H) = \HH^m(0,t_{\rm end};H)$. We also use the space $C^m([0,t_{\mathrm{end}}],H)$ of $m$-times continuously differentiable functions. Furthermore, for brevity, we write $\dot{f}$ to denote $\partial_t f$.

\section{Governing equations}\label{sec:model}

Consider the equations of motion of a viscoelastic body for which the momentum balance reads
\begin{equation}\label{eq:momentum-balance}
\varrho\, \ddot{\bu} - \bdiv \bsigma = \varrho \, \bb \qin  \Omega\times (0,t_{\mathrm{end}}] \, ,
\end{equation}
with $\varrho > 0$ denoting the material density and $\bb$ representing the body force per unit mass. Regarding the constitutive relation for the stress, we assume that calcium localisation, represented by the concentration $\varphi$, affects the deformation of the viscoelastic structure. This effect is incorporated through an active stress contribution within a Kelvin--Voigt type model, in which an elastic spring and a viscous dashpot act in parallel, leading to
\begin{equation*}
\bsigma =\bsigma_1 + \bsigma_2  \, , \quad \text{with}\quad \bsigma_1:= \cC_1\,\beps(\bu) \qan \bsigma_2:=\cC_2\, \beps(\bv) - \alpha (\varphi)\, \mathbb{I} \, ,
\end{equation*}
where $\bv := \dot{\bu}$ is the velocity field, $\cC_i$ are the fourth-order isotropic stiffness tensors characterised by their actions
\begin{equation*}
\cC_i\,\btau = 2\mu_i\,\btau + \lambda_i \tr(\btau)\, \mathbb{I}\, ,\qquad \forall\, i \in \{1,2\} \, ,
\end{equation*}
with Lam\'e parameters $\mu_i,\lambda_i>0$. The function $\alpha$ is a Hill-type function that modulates the activation process and accounts for the degree of cooperative binding at the microscale. It is defined as
\begin{equation}\label{eq:def-Hill-function}
\alpha(\varphi) = \alpha_0 \frac{\varphi^2}{1+\varphi^2}\, ,
\end{equation}
where $\alpha_0>0$ is the saturation constant of the model. Dependences of this type are typical in models for mechanobiology (see, for instance, \cite{jones12,murray2003mathematical}). Notice that $\alpha \in C^\infty(\mathbb{R})$ is bounded with $0\leq \alpha \leq \alpha_0$ in $\R$, and all its derivatives are bounded; in particular $\alpha$ is Lipschitz continuous, that is, there exists $L_\alpha > 0$ such that
\begin{equation}\label{eq:alpha-Lipschitz}
|\alpha(\varphi^1)-\alpha(\varphi^2)|
\leq L_\alpha \,  |\varphi^1-\varphi^2|
\qquad \forall \, \varphi^1,\varphi^2 \in \mathbb{R} \, .
\end{equation}
The material properties are characterised by the compliance tensors $\cA_i:=\cC_i^{-1}$, which are symmetric, bounded, and uniformly positive definite linear operators. Here $\cA_1$ corresponds to the elastic spring and $\cA_2$ to the viscous dashpot of the Kelvin--Voigt model, and their action is given by
\begin{equation*}
\cA_i\btau= \frac{1}{2\mu_i}\biggl(\btau - \frac{\lambda_i}{2\mu_i+d\lambda_i} \tr(\btau)\mathbb{I}\biggr) \quad \forall\, i \in \{1,2\} \, ,
\end{equation*}
so that the constitutive relations read
\begin{equation}\label{eq:Ais} 
\cA_1\bsigma_1 = \beps(\bu)  \qan 
\cA_2\bsigma_2 = \beps(\bv) - \frac{\alpha(\varphi)}{2\mu_2+d\lambda_2}\, \bbI \, ,
\end{equation}
the second one expressing that the dashpot stress is proportional to the strain rate. Differentiating the first relation above in time gives $\cA_1\dot{\bsigma}_1 = \beps(\bv)$.

Finally, in order to impose the balance of angular momentum in a weak manner, we introduce the spin tensor $\bomega = \frac12(\bnabla \bv-(\bnabla\bv)^\top )$. The viscoelasticity system then reads
\begin{equation}\label{eq:strong-viscoelast}
\begin{alignedat}{2}
\cA_1\dot{\bsigma}_1
&= \bnabla\bv-\bomega 
&\qquad &\text{in }\Omega\times (0,t_{\mathrm{end}}]\, ,\\
\cA_2\bsigma_2
&= \bnabla\bv-\bomega - \frac{\alpha(\varphi)}{2\mu_2+d\lambda_2}\, \bbI
&\qquad &\text{in }\Omega\times (0,t_{\mathrm{end}}]\, ,\\
\varrho\,\dot{\bv}
-\bdiv(\bsigma_1+\bsigma_2)
&= \varrho\,\bb 
&\qquad &\text{in }\Omega\times (0,t_{\mathrm{end}}]\, ,\\
\bsigma_1+\bsigma_2
&= (\bsigma_1+\bsigma_2)^\top
&\qquad &\text{in }\Omega\times (0,t_{\mathrm{end}}]\, ,
\end{alignedat}
\end{equation}
with mixed boundary conditions
\begin{equation}\label{eq:bc-mech}
\bv = \cero \qon \Gamma^\bu \times (0,t_{\rm end}) \qan (\bsigma_1+\bsigma_2)\bn = \cero \qon \Gamma^{\bsigma}\times (0,t_{\mathrm{end}}]\, ,
\end{equation}
and prescribed initial data $\bu(0)=\bu_0$ and $\bv(0) = \bv_0$.

In addition to the viscoelasticity equations, we consider the motion of the calcium concentration $\varphi(t):\Omega\to\mathbb{R}$ and of the amount of inactivated Inositol Triphosphate $\theta(t):\Omega\to\mathbb{R}$, whose dynamics are governed by
\begin{subequations}
\begin{alignat}{2}
\label{eq:calcium-dynamics-1}
\dot{\varphi} - \vdiv(\diff \nabla \varphi) &=  f(\varphi,\theta) + \beta\, \vdiv\,\bu &\qquad& \text{in $\Omega\times (0,t_{\mathrm{end}}]$},\\
\label{eq:calcium-dynamics-2}
\dot{\theta} &= g(\varphi,\theta) &\qquad& \text{in $\Omega\times (0,t_{\mathrm{end}}]$} \, ,
\end{alignat}
\end{subequations}
with mixed boundary conditions 
\begin{equation}\label{eq:bc-phi}
\varphi=0 \qon \Gamma^{\bsigma}\times (0,t_{\mathrm{end}}] \qan \diff \nabla \varphi \cdot \bn =0 \qon \Gamma^{\bu}\times (0,t_{\mathrm{end}}] \, ,
\end{equation}
and the initial conditions $\varphi(0)=\varphi_0$, $\theta(0)=\theta_0$. The diffusion coefficient $\diff : \Omega\to \mathbb{R}^{d\times d}$ is a symmetric, uniformly bounded, and uniformly positive-definite tensor satisfying
\begin{equation}\label{eq:properties-D}
(\diff\, \bv) \cdot \bv \geq \underline{D}\, |\bv|^2  \quad \forall \, \bv \in \mathbb{R}^d, \qan
\|\diff(\bx)\|\leq \overline{D} \quad \text{for a.e. }\bx \in \Omega \, ,
\end{equation}
where $0<\underline{D}\leq \overline{D}$ are known constants, and $\beta>0$ is a coupling parameter. For the forthcoming analysis we suppose that the reaction terms $f$ and $g$ are globally Lipschitz continuous in both arguments:
\begin{subequations}
\begin{equation}\label{eq:assumption-f}
|f(\varphi^1,\theta^1)- f(\varphi^2,\theta^2)| \leq L_{f1} \, |\varphi^1-\varphi^2| + L_{f2} \, |\theta^1-\theta^2| \qquad \forall\, \varphi^1,\varphi^2,\theta^1,\theta^2 \in  \mathbb{R}\, ,
\end{equation}
\begin{equation}\label{eq:assumption-g}
|g(\varphi^1,\theta^1)- g(\varphi^2,\theta^2)| \leq L_{g1} \, |\varphi^1-\varphi^2| + L_{g2}\, |\theta^1-\theta^2| \qquad \forall\, \varphi^1,\varphi^2,\theta^1,\theta^2 \in \mathbb{R}\, . 
\end{equation}
\end{subequations}
\begin{remark}
Some physically meaningful reaction terms are given by
\begin{equation}\label{eq:specific-reaction-functions}
f (\varphi,\theta) = \gamma_0\, \theta\, \frac{\gamma_1+\varphi}{1+\varphi}-\gamma_2 \, \frac{\varphi}{\gamma_3+\varphi} \qan g(\varphi,\theta) = \frac{1}{1+\varphi^2}-\theta \, ,
\end{equation}
where $\gamma_i>0$, $i\in \{0,1,2,3\}$, are model parameters \cite{kaouri19,kaouri22}. However, $f (\varphi,\theta)$ is not globally Lipschitz continuous. In particular, $f$ might exhibit singular behaviour depending on the values attained by $\varphi$. In practice, as illustrated by the numerical experiments in Section \ref{sec:results}, both $\varphi$ and $\theta$ remain uniformly bounded and are uniformly away from the singular values $-1$, $-\gamma_3$, and therefore $f$ and $g$ are Lipschitz continuous on the range of the corresponding solutions. Moreover, a rigorous analysis of this case could be carried out by introducing suitable regularisations of the functions in \eqref{eq:specific-reaction-functions} and subsequently passing to the limit. To avoid these additional technicalities, we restrict our analysis to reaction terms satisfying the assumptions stated above.
\end{remark}

Using trace properties and the first relation in \eqref{eq:Ais}, the volumetric contribution in the last term on the right-hand side of \eqref{eq:calcium-dynamics-1} can be rewritten as
\begin{equation*}
\beta \, \vdiv\, \bu = \beta \tr(\cA_1\bsigma_1) = \frac{\beta}{2\mu_1+d\lambda_1}\tr(\bsigma_1) \, .
\end{equation*}

The model adopted here for coupled viscoelasticity and calcium dynamics was studied (using a different formulation) in~\cite{kaouri22}. Here we also include the diffusive flux as an additional unknown,
\[\bz = \diff \, \nabla \varphi \qin  \Omega\times (0,t_{\mathrm{end}}] \, ,\]
so that $\nabla\alpha(\varphi) = \alpha'(\varphi)\nabla\varphi = \alpha'(\varphi)\,\diff^{-1}\bz$. Following the usual approach, in weak form our problem reads: find $(\bsigma_1,\bsigma_2,\bv,\bomega,\bz,\varphi,\theta):(0,t_{\mathrm{end}}] \to \bbL^2(\Omega)\times \bbL^2(\Omega) \times \bL^2(\Omega)\times \bbL^2_{\mathrm{skew}}(\Omega)\times \bH_\star(\vdiv;\Omega)\times \LL^2(\Omega)\times \LL^2(\Omega)$ such that $(\bsigma_1+\bsigma_2)(t)\in \bbH_\star(\bdiv;\Omega)$, $\bsigma_1(0)=\cC_1\, \beps(\bu_0)$, $\bv(0)=\bv_0$, $\varphi(0)=\varphi_0$, $\theta(0)=\theta_0$, and, for a.e. $t\in (0,t_{\rm end}]$,
\begin{equation}\label{eq:mixed-weak}
\begin{array}{rcl}
(\cA_1\dot{\bsigma}_1,\btau_1)_{\Omega} + (\bdiv\, \btau_1,\bv)_{\Omega} + (\btau_1,\bomega)_{\Omega} & = & 0 \, , \\[1.25ex]
(\cA_2\bsigma_2,\btau_2)_{\Omega} + (\bdiv\, \btau_2,\bv)_{\Omega} + (\btau_2,\bomega)_{\Omega}  & = & - \frac{1}{2\mu_2+d\lambda_2}\, (\alpha(\varphi),\tr(\btau_2))_\Omega \, ,\\[1.25ex]
(\varrho\, \dot{\bv},\bw)_{\Omega} -(\bdiv(\bsigma_1+\bsigma_2),\bw)_{\Omega} - (\bsigma_1+\bsigma_2,\eeta)_{\Omega} & = & (\varrho\, \bb,\bw)_{\Omega} \, , \\[1.25ex]
(\diff^{-1}\bz,\br)_{\Omega} +(\vdiv\,\br,\varphi)_{\Omega} & = & 0 \, ,\\[1.25ex]
(\dot{\varphi},\psi)_{\Omega} - (\vdiv\,\bz, \psi)_{\Omega}  & = & \left(f(\varphi,\theta),\psi \right)_{\Omega} + \tfrac{\beta}{2\mu_1+d\lambda_1}(\tr\bsigma_1,\psi)_{\Omega}\, ,\\[1.25ex]
(\dot{\theta},\vartheta)_{\Omega} & = & (g(\varphi,\theta),\vartheta)_{\Omega} \, ,
\end{array}
\end{equation}
for all $(\btau_1,\btau_2,\bw,\eeta,\br,\psi,\vartheta)\in \bbH_\star(\bdiv;\Omega)\times \bbH_\star(\bdiv;\Omega) \times \bL^2(\Omega)\times \bbL^2_{\mathrm{skew}}(\Omega)\times \bH_\star(\vdiv;\Omega)\times \LL^2(\Omega)\times \LL^2(\Omega)$.

\section{Continuous solvability}\label{sec:solvability}

In this section, we establish the existence and uniqueness of a solution to~\eqref{eq:mixed-weak} by introducing an auxiliary integro-differential system formulated in terms of the primal variables and subsequently reconstructing the mixed variables.

\subsection{The reduced integro-differential equations}\label{sec:integro-diff-primal}

Recalling that $\bv = \dot{\bu}$ and writing
\begin{equation}\label{eq:u-recovery}
\bu(t) = \bu_0 + \int_0^t \bv(s)\ds\, ,
\end{equation}
the symmetric gradient of the displacement satisfies $\beps(\bu(t)) = \beps(\bu_0) + \int_0^t \beps(\bv(s))\ds$, so that the elastic and viscous stress components read
\begin{equation}\label{eq:stress-recovery}
\bsigma_1(t) = \cC_1\, \beps(\bu_0) + \cC_1\, \int_0^t \beps(\bv)\ds \, , \qan \bsigma_2(t) = \cC_2\, \beps(\bv(t)) - \alpha(\varphi(t)) \mathbb{I} \, .
\end{equation}
Substituting~\eqref{eq:u-recovery}--\eqref{eq:stress-recovery} into the momentum balance~\eqref{eq:momentum-balance} and replacing~\eqref{eq:u-recovery} into the volumetric contribution in~\eqref{eq:calcium-dynamics-1} eliminates $\bu$ in favour of $\bv$, leading to the reduced problem: find $(\bv,\varphi,\theta)$ such that
\begin{subequations}\label{eq:reduced-strong}
\begin{align}
\varrho\, \dot{\bv} - \bdiv\left( \cC_2\,\beps(\bv) + \cC_1\!\int_0^t \beps(\bv)\ds \right) + \nabla\alpha(\varphi) &= \varrho\,\bb + \bdiv\big(\cC_1\beps(\bu_0)\big)\, , \\
\dot{\varphi} - \vdiv(\diff\nabla\varphi) &= f(\varphi,\theta) + \beta\left( \vdiv\,\bu_0 + \int_0^t \vdiv\, \bv\ds \right)\, , \\
\dot{\theta} &= g(\varphi,\theta)\, ,
\end{align}
\end{subequations}
in $\Omega\times(0,t_{\mathrm{end}}]$, together with the boundary conditions, compatible with \eqref{eq:bc-mech},\eqref{eq:bc-phi}
\[
\bv = \cero \quad\text{on }\Gamma^{\bu},\qquad
\Big(\cC_2\beps(\bv)+\cC_1\!\int_0^t\beps(\bv)\ds + \cC_1\beps(\bu_0) - \alpha(\varphi)\bbI\Big)\bn = \cero \quad\text{on }\Gamma^{\bsigma},
\]
\[
\varphi = 0 \quad\text{on }\Gamma^{\bsigma},\qquad \diff\nabla\varphi\cdot\bn = 0 \quad\text{on }\Gamma^{\bu},
\]
and the initial conditions $\bv(0)=\bv_0$, $\varphi(0)=\varphi_0$, $\theta(0)=\theta_0$.

\begin{remark}\label{rem:closed}
System \eqref{eq:reduced-strong} is closed in the unknowns $(\bv,\varphi,\theta)$: once it has been solved, the displacement is recovered by the quadrature \eqref{eq:u-recovery} and the stresses by \eqref{eq:stress-recovery}, the datum $\bu_0$ entering only as a forcing offset. For consistency with the strong traction condition we assume the compatibility relation $\big(\cC_1\beps(\bu_0)+\cC_2 \beps(\bv_0)-\alpha(\varphi_0)\bbI\big)\bn = \cero$ on $\Gamma^{\bsigma}$; it is not required by the weak formulation below, where the traction condition is natural.
\end{remark}

In order to introduce a weak formulation for~\eqref{eq:reduced-strong}, we define the spaces
\[
\bV := \big\{ \bw\in \bH^1(\Omega)~: ~\bw = \cero \qon \Gamma^{\bu}\big\}\, , \qquad
\rQ := \big\{ \psi\in \HH^1(\Omega)~:~ \psi = 0 \qon \Gamma^{\bsigma} \big\} \, ,
\]
endowed with the norms $\|\bullet\|_{\bV} := \|\bullet\|_{\bH^1(\Omega)}$ and $\|\bullet\|_{\rQ}:=\|\bullet\|_{\HH^1(\Omega)}$, respectively. Furthermore, for $i\in \{1,2\}$, let $\wt a_i:\bV\times\bV\to\R$ be the symmetric bounded bilinear form given by
\begin{equation*}
\wt a_i(\bz,\bw) := (\cC_i\,\beps(\bz),\beps(\bw))_\Omega \, .
\end{equation*}
Since $|\Gamma^{\bu}|>0$, Korn's inequality and the uniform positive definiteness of $\cC_i$ yield the coercivity
\begin{equation}\label{eq:coercivity-a}
\wt a_i(\bw,\bw) \geq \kappa_i \norm{\bw}_{\bV}^2 \qquad \forall\,\bw\in\bV,\qquad \forall\, i\in\{1,2\} \, ,
\end{equation}
with $\kappa_i>0$ depending only on $\mu_i$, $\Omega$ and $\Gamma^{\bu}$. Likewise, by~\eqref{eq:properties-D} and the Poincar\'e inequality, we also have
\begin{equation}\label{eq:coercivity-D}
(\diff\nabla\psi,\nabla\psi)_\Omega \geq \underline{D}\,\norm{\nabla\psi}_{0,\Omega}^2 \geq  \kappa_D\,\norm{\psi}_{\rQ}^2  \qquad \forall\, \psi \in \rQ \, ,
\end{equation}
where $\kappa_D>0$ depends only on $\underline{D}$, $\Omega$ and $\Gamma^\bsigma$.

Now, testing \eqref{eq:reduced-strong} against $(\bw,\psi,\vartheta)\in\bV\times\rQ\times\LL^2(\Omega)$ and integrating by parts (the boundary contributions vanishing by virtue of the boundary conditions), we arrive at the weak formulation: find $\bv\in\LL^2(0,t_{\mathrm{end}};\bV)$, $\varphi\in\LL^2(0,t_{\mathrm{end}};\rQ)$, $\theta\in \HH^1(0,t_{\mathrm{end}};\LL^2(\Omega))$, with $\dot{\bv}\in\LL^2(0,t_{\mathrm{end}};\bV')$ and $\dot{\varphi}\in\LL^2(0,t_{\mathrm{end}};\rQ')$, such that $\bv(0)=\bv_0$, $\varphi(0)=\varphi_0$, $\theta(0)=\theta_0$, and for a.e. $t\in(0,t_{\mathrm{end}}]$,
\begin{subequations}\label{eq:reduced-weak}
\begin{align}
\varrho\,\langle\dot{\bv},\bw\rangle + \wt a_2(\bv,\bw) + \int_0^t \wt a_1(\bv(s),\bw)\ds &= (\varrho\,\bb,\bw)_\Omega - \wt a_1(\bu_0,\bw) + (\alpha(\varphi),\vdiv\,\bw)_\Omega\, , \label{eq:rw-v}\\
\langle\dot{\varphi},\psi\rangle + (\diff\nabla\varphi,\nabla\psi)_\Omega - \beta\, \left( \int_0^t\vdiv\,\bv\ds,\psi\right)_\Omega &= (f(\varphi,\theta),\psi)_\Omega + \beta(\vdiv\,\bu_0,\psi)_\Omega \, , \label{eq:rw-phi}\\
(\dot{\theta},\vartheta)_\Omega &= (g(\varphi,\theta),\vartheta)_\Omega \, , \label{eq:rw-theta}
\end{align}
\end{subequations}
for all $(\bw,\psi,\vartheta)\in\bV\times\rQ\times\LL^2(\Omega)$.

Here, and throughout this work, we assume the following regularity on the data.

\begin{assumption}\label{assum:data-regularity}
The initial data and the body force load satisfy
\begin{gather*}
\bu_0 \in \bH^1(\Omega) \, , \quad
\bv_0 \in \bL^2(\Omega) \, , \quad
\varphi_0 \in \LL^2(\Omega) \, , \quad
\theta_0 \in \LL^2(\Omega) \qan
\bb \in \LL^2(0,t_{\rm end};\bL^2(\Omega))\, . 
\end{gather*}
\end{assumption}

\begin{remark}\label{rem:lions-magenes}
Since $\bV \subset \bL^2(\Omega) \subset \bV'$ is a Gelfand triple, the Lions--Magenes lemma yields $\bv \in C([0,t_{\rm end}];\bL^2(\Omega))$ whenever $\bv \in \LL^2(0,t_{\rm end};\bV)$ and $\dot{\bv} \in \LL^2(0,t_{\rm end};\bV')$. Therefore, the initial condition $\bv(0)=\bv_0$ is meaningful. The same reasoning applies to $\varphi$, ensuring that $\varphi(0)=\varphi_0$ is well defined as well.
\end{remark}

\subsection{Well-posedness of the reduced problem}\label{sec:well-posedness-reduced-problem}

We first prove the solvability of the uncoupled linear viscoelastic subproblem (cf.~\eqref{eq:rw-v}) with a prescribed load.

\begin{lemma}\label{lem:primal-viscoelasticity-well-posedness}
There exists $T\in(0,t_{\rm end}]$, depending only on $\kappa_2$, $\varrho$, $\mu_1$ and $\lambda_1$, such that for every $\sF\in\LL^2(0,T;\bV')$ there is a unique $\bv\in\LL^2(0,T;\bV)$ with $\dot{\bv}\in\LL^2(0,T;\bV')$ satisfying $\bv(0)=\bv_0$ and
\begin{equation}\label{eq:primal-viscoelasticity-well-posedness}
\varrho\,\langle\dot{\bv},\bw\rangle + \wt a_2(\bv,\bw) + \int_0^t \wt a_1(\bv(s),\bw)\ds = \langle\sF(t),\bw\rangle \qquad\forall\,\bw\in\bV \, ,\ \text{a.e. }t\in(0,T] \, .
\end{equation}
\end{lemma}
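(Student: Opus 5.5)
We plan to prove the lemma with a Banach fixed-point argument that treats the memory term as a known load. Fix $T\in(0,t_{\rm end}]$, to be chosen later. Introduce the space $X_T:=\{\bv\in\LL^2(0,T;\bV):\ \dot{\bv}\in\LL^2(0,T;\bV')\}$, although for the contraction it suffices to work in $\LL^2(0,T;\bV)$. Given $\bz\in\LL^2(0,T;\bV)$, define $\cT\bz:=\bv$ as the solution of the \emph{parabolic} problem
\[
\varrho\,\langle\dot{\bv},\bw\rangle + \wt a_2(\bv,\bw) = \langle\sF(t),\bw\rangle - \int_0^t \wt a_1(\bz(s),\bw)\ds \qquad \forall\,\bw\in\bV,\quad \bv(0)=\bv_0 .
\]
The right-hand side $\sG_\bz(t):=\sF(t)-\int_0^t\wt a_1(\bz(s),\cdot)\ds$ lies in $\LL^2(0,T;\bV')$. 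Indeed, $\|\int_0^t \wt a_1(\bz,\cdot)\ds\|_{\bV'}\le C_1\int_0^t\|\bz\|_{\bV}\ds\le C_1\sqrt{t}\,\|\bz\|_{\LL^2(0,t;\bV)}$, where $C_1$ depends only on $\mu_1,\lambda_1$ through the boundedness of $\cC_1$. The bilinear form $\wt a_2$ is bounded and, by \eqref{eq:coercivity-a}, coercive on $\bV$, and $\bV\subset\bL^2(\Omega)\subset\bV'$ is a Gelfand triple (Remark~\ref{rem:lions-magenes}). The classical Lions theorem for abstract parabolic equations with constant-in-time coefficients therefore gives a unique $\bv\in X_T$ with $\bv(0)=\bv_0$, so $\cT$ is well defined.

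For the contraction step, take $\bz^1,\bz^2$ and set $\delta\bv:=\cT\bz^1-\cT\bz^2$ and $\delta\bz:=\bz^1-\bz^2$. Then $\delta\bv(0)=\cero$, and we test with $\bw=\delta\bv(t)$, which is legitimate by the Lions--Magenes identity $\langle\dot{\delta\bv},\delta\bv\rangle=\frac12\frac{\rd}{\dt}\|\delta\bv\|^2_{0,\Omega}$. Using coercivity, Young's inequality and the bound above gives
\[
\tfrac{\varrho}{2}\tfrac{\rd}{\dt}\|\delta\bv\|_{0,\Omega}^2+\tfrac{\kappa_2}{2}\|\delta\bv\|_{\bV}^2\le \tfrac{C_1^2}{2\kappa_2}\, t\,\|\delta\bz\|^2_{\LL^2(0,t;\bV)} .
\]
Integrating over $(0,T)$ yields $\|\delta\bv\|^2_{\LL^2(0,T;\bV)}\le \frac{C_1^2T^2}{2\kappa_2^2}\|\delta\bz\|^2_{\LL^2(0,T;\bV)}$. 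We then choose $T$ so that $C_1T/(\sqrt2\,\kappa_2)<1$. This makes $\cT$ a contraction, and the choice depends only on $\kappa_2$ and $\mu_1,\lambda_1$; it is also harmless to allow dependence on $\varrho$, as in the statement. Banach's theorem then provides a unique fixed point $\bv\in\LL^2(0,T;\bV)$. Its time derivative lies in $\LL^2(0,T;\bV')$ because it solves the parabolic problem with a load in $\LL^2(0,T;\bV')$, and the fixed point satisfies \eqref{eq:primal-viscoelasticity-well-posedness}.

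Uniqueness in the full class follows directly: any solution in $X_T$ is a fixed point of $\cT$, and the contraction property makes the fixed point unique. Alternatively, one can rerun the energy estimate with $\delta\bz=\delta\bv$ and apply Gronwall's inequality.

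The main obstacle is to make sure that $T$ depends only on the structural constants and not on $\sF$ or $\bv_0$. This holds because the map is affine: the difference estimate involves neither the data nor the initial condition. A second point to check is that the memory term is handled in $\LL^2(0,T;\bV)$ rather than in $C([0,T];\bL^2(\Omega))$, since the term $\wt a_1$ requires $\bV$-norm control. The coercive dissipation from $\wt a_2$ provides exactly this control, and this is why the $\LL^2(0,T;\bV)$ norm is the right one for the contraction.
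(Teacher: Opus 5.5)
Your proposal is correct and follows essentially the same route as the paper. Both arguments freeze the memory term as a load, solve the coercive parabolic problem, and test the difference equation with $\delta\bv$ (Cauchy--Schwarz in time plus Young) to obtain a contraction on $\LL^2(0,T;\bV)$ with factor of order $T$, then apply Banach's theorem. Your added remark that every solution is a fixed point of the map makes the uniqueness step slightly more explicit than the paper's version.
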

\begin{proof}
We proceed by a fixed-point argument. Let $\sS:\LL^2(0,T;\bV) \to \LL^2(0,T;\bV)$ be the operator such that, for each $\wh \bv \in \LL^2(0,T;\bV)$, $\sS(\wh \bv) := \bv$, where $\bv$ is the unique solution (to be confirmed below) to
\begin{equation}\label{eq:primal-viscoelasticity-well-posedness-proof-1}
\varrho\,\langle\dot{\bv},\bw\rangle + \wt a_2(\bv,\bw) = \langle \cG[\wh \bv](t),\bw\rangle \qquad\forall\,\bw\in\bV \, ,\ \text{a.e. }t\in(0,T] \, ,
\end{equation}
and $\bv(0)=\bv_0$, where $\langle \cG[\wh \bv](t),\bw \rangle := \langle\sF(t),\bw\rangle  - \int_0^t \wt a_1(\wh \bv(s),\bw)\ds$. It is straightforward to verify that $\cG[\wh \bv] \in \LL^2(0,T;\bV')$, since $\wh \bv \in \LL^2(0,T;\bV)$ and $\sF \in \LL^2(0,T;\bV')$. Combined with the coercivity of $\wt a_2$ (cf.~\eqref{eq:coercivity-a}), this ensures that~\eqref{eq:primal-viscoelasticity-well-posedness-proof-1} admits a unique solution, as it corresponds to the standard weak formulation of a linear second-order parabolic problem.  Moreover, the solution $\bv$ belongs to $\LL^2(0,T;\bV)$ with $\dot{\bv}\in \LL^2(0,T;\bV')$. Hence, $\sS$ is indeed well-defined. 

Next, given $\wh \bv_1, \wh \bv_2\in \LL^2(0,T;\bV)$, we let $\bv_i := \sS(\wh \bv_i)$, $i\in \{1,2\}$. Since each $\bv_i$ is the solution to~\eqref{eq:primal-viscoelasticity-well-posedness-proof-1} with $\cG[\wh \bv_i]$ as right-hand side, we have that the difference $\delta \bv := \bv_1-\bv_2$ satisfies
\begin{equation}\label{eq:primal-viscoelasticity-well-posedness-proof-2}
\varrho\,\langle\delta \dot{\bv},\bw\rangle + \wt a_2(\delta\bv,\bw) = \langle \cG[\delta\wh \bv](t),\bw\rangle \qquad\forall\,\bw\in\bV \, ,\ \text{a.e. }t\in(0,T] \, ,
\end{equation}
and $\delta \bv(0) = \cero$, where $\delta\wh \bv := \wh \bv_1- \wh \bv_2$. By testing~\eqref{eq:primal-viscoelasticity-well-posedness-proof-2} against $\delta \bv(t)$ and using the coercivity of $\wt a_2$ (cf.~\eqref{eq:coercivity-a}), we find that
\begin{equation}\label{eq:primal-viscoelasticity-well-posedness-proof-3}
\frac{\varrho}{2} \, \partial_t \|\delta \bv(t)\|_{\bL^2(\Omega)}^2 + \kappa_2\, \|\delta \bv(t)\|_{\bV}^2 \leq - \int_0^t \wt a_1( \delta \wh \bv(s),\delta \bv(t)) \ds \, .
\end{equation}
The right-hand side is estimated by exploiting the definition of $\wt a_1$ and applying the Cauchy--Schwarz and Young inequalities. In fact,
\begin{gather*}
\left| \int_0^t \wt a_1( \delta \wh \bv(s),\delta \bv(t)) \ds \right| \leq \|\cC_1\|\,\|\delta \bv(t)\|_{\bV}\, \int_0^t  \|\delta \wh \bv(s)\|_{\bV} \ds \leq  \sqrt{t}\, \|\cC_1\|\,\|\delta \bv(t)\|_{\bV}\, \left\{ \int_0^t \|\delta \wh \bv(s)\|_{\bV}^2\ds \right\}^{1/2} \\
\leq \fracd{\kappa_2}{2}\, \|\delta \bv(t)\|_{\bV}^2 + \fracd{\|\cC_1\|^2\, t}{2\kappa_2}\, \int_0^t \|\delta \wh \bv(s)\|_{\bV}^2\ds\,.
\end{gather*}
Substituting this estimate into~\eqref{eq:primal-viscoelasticity-well-posedness-proof-3}, integrating from $0$ to some $t\in (0,T]$, using that $\delta \bv(0)=\cero$, and applying the definition of the Bochner norms, we arrive at
\begin{equation*}
\|\delta \bv\|_{\LL^\infty(0,T;\bL^2(\Omega))}^2 + \|\delta \bv\|_{\LL^2(0,T;\bV)}^2 \leq C\, T^2 \, \|\delta \wh \bv\|_{\LL^2(0,T;\bV)}^2\, ,
\end{equation*} 
with $C>0$ depending only on $\kappa_2$, $\varrho$, and $\|\cC_1\|$. In particular, this estimate gives us that
\begin{equation*}
\|\sS(\wh \bv_1)-\sS(\wh \bv_2)\|_{\LL^2(0,T;\bV)} \leq \sqrt{C}\, T\, \|\wh \bv_1-\wh \bv_2\|_{\LL^2(0,T;\bV)}\, ,
\end{equation*}
that is, $\sS$ is Lipschitz continuous. Moreover, choosing $T= \min\{1/(2\sqrt{C}),t_{\rm end}\}$, we deduce that $\sS$ is a contractive map in the Banach space $\LL^2(0,T;\bV)$. Consequently, by the Banach fixed-point theorem, we conclude that $\sS$ has a unique fixed point, and, hence,~\eqref{eq:primal-viscoelasticity-well-posedness} has a unique solution. This concludes the proof.
\end{proof}

The following result establishes a Lipschitz continuity property of the solution to~\eqref{eq:primal-viscoelasticity-well-posedness} with respect to the prescribed load functionals.

\begin{lemma}\label{lem:difference-of-velocities}
Let $\sF_1, \sF_2\in \LL^2(0,T;\bV')$ with $T$ given by Lemma~\ref{lem:primal-viscoelasticity-well-posedness}. For $i\in \{1,2\}$, let $\bv_i$ be the unique solution to~\eqref{eq:primal-viscoelasticity-well-posedness} with the respective functional $\sF_i$. Then, there exist $0<T'\leq T$ and a positive constant $C$, depending only on $\varrho$, $\kappa_2$, $\mu_1$, $\lambda_1$, but independent of the initial data, such that
\begin{subequations}
\begin{align}\label{eq:difference-of-velocities}
\|\bv_1-\bv_2\|_{\LL^\infty(0,T';\bL^2(\Omega))}^2 + \|\bv_1-\bv_2\|_{\LL^2(0,T';\bV)}^2 &\leq C\, \|\sF_1-\sF_2\|_{\LL^2(0,T';\bV')}^2 \, ,
\quad \text{and} \\
\label{eq:difference-of-displacements}
\|\bu_1-\bu_2\|_{\LL^2(0,T';\bH^1(\Omega))}^2 &\leq C\, (T')^2\, \|\sF_1-\sF_2\|_{\LL^2(0,T';\bV')}^2 \, ,
\end{align}\end{subequations}
where $\bu_i(t) := \bu_0 + \int_0^t \bv_i(s)\ds$.
\end{lemma}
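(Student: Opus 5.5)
We will work with the difference $\delta\bv := \bv_1-\bv_2$ and run an energy argument close to the contraction estimate in the proof of Lemma~\ref{lem:primal-viscoelasticity-well-posedness}. Both $\bv_i$ start from the same initial datum $\bv_0$, so $\delta\bv(0)=\cero$. By linearity of \eqref{eq:primal-viscoelasticity-well-posedness}, $\delta\bv$ satisfies
\[
\varrho\,\langle\delta\dot{\bv},\bw\rangle + \wt a_2(\delta\bv,\bw) + \int_0^t \wt a_1(\delta\bv(s),\bw)\ds = \langle\delta\sF(t),\bw\rangle \qquad \forall\,\bw\in\bV,
\]
where $\delta\sF:=\sF_1-\sF_2$. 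We test with $\bw=\delta\bv(t)$; this is legitimate by the Lions--Magenes identity in Remark~\ref{rem:lions-magenes}. Using the coercivity \eqref{eq:coercivity-a} then gives
\[
\tfrac{\varrho}{2}\partial_t\|\delta\bv(t)\|_{\bL^2(\Omega)}^2 + \kappa_2\|\delta\bv(t)\|_{\bV}^2 \le \|\delta\sF(t)\|_{\bV'}\|\delta\bv(t)\|_{\bV} + \Big|\int_0^t \wt a_1(\delta\bv(s),\delta\bv(t))\ds\Big| .
\]

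We bound the two terms on the right separately.

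\emph{Load term.} Young's inequality gives $\frac{\kappa_2}{4}\|\delta\bv(t)\|_{\bV}^2 + \frac{1}{\kappa_2}\|\delta\sF(t)\|_{\bV'}^2$.

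\emph{Memory term.} We proceed exactly as in Lemma~\ref{lem:primal-viscoelasticity-well-posedness}, using Cauchy--Schwarz in time and then Young's inequality. This gives $\frac{\kappa_2}{4}\|\delta\bv(t)\|_{\bV}^2 + \frac{\|\cC_1\|^2 t}{\kappa_2}\int_0^t\|\delta\bv(s)\|_{\bV}^2\ds$.

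We absorb both $\frac{\kappa_2}{4}\|\delta\bv(t)\|_{\bV}^2$ contributions into the left-hand side and integrate over $(0,t)$ with $t\le T'$. The double integral is at most $\frac{\|\cC_1\|^2 (T')^2}{\kappa_2}\|\delta\bv\|^2_{\LL^2(0,T';\bV)}$. Taking the supremum in $t$ yields
\[
\tfrac{\varrho}{2}\|\delta\bv\|_{\LL^\infty(0,T';\bL^2)}^2 + \tfrac{\kappa_2}{2}\|\delta\bv\|_{\LL^2(0,T';\bV)}^2 \le \tfrac{1}{\kappa_2}\|\delta\sF\|^2_{\LL^2(0,T';\bV')} + \tfrac{\|\cC_1\|^2 (T')^2}{\kappa_2}\|\delta\bv\|^2_{\LL^2(0,T';\bV)} .
\]
We now choose $T'\le T$ so small that $\|\cC_1\|^2(T')^2/\kappa_2\le\kappa_2/4$. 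The last term is then absorbed into the left-hand side, and \eqref{eq:difference-of-velocities} follows with $C$ depending only on $\varrho$, $\kappa_2$ and $\|\cC_1\|$, hence on $\mu_1,\lambda_1$. The initial data play no role because $\delta\bv(0)=\cero$. Alternatively, Gronwall's inequality could be used instead of the smallness of $T'$, but the absorption route keeps the constant clean and matches the form of the statement, which allows $T'<T$.

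For \eqref{eq:difference-of-displacements}, the common $\bu_0$ cancels, so $\bu_1(t)-\bu_2(t)=\int_0^t\delta\bv(s)\ds$. Cauchy--Schwarz gives $\|\bu_1(t)-\bu_2(t)\|_{\bH^1(\Omega)}^2\le t\int_0^{T'}\|\delta\bv\|_{\bV}^2\ds$. Integrating over $(0,T')$ then gives a factor $(T')^2/2$ in front of $\|\delta\bv\|_{\LL^2(0,T';\bV)}^2$, and \eqref{eq:difference-of-velocities} closes the estimate.

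The only delicate step is the memory term, since it couples the present time with the whole history. The remedy is the same small-time absorption already used in Lemma~\ref{lem:primal-viscoelasticity-well-posedness}. This is why the statement allows a possibly smaller $T'$ that still depends only on the material constants.
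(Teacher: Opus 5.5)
Your proposal is correct and follows essentially the same route as the paper: test the difference equation with $\delta\bv$, control the memory term by Cauchy--Schwarz in time, absorb it by taking $T'$ small, and then obtain the displacement estimate from $\delta\bu(t)=\int_0^t\delta\bv\ds$ (the paper writes the memory term as $\wt a_1(\delta\bu(t),\delta\bv(t))$ and uses $\|\delta\bu\|_{\LL^2(0,t;\bH^1(\Omega))}^2\leq t^2\|\delta\bv\|_{\LL^2(0,t;\bV)}^2$, which amounts to the same bound). The only slip is a factor of $2$ dropped when passing from the pointwise-in-$t$ inequality to the sum of the $\LL^\infty$ and $\LL^2$ norms, and it affects only the constants.
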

\begin{proof}
Let $\delta \bv := \bv_1-\bv_2$. By subtracting equation~\eqref{eq:primal-viscoelasticity-well-posedness} with each prescribed load and testing against $\bw = \delta \bv$, we readily obtain that
\begin{equation}\label{eq:difference-of-velocities-proof-1}
\frac{\varrho}{2}\, \partial_t \|\delta \bv(t)\|_{\bL^2(\Omega)}^2 + \wt a_2(\delta \bv(t),\delta \bv(t)) = \langle \sF_1(t)-\sF_2(t), \delta \bv \rangle - \int_0^t \wt a_1(\delta \bv(s),\delta \bv(t))\ds\, .
\end{equation}
Since $\bu_i = \bu_0 + \int_0^t \bv_i(s)\ds$, it follows that $\delta \bu := \bu_1-\bu_2 = \int_0^t \delta \bv(s)\ds$. Hence,
\begin{equation*}
\left| \int_0^t \wt a_1(\delta \bv(s),\delta \bv(t)) \ds \right| = \left| \wt a_1(\delta \bu(t),\delta \bv(t)) \right| \leq \|\cC_1\|\, \|\delta \bu\|_{\bH^1(\Omega)}\, \|\delta \bv\|_{\bV}\, .
\end{equation*}
Substituting this estimate into~\eqref{eq:difference-of-velocities-proof-1}, and using the coercivity of $\wt a_2$ (cf.~\eqref{eq:coercivity-a}) together with Young's inequality to bound the right-hand side, we obtain
\begin{equation*}
\begin{gathered}
\frac{\varrho}{2}\, \partial_t \|\delta \bv\|_{\bL^2(\Omega)}^2 + \kappa_2\, \|\delta \bv\|_{\bV}^2 \leq \|\sF_1-\sF_2\|_{\bV'}\, \|\delta \bv\|_{\bV} + \|\cC_1\|\, \|\delta \bu\|_{\bH^1(\Omega)}\, \|\delta \bv\|_{\bV} \\
\leq \fracd{1}{2\kappa_2}\, \|\sF_1-\sF_2\|_{\bV'}^2 + \fracd{\kappa_2}{2}\, \|\delta \bv\|_{\bV}^2 + \frac{\|\cC_1\|^2}{\kappa_2}\, \|\delta \bu\|_{\bH^1(\Omega)}^2 + \frac{\kappa_2}{4} \|\delta \bv\|_{\bV}^2  \, ,
\end{gathered}
\end{equation*}
which becomes
\begin{equation}\label{eq:difference-of-velocities-proof-2}
\begin{gathered}
\frac{\varrho}{2}\, \partial_t \|\delta \bv\|_{\bL^2(\Omega)}^2 + \frac{\kappa_2}{4}\, \|\delta \bv\|_{\bV}^2 \leq \fracd{1}{2\kappa_2}\, \|\sF_1-\sF_2\|_{\bV'}^2 + \frac{\|\cC_1\|^2}{\kappa_2}\, \|\delta \bu\|_{\bH^1(\Omega)}^2  \, .
\end{gathered}
\end{equation}
We now integrate from $0$ to $t$ in~\eqref{eq:difference-of-velocities-proof-2} using that $\delta \bv(0)=\cero$, and use the definition of the Bochner norms to get
\begin{equation}\label{eq:difference-of-velocities-proof-3}
\|\delta \bv\|_{\LL^\infty(0,t;\bL^2(\Omega))}^2 + \|\delta \bv\|_{\LL^2(0,t;\bV)}^2 \leq C\, \left(\|\sF_1-\sF_2\|_{\LL^2(0,t;\bV')}^2 + \|\delta \bu\|_{\LL^2(0,t;\bH^1(\Omega))}^2 \right)\, ,
\end{equation}
with $C>0$ depending only on $\varrho$, $\kappa_2$ and $\|\cC_1\|$. Then, by definition of $\delta \bu$, we readily see that $\|\delta \bu\|_{\LL^2(0,t;\bH^1(\Omega))}^2\leq t^2\, \|\delta \bv\|_{\LL^2(0,t;\bV)}^2$, so by restricting ourselves to $0<t\leq T'$, with $T'\leq T$ such that $C\, (T')^2 \leq \frac{1}{2}$, we find that~\eqref{eq:difference-of-velocities-proof-3} implies
\begin{equation*}
\|\delta \bv\|_{\LL^\infty(0,T';\bL^2(\Omega))}^2 + \frac{1}{2}\, \|\delta \bv\|_{\LL^2(0,T';\bV)}^2 \leq C\, \|\sF_1-\sF_2\|_{\LL^2(0,T';\bV')}^2 \, ,
\end{equation*}
thereby proving~\eqref{eq:difference-of-velocities}. Finally, using once again that $\|\delta \bu\|_{\LL^2(0,T';\bH^1(\Omega))}^2 \leq (T')^2\, \|\delta \bv\|_{\LL^2(0,T';\bV)}^2$ along with~\eqref{eq:difference-of-velocities}, it follows~\eqref{eq:difference-of-displacements}. This completes the proof.
\end{proof}

We next record a Gr\"onwall-type inequality, whose proof can be found in~\cite[Lemma 3.3]{cwy2026}.
\begin{lemma}\label{lem:improved-Gronwall}
Let $\chi,R,A,B:[0,t_{\rm end}]\to [0,+\infty)$ measurable functions such that $A\in \LL^\infty(0,t_{\rm end})$ and $B\chi \in \LL^1(0,t_{\rm end})$. Suppose that, for a.e. $t\in (0,t_{\rm end}]$,
\begin{equation*}
\chi^2(t) + R(t) \leq A(t) + 2\, \int_0^t B(s)\, \chi(s)\ds.
\end{equation*}
Then, for all $t\in (0,t_{\rm end}]$,
\begin{equation*}
\sqrt{\chi^2(t) + R(t)} \leq \sup_{0\leq s\leq t_{\rm end}} \sqrt{A(s)} + \int_0^t B(s)\ds.
\end{equation*}
\end{lemma}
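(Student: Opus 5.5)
Set $\Phi(t) := \int_0^t B(s)\chi(s)\ds$ and $M := \sup_{0\leq s\leq t_{\rm end}} \sqrt{A(s)}$, which is finite because $A\in\LL^\infty$. The plan is to derive a differential inequality for $\Phi$, integrate it to bound $\Phi$ by $M$ and $\int_0^t B$, and then return to the hypothesis.

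Since $B\chi\in\LL^1(0,t_{\rm end})$, the function $\Phi$ is absolutely continuous and nondecreasing, with $\dot\Phi = B\chi$ a.e. and $\Phi(0)=0$. Discarding $R\geq 0$ in the hypothesis gives
\begin{equation*}
\chi(t)\leq \sqrt{M^2+2\Phi(t)} \qquad \text{for a.e. } t .
\end{equation*}
Introduce $\Psi(t) := \sqrt{M^2+2\Phi(t)}$. Then
\begin{equation*}
\dot\Psi = \frac{\dot\Phi}{\Psi} = \frac{B\chi}{\Psi} \leq B \qquad \text{a.e.}
\end{equation*}

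The one delicate point is division by $\Psi$, which vanishes when $M=0$ and $\Phi\equiv 0$ near $0$. I would handle this by replacing $M$ with $M+\epsilon$. This keeps $\Psi\geq M+\epsilon>0$, so $\Psi$ is absolutely continuous, being the composition of a smooth function with an absolutely continuous function on a region bounded away from the singularity. Integrating then gives
\begin{equation*}
\Psi(t)\leq M+\epsilon+\int_0^t B(s)\ds .
\end{equation*}

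Returning to the hypothesis, now keeping $R$, we get
\begin{equation*}
\chi^2(t)+R(t)\leq A(t)+2\Phi(t)\leq (M+\epsilon)^2+2\Phi(t)=\Psi(t)^2
\end{equation*}
for a.e. $t$. Taking square roots, using the bound on $\Psi$, and letting $\epsilon\to0$ yields the claim for a.e. $t$.

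The statement reads ``for all $t$'', but it can only hold as $\chi,R$ are given, i.e.\ a.e. For every $t$ it holds if $\chi$ and $R$ are interpreted via their representatives, or if the hypothesis is assumed pointwise. This is the only step that needs care beyond the routine computation; the main obstacle remains justifying the chain rule for $\Psi$, which the $\epsilon$-shift resolves.
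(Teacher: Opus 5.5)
Your argument is correct and is the standard proof of this inequality. The paper gives no proof of its own and defers to \cite[Lemma 3.3]{cwy2026}; the $\epsilon$-shifted comparison $\frac{\mathrm{d}}{\mathrm{d}t}\sqrt{(M+\epsilon)^2+2\Phi}\leq B$ you use is the usual route for such results.

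Your caveat about ``for all $t$'' is also right. With the hypothesis only a.e., the conclusion can fail on a null set: take $A=B=R=0$ and $\chi=\mathbf{1}_{\{t_0\}}$. It therefore holds exactly where the hypothesis does, which is every $t$ in the paper's application, since there $t\mapsto\|\delta\theta(t)\|$ is continuous.

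One small imprecision: $A\in\LL^\infty$ does not make the pointwise supremum $M$ finite. This is harmless, because $M=\infty$ makes the claim trivial, and using the essential supremum instead even gives a slightly sharper a.e.\ bound.
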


We are now in a position to prove the existence and uniqueness of a solution to~\eqref{eq:reduced-weak} on a short time interval.

\begin{theorem}[Local well-posedness]\label{thm:local}
There exists $T_\star\in(0,t_{\mathrm{end}}]$, depending only on the coercivity constants (cf.~\eqref{eq:coercivity-a}--\eqref{eq:coercivity-D}), Lipschitz constants (cf.~\eqref{eq:alpha-Lipschitz},~\eqref{eq:assumption-f},~\eqref{eq:assumption-g}), $\lambda_1$, $\mu_1$, $\varrho$, $\beta$, and $d$, but independent of the initial data, such that problem~\eqref{eq:reduced-weak} admits a unique solution on $[0,T_\star]$.
\end{theorem}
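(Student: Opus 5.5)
We will prove Theorem~\ref{thm:local} by a Banach fixed-point argument on the concentrations. Set $X_T:=\LL^2(0,T;\LL^2(\Omega))\times\LL^2(0,T;\LL^2(\Omega))$ and define $\sR:X_T\to X_T$ through three successive solves. Given $(\wh\varphi,\wh\theta)\in X_T$, first let $\bv$ be the solution from Lemma~\ref{lem:primal-viscoelasticity-well-posedness} with load
\[
\langle\sF(t),\bw\rangle:=(\varrho\,\bb,\bw)_\Omega-\wt a_1(\bu_0,\bw)+(\alpha(\wh\varphi),\vdiv\,\bw)_\Omega .
\]
This load lies in $\LL^2(0,T;\bV')$ because $\alpha$ is bounded and $\bu_0\in\bH^1(\Omega)$. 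Set $\bu(t)=\bu_0+\int_0^t\bv\ds$. Next, let $\varphi\in\LL^2(0,T;\rQ)$, with $\dot\varphi\in\LL^2(0,T;\rQ')$, solve the linear parabolic problem \eqref{eq:rw-phi} with right-hand side $(f(\wh\varphi,\wh\theta),\psi)_\Omega+\beta(\vdiv\,\bu,\psi)_\Omega$. Since the $\beta$-terms in \eqref{eq:rw-phi} combine exactly to $\beta\,\vdiv\,\bu$, this right-hand side is in $\LL^2(0,T;\LL^2(\Omega))$. Existence and uniqueness follow from \eqref{eq:coercivity-D} by standard (J.-L.\ Lions) theory. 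Finally, let $\theta(t)=\theta_0+\int_0^t g(\varphi,\wh\theta)\ds$. A fixed point of $\sR(\wh\varphi,\wh\theta)=(\varphi,\theta)$ solves \eqref{eq:reduced-weak}. Conversely, any solution of \eqref{eq:reduced-weak} is a fixed point, so uniqueness of the fixed point gives uniqueness of the solution.

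To obtain the contraction, take two inputs and denote differences by $\delta$. In the mechanical step, $\|\sF_1-\sF_2\|_{\bV'}\le L_\alpha\sqrt d\,\|\delta\wh\varphi\|_{\LL^2(\Omega)}$. Lemma~\ref{lem:difference-of-velocities}, specifically \eqref{eq:difference-of-displacements}, then gives
\[
\|\delta\bu\|^2_{\LL^2(0,T';\bH^1)}\le C(T')^2\|\delta\wh\varphi\|^2_{\LL^2(0,T';\LL^2)} .
\]
For the chemical step, test the difference equation with $\delta\varphi$, use coercivity, Lipschitz continuity of $f$ and Young's inequality, and integrate with $\delta\varphi(0)=0$:
\[
\|\delta\varphi(t)\|^2_{\LL^2}\le C\int_0^t\bigl(\|\delta\wh\varphi\|^2+\|\delta\wh\theta\|^2+\|\delta\bu\|^2_{\bH^1}\bigr)\ds .
\]
Integrating once more over $(0,T)$ yields $\|\delta\varphi\|^2_{\LL^2(0,T;\LL^2)}\le CT\,\|(\delta\wh\varphi,\delta\wh\theta)\|^2_{X_T}$. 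The $\theta$-equation is linear in time with Lipschitz $g$, so $\|\delta\theta(t)\|\le C\int_0^t(\|\delta\varphi\|+\|\delta\wh\theta\|)\ds$, which gives a bound of the same form with a factor $CT$ (or $CT^2$). Choosing $T_\star\le T'$ small enough that the overall factor is below $1/2$ makes $\sR$ a contraction.

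The main obstacle is ensuring that $T_\star$ does not depend on the initial data. This holds because every constant above comes only from $L_\alpha$, $L_f$, $L_g$, $\kappa_i$, $\kappa_D$, $\|\cC_1\|$ (hence $\mu_1,\lambda_1$), $\varrho$, $\beta$ and $d$. The data $\bu_0$, $\bv_0$, $\varphi_0$, $\theta_0$, $\bb$ cancel in all differences and enter only through the well-definedness of $\sR$. A second point of care is that the mechanical step requires $T'$ from Lemma~\ref{lem:difference-of-velocities}. Since $T'$ is itself data-independent, taking $T_\star\le T'$ is harmless.

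The regularity $\theta\in\HH^1(0,T_\star;\LL^2(\Omega))$ follows directly from the integral representation of $\theta$. The regularity of $(\bv,\varphi)$ is inherited from the linear solves.
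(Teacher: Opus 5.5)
Your proposal is correct and follows essentially the same route as the paper: a Banach fixed point on $(\wh\varphi,\wh\theta)$ in $X_T$, with the velocity from Lemma~\ref{lem:primal-viscoelasticity-well-posedness}, the displacement difference controlled through \eqref{eq:difference-of-displacements}, and an energy estimate for $\delta\varphi$, which together give a data-independent contraction factor. The only deviations are cosmetic: you feed the updated $\varphi$ rather than $\wh\varphi$ into $g$, and you bound $\delta\theta$ directly from its integral representation instead of testing with $\delta\theta$ and invoking Lemma~\ref{lem:improved-Gronwall}; this yields the same estimate more directly.
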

\begin{proof}
Let $0< T \leq t_{\rm end}$ sufficiently small so that Lemmas~\ref{lem:primal-viscoelasticity-well-posedness} and~\ref{lem:difference-of-velocities} hold. Define $X_{T} := \LL^2(0,T;\LL^2(\Omega)) \times \LL^2(0,T;\LL^2(\Omega))$, which is a Banach space endowed with the product norm
\begin{equation}\label{eq:local-well-posedness-proof-norm}
\| (\wh \varphi, \wh \theta) \|_{X_{T}}^2 := \|\wh \varphi\|_{\LL^2(0,T;\LL^2(\Omega))}^2 + \|\wh \theta\|_{\LL^2(0,T;\LL^2(\Omega))}^2 \, .
\end{equation}
Let $\sS:X_{T} \to X_{T}$ be the operator defined so that for each $(\wh \varphi, \wh \theta) \in X_{T}$, $\sS(\wh \varphi, \wh \theta) := (\varphi,\theta)$, where $(\varphi,\theta)$ is the unique solution (to be confirmed below) to
\begin{equation}\label{eq:local-well-posedness-proof-1}
\begin{aligned}
\langle\dot{\varphi},\psi\rangle + (\diff\nabla\varphi,\nabla\psi)_\Omega  &= (f(\wh\varphi,\wh\theta),\psi)_\Omega + \beta \, (\vdiv (\bu_{\wh \varphi}) ,\psi)_\Omega  \qquad \forall\, \psi \in \rQ  \, , \\[1.1ex]
(\dot{\theta},\vartheta)_\Omega &= (g(\wh\varphi,\wh\theta),\vartheta)_\Omega \qquad \forall\, \vartheta \in \LL^2(\Omega) \, ,
\end{aligned}
\end{equation}
for a.e. $t\in (0,T]$, $\varphi(0)=\varphi_0$ and $\theta(0)=\theta_0$. Here, we set $\bu_{\wh \varphi}(t) := \bu_0 + \int_0^t \bv_{\wh \varphi}(s)\ds$, where $\bv_{\wh \varphi}\in \LL^2(0,T;\bV)$, with $\dot \bv_{\wh \varphi} \in \LL^2(0,T;\bV')$, is the unique solution to
\begin{equation}\label{eq:local-well-posedness-proof-1.5}
\varrho\,\langle\dot{\bv}_{\wh \varphi},\bw\rangle + \wt a_2(\bv_{\wh \varphi},\bw) + \int_0^t \wt a_1(\bv_{\wh \varphi}(s),\bw)\ds = \langle\sF_{\wh \varphi}(t),\bw\rangle \qquad\forall\,\bw\in\bV \, ,\ \text{a.e. }t\in(0,T] \, ,
\end{equation}
with $\bv_{\wh \varphi}(0) = \bv_0$. Note that this solution is guaranteed by Lemma~\ref{lem:primal-viscoelasticity-well-posedness} with the right-hand side functional $\sF_{\wh \varphi}\in \LL^2(0,T;\bV')$ given by
\begin{equation*}
\langle \sF_{\wh \varphi}(t), \bw \rangle := (\varrho\,\bb,\bw)_\Omega - \wt a_1(\bu_0,\bw) + (\alpha(\wh \varphi),\vdiv\, \bw)_\Omega \, .
\end{equation*}
Notice that the regularity required by $\sF_{\wh \varphi}$ is satisfied owing to the fact that $\bb\in \LL^2(0,T;\bL^2(\Omega))$, $\wt a_1$ is a bounded bilinear form, and $\alpha$ is bounded.
We also point out that~\eqref{eq:local-well-posedness-proof-1} is obtained after freezing the coupling terms in~\eqref{eq:reduced-weak}.

A fixed point of $\sS$ yields a solution of~\eqref{eq:reduced-weak}: if $(\varphi,\theta)=\sS(\varphi,\theta)$, then setting $\bv:=\bv_{\varphi}$ as in~\eqref{eq:local-well-posedness-proof-1.5}, the triple $(\bv,\varphi,\theta)$ satisfies~\eqref{eq:rw-v}--\eqref{eq:rw-theta}. Conversely, every solution of~\eqref{eq:reduced-weak} arises in this way: if $(\wt\bv,\wt\varphi,\wt\theta)$ solves~\eqref{eq:reduced-weak}, then~\eqref{eq:rw-v} shows that $\wt\bv$ solves~\eqref{eq:local-well-posedness-proof-1.5} with load $\sF_{\wt\varphi}$, whence $\wt\bv=\bv_{\wt\varphi}$ by the uniqueness in Lemma~\ref{lem:primal-viscoelasticity-well-posedness}; substituting into~\eqref{eq:rw-phi}--\eqref{eq:rw-theta} gives $(\wt\varphi,\wt\theta)=\sS(\wt\varphi,\wt\theta)$. Thus solutions of~\eqref{eq:reduced-weak} and fixed points of $\sS$ are in one-to-one correspondence, and uniqueness of the latter yields uniqueness of the former.

\smallskip
\noindent
\textbf{The fixed-point operator is well-defined.} To prove that~\eqref{eq:local-well-posedness-proof-1} is indeed well-posed and, then, $\sS$ is well-defined, we first observe that the right-hand side functionals belong to $\LL^2(0,T;\rQ')$ and $\LL^2(0,T;\LL^2(\Omega))$, respectively. Furthermore, the diffusion term in the first equation in~\eqref{eq:local-well-posedness-proof-1} is coercive (cf.~\eqref{eq:coercivity-D}). Hence, by standard arguments for linear second-order parabolic equations, we deduce that~\eqref{eq:local-well-posedness-proof-1} is well-posed with $\varphi \in \LL^2(0,T;\rQ)$, $\dot{\varphi}\in \LL^2(0,T;\rQ')$, $\theta \in \HH^1(0,T;\LL^2(\Omega))$, and such that $\varphi(0)=\varphi_0$ and $\theta(0)=\theta_0$. In particular, $(\varphi,\theta) \in X_{T}$, which implies that $\sS$ is well-defined.

\smallskip
\noindent
\textbf{The fixed-point operator is Lipschitz continuous.} For each $i\in \{1,2\}$, let $(\wh \varphi_i, \wh \theta_i) \in X_{T}$ and set $\sS(\wh \varphi_i, \wh \theta_i) = (\varphi_i,\theta_i)$. Define $(\delta \varphi,\delta \theta) := (\varphi_1-\varphi_2,\theta_1-\theta_2)$ and note that, by subtracting the corresponding systems for each input, it satisfies
\begin{equation}\label{eq:local-well-posedness-proof-2}
\begin{aligned}
\langle\delta\dot{\varphi},\psi\rangle + (\diff\nabla\delta\varphi,\nabla\psi)_\Omega  &= (f(\wh\varphi_1,\wh\theta_1)-f(\wh\varphi_2,\wh\theta_2),\psi)_\Omega + \beta \, (\vdiv (\bu_{\wh \varphi_1}-\bu_{\wh \varphi_2}) ,\psi)_\Omega \qquad \forall\, \psi \in \rQ  \, , \\[1.1ex]
(\delta\dot{\theta},\vartheta)_\Omega &= (g(\wh\varphi_1,\wh\theta_1)-g(\wh\varphi_2,\wh\theta_2),\vartheta)_\Omega \qquad \forall\, \vartheta \in \LL^2(\Omega) \, ,
\end{aligned}
\end{equation}
and, since both $(\varphi_i,\theta_i)$ satisfy the same initial condition, it follows that $\delta \varphi(0)= 0$ and $\delta \theta(0)=0$. 

\smallskip
\noindent
\textbf{Lipschitz continuity of the calcium concentration part.} We test the first equation in~\eqref{eq:local-well-posedness-proof-2} with $\psi = \delta\varphi$, use the coercivity of $\diff$ (cf.~\eqref{eq:coercivity-D}), the Cauchy--Schwarz inequality and the Lipschitz continuity of $f$ (cf.~\eqref{eq:assumption-f}) to obtain that
\begin{equation}\label{eq:local-well-posedness-proof-3}
\partial_t \|\delta \varphi\|_{\LL^2(\Omega)}^2 + 2\, \kappa_{D} \|\delta \varphi\|_{\rQ}^2 \leq C_1 \, \left( \|\delta \wh \varphi\|_{\LL^2(\Omega)} + \|\delta \wh \theta\|_{\LL^2(\Omega)} + \|\bu_{\wh \varphi_1}-\bu_{\wh \varphi_2}\|_{\bH^1(\Omega)} \right)\, \|\delta \varphi\|_{\LL^2(\Omega)} \, ,
\end{equation}
with $C_1>0$ depending only on $L_{f1}$, $L_{f2}$, $\beta$, and $d$. Using that $\|\delta \varphi\|_{\LL^2(\Omega)} \leq \|\delta \varphi\|_{\rQ}$, we apply Young's inequality on the right-hand side of~\eqref{eq:local-well-posedness-proof-3} and integrate from $0$ to $t$ using that $\delta \varphi(0) = 0$ to deduce that
\begin{equation}\label{eq:local-well-posedness-proof-4}
\begin{aligned}
&\|\delta \varphi\|_{\LL^\infty(0,T;\LL^2(\Omega))}^2 + \|\delta \varphi\|_{\LL^2(0,T;\rQ)}^2 \\
&\quad \leq C_2 \, \left( \|\delta \wh \varphi\|_{\LL^2(0,T;\LL^2(\Omega))}^2 + \|\delta \wh \theta\|_{\LL^2(0,T;\LL^2(\Omega))}^2 + \|\bu_{\wh \varphi_1}-\bu_{\wh \varphi_2}\|_{\LL^2(0,T;\bH^1(\Omega))}^2 \right) \, ,
\end{aligned}
\end{equation}
with $C_2>0$ depending on $C_1$ and $\kappa_{D}$. To bound the last term in~\eqref{eq:local-well-posedness-proof-4}, we use Lemma~\ref{lem:difference-of-velocities} with right-hand side functionals given by $\sF_{\wh \varphi_1}$ and $\sF_{\wh \varphi_2}$, the definition of each $\sF_{\wh \varphi_i}$, the Cauchy--Schwarz inequality, and the Lipschitz continuity of $\alpha$ (cf.~\eqref{eq:alpha-Lipschitz}), thereby arriving at
\begin{equation*}
\|\bu_{\wh \varphi_1}-\bu_{\wh \varphi_2}\|_{\LL^2(0,T;\bH^1(\Omega))}^2 \leq C\, T^2\, \|\sF_{\wh \varphi_1} - \sF_{\wh \varphi_2}\|_{\LL^2(0,T;\bV')}^2 \leq C\, d\, L_{\alpha}^2 T^2 \, \,\|\wh \varphi_1-\wh \varphi_2\|_{\LL^2(0,T;\LL^2(\Omega))}^2\, .
\end{equation*}
Substituting this into~\eqref{eq:local-well-posedness-proof-4} yields
\begin{equation*}
\|\delta \varphi\|_{\LL^\infty(0,T;\LL^2(\Omega))}^2 + \|\delta \varphi\|_{\LL^2(0,T;\rQ)}^2 \leq C_2 \, \left( (1+C\, d\, L_{\alpha}^2\, T^2) \|\delta \wh \varphi\|_{\LL^2(0,T;\LL^2(\Omega))}^2 + \|\delta \wh \theta\|_{\LL^2(0,T;\LL^2(\Omega))}^2 \right) \, ,
\end{equation*}
which, combined with the fact that $\|\delta \varphi \|_{\LL^2(0,T;\LL^2(\Omega))}^2 \leq T\, \|\delta \varphi \|_{\LL^\infty(0,T;\LL^2(\Omega))}^2$, implies
\begin{equation}\label{eq:local-well-posedness-proof-7}
\|\delta \varphi \|_{\LL^2(0,T;\LL^2(\Omega))}^2 \leq  C_3\, (T^3 + T) \, \left( \|\delta \wh \varphi\|_{\LL^2(0,T;\LL^2(\Omega))}^2 + \|\delta \wh \theta\|_{\LL^2(0,T;\LL^2(\Omega))}^2  \right) \, ,
\end{equation}
with $C_3>0$ depending only on $C_2$ and $C$.

\smallskip
\noindent
\textbf{Lipschitz continuity of the iIT part.}
We now turn our attention to the second equation in~\eqref{eq:local-well-posedness-proof-2}. Testing with $\vartheta = \delta \theta$, we then use the Cauchy--Schwarz inequality, and the Lipschitz continuity of $g$ (cf.~\eqref{eq:assumption-g}) to arrive at
\begin{align*}
\partial_t \|\delta \theta\|_{\LL^2(\Omega)}^2 \leq 2 \left( L_{g1} \, \|\delta \wh \varphi\|_{\LL^2(\Omega)} + L_{g2} \, \|\delta \wh \theta\|_{\LL^2(\Omega)} \right)\, \|\delta \theta\|_{\LL^2(\Omega)}  \, ,
\end{align*}
which upon integrating from $0$ to $t$ and using that $\delta \theta(0) = 0$, yields
\begin{equation*}
\|\delta \theta(t)\|_{\LL^2(\Omega)}^2 \leq 2\, \int_0^t \left( L_{g1} \, \|\delta \wh \varphi(s)\|_{\LL^2(\Omega)} + L_{g2} \, \|\delta \wh \theta(s)\|_{\LL^2(\Omega)} \right)\, \|\delta \theta(s)\|_{\LL^2(\Omega)}\ds \, .
\end{equation*}
Then, we use Lemma~\ref{lem:improved-Gronwall} to get
\begin{align*}
&\|\delta \theta(t)\|_{\LL^2(\Omega)} \leq \int_0^t \left( L_{g1} \, \|\delta \wh \varphi(s)\|_{\LL^2(\Omega)} + L_{g2} \, \|\delta \wh \theta(s)\|_{\LL^2(\Omega)} \right)  \ds \\
&\quad \leq \max\{L_{g1},L_{g2}\}\, \sqrt{t}\, \left( \|\delta \wh \varphi\|_{\LL^2(0,t;\LL^2(\Omega))} + \|\delta \wh \theta\|_{\LL^2(0,t;\LL^2(\Omega))} \right) \, ,
\end{align*}
and, by squaring, using Young's inequality, bearing in mind the definition of the Bochner norms, and that $\|\bullet\|_{\LL^2(0,T;\LL^2(\Omega))} \leq \sqrt{T}\, \|\bullet\|_{\LL^\infty(0,T;\LL^2(\Omega))}$, we obtain
\begin{equation}\label{eq:local-well-posedness-proof-8}
\|\delta \theta\|_{\LL^2(0,T;\LL^2(\Omega))}^2 \leq T\, \|\delta \theta\|_{\LL^\infty(0,T;\LL^2(\Omega))}^2 \leq C_4\, T^2\, \left( \|\delta \wh \varphi\|_{\LL^2(0,T;\LL^2(\Omega))}^2 + \|\delta \wh \theta\|_{\LL^2(0,T;\LL^2(\Omega))}^2  \right)\, ,
\end{equation} 
with $C_4>0$ depending only on $L_{g1}$ and $L_{g2}$.

\smallskip
\noindent
\textbf{Contractivity of the Lipschitz continuous operator.}
By summing~\eqref{eq:local-well-posedness-proof-7} and~\eqref{eq:local-well-posedness-proof-8}, and recalling the definition of the norm~\eqref{eq:local-well-posedness-proof-norm}, we deduce that
\begin{equation}\label{eq:local-well-posedness-proof-9}
\|(\delta\varphi,\delta\theta)\|_{X_{T}}^2 \leq C_5\, (T^3 + T^2 + T)\, \|(\delta \wh \varphi,\delta \wh \theta)\|_{X_{T}}^2\, .
\end{equation}
Then, by taking $T$ so small that $C_5\, (T^3 + T^2 + T) \leq 1/4$, we deduce from~\eqref{eq:local-well-posedness-proof-9} that
\begin{equation*}
\|\sS(\wh\varphi_1,\wh \theta_1)-\sS(\wh\varphi_2,\wh \theta_2)\|_{X_{T}} \leq \fracd{1}{2}\, \|(\wh \varphi_1-\wh \varphi_2 , \wh \theta_1-\wh \theta_2)\|_{X_{T}}\, ,
\end{equation*}
which shows that $\sS$ is a contraction map defined on the Banach space $X_T$. Therefore, the Banach fixed-point theorem guarantees the existence of a unique fixed point. Consequently, \eqref{eq:reduced-weak} admits a unique solution on the time interval $[0,T]$.
\end{proof}

\begin{theorem}[Global existence and uniqueness]\label{thm:global}
There exists a unique solution $(\bv,\varphi,\theta)$ satisfying~\eqref{eq:reduced-weak} on the whole interval $(0,t_{\rm end}]$, and such that $\bv \in \LL^2(0,t_{\rm end};\bV)$, $\varphi \in \LL^2(0,t_{\rm end};\rQ)$, $\theta \in \HH^1(0,t_{\rm end};\LL^2(\Omega))$, with $\dot{\bv} \in \LL^2(0,t_{\rm end};\bV')$ and $\dot{\varphi} \in \LL^2(0,t_{\rm end};\rQ')$, $\bv(0)=\bv_0$, $\varphi(0)=\varphi_0$, and $\theta(0)=\theta_0$.
\end{theorem}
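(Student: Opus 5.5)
The plan is to iterate the local result of Theorem~\ref{thm:local}, using that $T_\star$ does not depend on the initial data. Fix $T_\star$ as in Theorem~\ref{thm:local}, set $N:=\lceil t_{\rm end}/T_\star\rceil$, and let $t_k:=\min\{kT_\star,t_{\rm end}\}$ for $k=0,\dots,N$. On $[t_0,t_1]$, Theorem~\ref{thm:local} gives a unique solution $(\bv,\varphi,\theta)$. Assuming a solution has been built on $[0,t_k]$, we restart the problem at time $t_k$. The new initial data are $\bv(t_k)$, $\varphi(t_k)$, $\theta(t_k)$, and the new reference displacement is $\bu(t_k)=\bu_0+\int_0^{t_k}\bv\ds$.

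\textbf{Well-defined restart data.} By Remark~\ref{rem:lions-magenes}, $\bv(t_k)\in\bL^2(\Omega)$ and $\varphi(t_k)\in\LL^2(\Omega)$. Since $\theta\in \HH^1(0,t_k;\LL^2(\Omega))\subset C([0,t_k];\LL^2(\Omega))$, also $\theta(t_k)\in\LL^2(\Omega)$. Finally $\bu(t_k)\in\bH^1(\Omega)$ because $\bv\in\LL^2(0,t_k;\bV)$. Thus Assumption~\ref{assum:data-regularity} holds for the shifted data, together with $\bb(\cdot+t_k)$.

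\textbf{Shifted equations.} We must check that \eqref{eq:reduced-weak} on $[t_k,t_{k+1}]$ is exactly of the form treated in Theorem~\ref{thm:local}. The history integral splits as
\begin{equation*}
\int_0^t \wt a_1(\bv(s),\bw)\ds=\wt a_1\Big(\int_0^{t_k}\bv\ds,\bw\Big)+\int_{t_k}^t\wt a_1(\bv(s),\bw)\ds .
\end{equation*}
The first term combines with $-\wt a_1(\bu_0,\bw)$ into $-\wt a_1(\bu(t_k),\bw)$. In the same way, $\beta(\vdiv\,\bu_0,\psi)_\Omega$ and the part $\int_0^{t_k}\vdiv\,\bv\ds$ of the history integral in \eqref{eq:rw-phi} combine into $\beta(\vdiv\,\bu(t_k),\psi)_\Omega$. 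In shifted time $\tau=t-t_k$, the system is therefore \eqref{eq:reduced-weak} with $\bu_0$ replaced by $\bu(t_k)$.

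The constants in Theorem~\ref{thm:local} (coercivity and Lipschitz constants, $\lambda_1,\mu_1,\varrho,\beta,d$) are unchanged. Hence a unique solution exists on an interval of length $T_\star$, truncated at $t_{\rm end}$.

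\textbf{Gluing.} The concatenated functions lie in $\LL^2(0,t_{k+1};\bV)$, $\LL^2(0,t_{k+1};\rQ)$ and $\HH^1(0,t_{k+1};\LL^2(\Omega))$. This is the main point needing care: the distributional time derivatives of the glued $\bv$ and $\varphi$ must lie in $\LL^2(0,t_{k+1};\bV')$ and $\LL^2(0,t_{k+1};\rQ')$ respectively. That holds because the pieces match continuously at $t_k$ in $\bL^2(\Omega)$ and $\LL^2(\Omega)$, so no jump (Dirac) term appears when integrating by parts against test functions in time. The same continuity argument handles $\theta$. After finitely many steps we obtain a solution on $(0,t_{\rm end}]$.

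\textbf{Uniqueness.} Let $(\bv^1,\varphi^1,\theta^1)$ and $(\bv^2,\varphi^2,\theta^2)$ be two global solutions. By Theorem~\ref{thm:local} they coincide on $[0,t_1]$. Suppose they coincide on $[0,t_k]$. Then their restrictions to $[t_k,t_{k+1}]$ solve the same shifted problem: same restart data and same $\bu(t_k)$, since the histories agree. Theorem~\ref{thm:local} then gives agreement on $[t_k,t_{k+1}]$, and induction completes the proof.
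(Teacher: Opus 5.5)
Your proposal is correct and follows the paper's own proof: you restart at multiples of $T_\star$ with the endpoint values as new data, absorb the memory up to $t_k$ into the offset $\bu(t_k)$, glue the pieces using continuity in time at the junctions, and prove uniqueness by induction over the subintervals. The only difference is that you are more explicit than the paper about the history term in the calcium equation and about why the glued time derivatives stay in $\LL^2$.
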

\begin{proof}
Since Theorem~\ref{thm:local} yields a unique solution on $[0,T_\star]$ and the existence time $T_\star$ is independent of the initial data, the local argument can be iterated. Indeed, the endpoint values $\bv(T_\star)\in\bL^2(\Omega)$, $\varphi(T_\star),\theta(T_\star)\in\LL^2(\Omega)$, and $\bu(T_\star)\in\bH^1(\Omega)$ satisfy the hypotheses of Theorem~\ref{thm:local}. Moreover, for $t\geq T_\star$ the viscoelastic memory splits as $\int_0^t \wt a_1(\bv(s),\bw)\ds = \wt a_1(\bu(T_\star)-\bu_0,\bw) + \int_{T_\star}^t \wt a_1(\bv(s),\bw)\ds$, so that~\eqref{eq:reduced-weak} on $[T_\star,2T_\star]$ takes again the form analysed in Theorem~\ref{thm:local}, now with offset $\bu(T_\star)$ and the memory measured from $T_\star$. Therefore, they may be taken as new initial data and offset to construct a unique solution on $[T_\star,2T_\star]$. Repeating this procedure finitely many times, and noting that the length of the existence interval remains unchanged at each step, one covers the entire interval $(0,t_{\rm end}]$ in at most $\lceil t_{\rm end}/T_\star\rceil$ iterations. Since each piece is continuous in time at the junctions, the concatenation belongs to the spaces asserted above. This establishes existence of a global solution. To prove uniqueness, we let $(\bv_i,\varphi_i,\theta_i)$, $i\in\{1,2\}$, be two solutions on $(0,t_{\rm end}]$. By Theorem~\ref{thm:local} they coincide on $[0,T_\star]$; in particular their values at $T_\star$ agree, so, again by Theorem~\ref{thm:local}, they coincide on $[T_\star,2T_\star]$, and inductively on each $[k\, T_\star,(k+1)\, T_\star]$. Hence they agree on $(0,t_{\rm end}]$. This completes the proof.
\end{proof}

\begin{theorem}[Enhanced regularity]\label{thm:enhanced-regularity}
Let $(\bv,\varphi,\theta)$ be the solution to~\eqref{eq:reduced-weak}, guaranteed by Theorem~\ref{thm:global}. In addition to Assumption~\ref{assum:data-regularity}, suppose that $\bu_0,\bv_0\in \bV$ and $\varphi_0 \in \rQ$. Then,
\begin{equation*}
\dot{\bv} \in \LL^2(0,t_{\rm end};\bL^2(\Omega)) \qan \dot{\varphi} \in \LL^2(0,t_{\rm end};\LL^2(\Omega))\, .
\end{equation*}
\end{theorem}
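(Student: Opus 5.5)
The plan is the standard parabolic regularity argument: test \eqref{eq:rw-v} with $\bw=\dot{\bv}$ and \eqref{eq:rw-phi} with $\psi=\dot{\varphi}$, carried out rigorously at the Faedo--Galerkin level (or via time-regularised test functions), and then pass to the limit using weak lower semicontinuity and the uniqueness from Theorem~\ref{thm:global}. Since the solution is unique, it suffices to derive the a priori bounds for Galerkin approximations $(\bv_m,\varphi_m,\theta_m)$ with projected initial data $\bv_{0,m}\to\bv_0$ in $\bV$ and $\varphi_{0,m}\to\varphi_0$ in $\rQ$. These projections are chosen $\bV$- and $\rQ$-stable, for instance the Ritz projections associated with $\wt a_2$ and with $(\diff\nabla\cdot,\nabla\cdot)_\Omega$.

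\textbf{Velocity estimate.} Test with $\dot{\bv}_m$. By symmetry of $\wt a_2$ this gives
\[
\varrho\|\dot{\bv}_m\|^2_{\bL^2(\Omega)}+\tfrac12\partial_t\wt a_2(\bv_m,\bv_m)=(\varrho\bb,\dot{\bv}_m)_\Omega-\wt a_1(\bu_m(t),\dot{\bv}_m)+(\alpha(\varphi_m),\vdiv\,\dot{\bv}_m)_\Omega ,
\]
where $\bu_m(t)=\bu_0+\int_0^t\bv_m$ absorbs both the memory term and the offset. The load term is handled by Young's inequality. The last two terms contain $\dot{\bv}_m$ in $\bV$-norm, which is not controlled, so they are integrated by parts in time. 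Writing $-\wt a_1(\bu_m,\dot{\bv}_m)=-\partial_t\wt a_1(\bu_m,\bv_m)+\wt a_1(\bv_m,\bv_m)$, we integrate over $(0,t)$ and use $\wt a_1(\bu_m(t),\bv_m(t))\le \epsilon\|\bv_m(t)\|_{\bV}^2+C_\epsilon\|\bu_m(t)\|_{\bH^1}^2$. Here $\|\bu_m(t)\|_{\bH^1}\le \|\bu_0\|_{\bH^1}+\sqrt t\,\|\bv_m\|_{\LL^2(0,t;\bV)}$, and this is bounded by the basic energy estimate. That estimate follows from the contraction arguments of Lemma~\ref{lem:difference-of-velocities} with $\sF$ bounded, since $\alpha$ is bounded.

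Similarly, $(\alpha(\varphi_m),\vdiv\dot{\bv}_m)=\partial_t(\alpha(\varphi_m),\vdiv\bv_m)-(\alpha'(\varphi_m)\dot{\varphi}_m,\vdiv\bv_m)$. The endpoint term is bounded by $\alpha_0|\Omega|^{1/2}\|\bv_m(t)\|_{\bV}$. The remainder is bounded by $\|\alpha'\|_\infty\|\dot{\varphi}_m\|_{\LL^2}\|\bv_m\|_{\bV}\le \epsilon\|\dot{\varphi}_m\|^2_{\LL^2}+C_\epsilon\|\bv_m\|^2_{\bV}$, which couples this estimate to the calcium estimate. The initial terms involve $\wt a_2(\bv_0,\bv_0)$, $\wt a_1(\bu_0,\bv_0)$ and $(\alpha(\varphi_0),\vdiv\bv_0)$. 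These are finite precisely because $\bu_0,\bv_0\in\bV$, which is exactly why the extra hypotheses are needed.

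\textbf{Calcium estimate.} Test \eqref{eq:rw-phi} with $\dot{\varphi}_m$:
\[
\|\dot{\varphi}_m\|^2+\tfrac12\partial_t(\diff\nabla\varphi_m,\nabla\varphi_m)=(f(\varphi_m,\theta_m),\dot{\varphi}_m)+\beta(\vdiv\bu_m,\dot{\varphi}_m).
\]
Global Lipschitz continuity gives $|f(\varphi,\theta)|\le |f(0,0)|+L_{f1}|\varphi|+L_{f2}|\theta|$, and $\theta_m$ is bounded in $\LL^\infty(\LL^2)$ by a Gronwall estimate on \eqref{eq:rw-theta}. Hence the right-hand side is at most $\tfrac12\|\dot{\varphi}_m\|^2+C(1+\|\varphi_m\|^2+\|\theta_m\|^2+\|\bu_m\|^2_{\bH^1})$. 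The initial term $(\diff\nabla\varphi_0,\nabla\varphi_0)$ is finite because $\varphi_0\in\rQ$.

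\textbf{Closing the argument.} Add the two integrated inequalities, choosing $\epsilon$ so that the term $\epsilon\|\dot{\varphi}_m\|^2$ is absorbed by the calcium estimate. Coercivity \eqref{eq:coercivity-a}--\eqref{eq:coercivity-D} then gives
\[
\|\dot{\bv}_m\|^2_{\LL^2(0,t;\bL^2)}+\|\dot{\varphi}_m\|^2_{\LL^2(0,t;\LL^2)}+\|\bv_m(t)\|_{\bV}^2+\|\varphi_m(t)\|_{\rQ}^2\le C+C\int_0^t\big(\|\bv_m\|^2_{\bV}+\|\varphi_m\|^2_{\rQ}\big)\ds ,
\]
and Gronwall's lemma bounds everything uniformly in $m$. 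Weak compactness, together with identification of the limit through uniqueness, yields the claim on $(0,t_{\rm end}]$.

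The main obstacle is the term $(\alpha(\varphi),\vdiv\,\dot{\bv})$. It forces the time integration by parts and makes the two estimates interdependent, so both must be closed simultaneously with carefully ordered Young weights. A secondary point is justifying the formal tests rigorously, which is why the argument is run at the Galerkin level.
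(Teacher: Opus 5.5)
Your proof is correct and shares most of its skeleton with the paper's. Testing with $\dot{\bv}$ and $\dot{\varphi}$, rewriting $\wt a_1(\bu,\dot{\bv})=\partial_t\wt a_1(\bu,\bv)-\wt a_1(\bv,\bv)$, absorbing the endpoint cross term, and the calcium estimate are all as in the paper.

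The genuine difference is the active term $(\alpha(\varphi),\vdiv\,\dot{\bv})_\Omega$. The paper integrates by parts in \emph{space}, writing $(\alpha(\varphi),\vdiv\,\dot{\bv})_\Omega=-(\alpha'(\varphi)\nabla\varphi,\dot{\bv})_\Omega$. The boundary term vanishes because $\dot{\bv}=\cero$ on $\Gamma^{\bu}$ and $\alpha(\varphi)=\alpha(0)=0$ on $\Gamma^{\bsigma}$. This needs only $\varphi\in\LL^2(0,t_{\rm end};\rQ)$, which Theorem~\ref{thm:global} already provides, so the velocity estimate never involves $\dot{\varphi}$ and the two estimates are fully decoupled. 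You integrate by parts in \emph{time}. That costs a bound on $\dot{\varphi}$ and uses boundedness of $\alpha'$, but it uses neither $\alpha(0)=0$ nor the fact that the calcium Dirichlet boundary coincides with the traction boundary. This step of your argument would therefore survive, for instance, the zero-flux calcium condition on all of $\partial\Omega$ used in the last numerical experiment. There the spatial integration by parts would leave the uncontrolled term $(\alpha(\varphi),\dot{\bv}\cdot\bn)_{\Gamma^{\bsigma}}$.

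The interdependence you single out as the main obstacle is only apparent. The calcium estimate closes by itself, since it needs only a bound on $\vdiv\,\bu_m$ in $\LL^2(0,t_{\rm end};\LL^2(\Omega))$ from the basic energy estimate. You can therefore derive it first and then insert it into the velocity estimate, with no balancing of Young weights.

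Two minor points of rigour. First, the basic energy bound is obtained directly by testing with $\bv_m$ and using $\wt a_1(\bu_m,\bv_m)=\tfrac12\partial_t\wt a_1(\bu_m,\bu_m)$. Lemma~\ref{lem:difference-of-velocities} is a difference estimate and does not by itself give it. Second, weak compactness alone does not identify the limit of a Galerkin scheme for the \emph{nonlinear} system; you need strong convergence, e.g.\ via Aubin--Lions for $\varphi_m$. You can bypass this by applying Galerkin only to the linear problems obtained by freezing $\alpha(\varphi)$ and $f(\varphi,\theta)$ at the known unique solution.
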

\begin{proof}
The a priori estimates below are formal in that they require testing with $\dot{\varphi}$ and $\dot{\bv}$, which a priori belong only to $\rQ'$ and $\bV'$; they are made rigorous by performing the computation on the Galerkin approximations (or a difference-quotient argument in time) underlying the parabolic theory invoked in Theorem~\ref{thm:local} and passing to the limit by weak lower semicontinuity.

\smallskip
\noindent\textbf{Enhanced regularity of the velocity.}
Since $\bu(t)=\bu_0+\int_0^t\bv(s)\ds$, the memory term and the offset combine as $\int_0^t \wt a_1(\bv(s),\bw)\ds + \wt a_1(\bu_0,\bw) = \wt a_1(\bu(t),\bw)$, so that~\eqref{eq:rw-v} reads
\begin{equation*}
\varrho\,\langle\dot{\bv},\bw\rangle + \wt a_2(\bv,\bw) + \wt a_1(\bu(t),\bw) = (\varrho\,\bb,\bw)_\Omega + (\alpha(\varphi),\vdiv\,\bw)_\Omega \qquad\forall\,\bw\in\bV\, .
\end{equation*}
Testing against $\bw=\dot{\bv}$ gives
\begin{equation}\label{eq:enhanced-regularity-proof-1}
\varrho\,\|\dot{\bv}\|_{\bL^2(\Omega)}^2 + \fracd{1}{2}\, \partial_t \wt a_2(\bv,\bv) + \wt a_1(\bu,\dot{\bv}) = (\varrho\,\bb,\dot{\bv})_\Omega + (\alpha(\varphi),\vdiv\,\dot{\bv})_\Omega\, .
\end{equation}
For the active term, integration by parts in space gives $(\alpha(\varphi),\vdiv\,\dot{\bv})_\Omega = -(\nabla\alpha(\varphi),\dot{\bv})_\Omega$, where the boundary contribution vanishes because $\dot{\bv}=\cero$ on $\Gamma^{\bu}$ and $\alpha(\varphi)=\alpha(0)=0$ on $\Gamma^{\bsigma}$, and $\nabla\alpha(\varphi)=\alpha'(\varphi)\nabla\varphi$ satisfies $\|\nabla\alpha(\varphi)\|_{\bL^2(\Omega)}\leq L_\alpha\,\|\nabla\varphi\|_{\bL^2(\Omega)}$. Hence, by the Cauchy--Schwarz and Young inequalities, the right-hand side of~\eqref{eq:enhanced-regularity-proof-1} is bounded as
\begin{equation*}
(\varrho\,\bb,\dot{\bv})_\Omega + (\alpha(\varphi),\vdiv\,\dot{\bv})_\Omega \leq \fracd{\varrho}{2}\,\|\dot{\bv}\|_{\bL^2(\Omega)}^2 + \varrho\,\|\bb\|_{\bL^2(\Omega)}^2 + \fracd{L_\alpha^2}{\varrho}\,\|\nabla\varphi\|_{\bL^2(\Omega)}^2\, .
\end{equation*}
Collecting the above, integrating from $0$ to $\tau\in(0,t_{\rm end}]$ in~\eqref{eq:enhanced-regularity-proof-1}, using $\bv(0)=\bv_0$ and $\bu(0)=\bu_0$, and using that $\wt a_1(\bu,\dot{\bv}) = \partial_t \wt a_1(\bu,\bv) - \wt a_1(\bv,\bv)$ since $\dot{\bu}=\bv$, we arrive at
\begin{equation}\label{eq:enhanced-regularity-proof-2}
\begin{aligned}
&\fracd{\varrho}{2}\int_0^\tau\!\|\dot{\bv}\|_{\bL^2(\Omega)}^2\ds + \fracd{1}{2}\,\wt a_2(\bv(\tau),\bv(\tau)) + \wt a_1(\bu(\tau),\bv(\tau))\\
&\quad \leq \fracd{1}{2}\,\wt a_2(\bv_0,\bv_0) + \wt a_1(\bu_0,\bv_0) + \int_0^\tau\!\wt a_1(\bv,\bv)\ds + \varrho\int_0^\tau\|\bb\|_{\bL^2(\Omega)}^2\ds + \fracd{L_\alpha^2}{\varrho}\int_0^\tau\|\nabla\varphi\|_{\bL^2(\Omega)}^2\ds \, .
\end{aligned}
\end{equation}
The cross term on the left-hand side is controlled by the Cauchy--Schwarz inequality for the symmetric positive semidefinite form $\wt a_1$, and Young's inequality (with some $\epsilon>0$), to obtain
\begin{equation*}
|\wt a_1(\bu(\tau),\bv(\tau))| \leq \wt a_1(\bu(\tau),\bu(\tau))^{1/2}\, \wt a_1(\bv(\tau),\bv(\tau))^{1/2} \leq \fracd{1}{2\epsilon}\,\wt a_1(\bu(\tau),\bu(\tau)) + \fracd{\epsilon}{2}\,\wt a_1(\bv(\tau),\bv(\tau)) \, .
\end{equation*}
We then use the comparison
\begin{equation*}
\wt a_1(\bw,\bw)\leq \frac{2\mu_1+d\lambda_1}{2\mu_2}\,\wt a_2(\bw,\bw) \, ,
\end{equation*}
which follows from $2\mu_2\,\|\beps(\bw)\|_{\bbL^2(\Omega)}^2\leq \wt a_2(\bw,\bw)$ and $\wt a_1(\bw,\bw)\leq (2\mu_1+d\lambda_1)\|\beps(\bw)\|_{\bbL^2(\Omega)}^2$, and choosing $\epsilon=\tfrac{\mu_2}{2\mu_1+d\lambda_1}$, yields
\begin{equation*}
|\wt a_1(\bu(\tau),\bv(\tau))| \leq \fracd{2\mu_1+d\lambda_1}{2\mu_2}\,\wt a_1(\bu(\tau),\bu(\tau)) + \fracd{1}{4}\,\wt a_2(\bv(\tau),\bv(\tau))\, .
\end{equation*}
Inserting this bound into~\eqref{eq:enhanced-regularity-proof-2} and then neglecting the term $\wt a_2(\bv(\tau),\bv(\tau))\geq 0$,
\begin{equation}\label{eq:enhanced-regularity-proof-3}
\int_0^\tau\!\|\dot{\bv}\|_{\bL^2(\Omega)}^2\ds \leq C\, \left( \Theta(\tau) + \int_0^\tau\!\wt a_1(\bv,\bv)\ds + \wt a_1(\bu(\tau),\bu(\tau)) + \int_0^\tau\|\nabla\varphi\|_{\bL^2(\Omega)}^2\ds \right) \, ,
\end{equation}
with $C>0$ depending only on $\varrho$, $\mu_1$, $\lambda_1$, $\mu_2$, $L_{\alpha}$, and $d$, where
\begin{equation*}
\Theta(\tau) := \wt a_2(\bv_0,\bv_0) + \wt a_1(\bu_0,\bv_0)  + \int_0^\tau\|\bb\|_{\bL^2(\Omega)}^2\ds  \, .
\end{equation*}
Each term on the right-hand side of the above estimate is finite, uniformly in $\tau\in(0,t_{\rm end}]$. In fact, $\Theta(\tau)$ is finite as $\bu_0,\bv_0 \in \bV$ and $\bb \in \LL^2(0,t_{\rm end};\bL^2(\Omega))$. Additionally,
\begin{gather*}
\int_0^\tau \wt a_1(\bv,\bv)\ds\leq\|\cC_1\|\,\|\bv\|_{\LL^2(0,t_{\rm end};\bV)}^2 \, , \quad \wt a_1(\bu(\tau),\bu(\tau))\leq\|\cC_1\|\,\|\bu(\tau)\|_{\bV}^2 \, ,\\
\qan \|\bu(\tau)\|_{\bV}\leq\|\bu_0\|_{\bV}+\sqrt{t_{\rm end}}\,\|\bv\|_{\LL^2(0,t_{\rm end};\bV)} \, ,
\end{gather*}
and $\varphi \in \LL^2(0,t_{\rm end};\rQ)$. Therefore, taking $\tau=t_{\rm end}$ in~\eqref{eq:enhanced-regularity-proof-3}, we obtain that $\|\dot{\bv}\|_{\LL^2(0,t_{\rm end};\bL^2(\Omega))}^2 \leq \wt C < +\infty$, which proves that $\dot{\bv}\in\LL^2(0,t_{\rm end};\bL^2(\Omega))$.

\smallskip
\noindent\textbf{Enhanced regularity of the calcium concentration.}
Recalling that $\vdiv\,\bu = \vdiv\,\bu_0 + \int_0^t\vdiv\,\bv\ds$, equation~\eqref{eq:rw-phi} tested against $\psi=\dot{\varphi}$ yields
\begin{equation}\label{eq:enhanced-regularity-proof-4}
\|\dot{\varphi}\|_{\LL^2(\Omega)}^2 + \fracd{1}{2}\, \partial_t (\diff \nabla \varphi,\nabla\varphi)_\Omega = (f(\varphi,\theta),\dot\varphi)_\Omega + \beta\, (\vdiv(\bu),\dot\varphi)_\Omega\, .
\end{equation}
Bounding the right-hand side by the Cauchy--Schwarz and Young inequalities,
\begin{equation*}
(f(\varphi,\theta),\dot\varphi)_\Omega + \beta\, (\vdiv\,\bu,\dot\varphi)_\Omega \leq \fracd{1}{2}\, \|\dot{\varphi}\|_{\LL^2(\Omega)}^2 + \|f(\varphi,\theta)\|_{\LL^2(\Omega)}^2 + \beta^2\, \|\vdiv\,\bu\|_{\LL^2(\Omega)}^2\, ,
\end{equation*}
so that, integrating~\eqref{eq:enhanced-regularity-proof-4} from $0$ to $t_{\rm end}$, using that $(\diff\nabla\varphi,\nabla\varphi)_\Omega\geq 0$ and $\varphi(0)=\varphi_0$, we obtain
\begin{equation*}
\fracd{1}{2}\, \|\dot{\varphi}\|_{\LL^2(0,t_{\rm end};\LL^2(\Omega))}^2 \leq \fracd{1}{2}\,(\diff\nabla\varphi_0,\nabla\varphi_0)_\Omega + \|f(\varphi,\theta)\|_{\LL^2(0,t_{\rm end};\LL^2(\Omega))}^2 + \beta^2\, \|\vdiv\,\bu\|_{\LL^2(0,t_{\rm end};\LL^2(\Omega))}^2\, .
\end{equation*}
The right-hand side is finite. Indeed, $(\diff\nabla\varphi_0,\nabla\varphi_0)_\Omega\leq \overline{D}\,\|\nabla\varphi_0\|_{\bL^2(\Omega)}^2<\infty$ since $\varphi_0\in\rQ$. In turn, the Lipschitz continuity of $f$ (cf.~\eqref{eq:assumption-f}) together with $\varphi,\theta\in\LL^2(0,t_{\rm end};\LL^2(\Omega))$ gives $f(\varphi,\theta)\in\LL^2(0,t_{\rm end};\LL^2(\Omega))$. In addition, $\bu\in\LL^\infty(0,t_{\rm end};\bH^1(\Omega))$ yields $\vdiv\,\bu\in\LL^2(0,t_{\rm end};\LL^2(\Omega))$. Hence $\dot{\varphi}\in\LL^2(0,t_{\rm end};\LL^2(\Omega))$. This completes the proof.
\end{proof}

\subsection{Well-posedness of the mixed system}

Let us now reconstruct, from the solutions to the integro-differential equations~\eqref{eq:reduced-weak}, the mixed variables and prove that they satisfy~\eqref{eq:mixed-weak}. Throughout this section, we assume the additional regularity on the data required by Theorem~\ref{thm:enhanced-regularity}, namely, $\bu_0,\bv_0 \in \bV$ and $\varphi_0 \in \rQ$. We denote by $(\bv,\varphi,\theta)$ the solution of~\eqref{eq:reduced-weak} guaranteed by Theorem~\ref{thm:global}. Furthermore, as before, we set
\begin{equation}\label{eq:reconstructed-displacement}
\bu(t) := \bu_0 + \int_0^t \bv(s)\ds \, .
\end{equation}
Since $\bv$ is continuous (see Remark~\ref{rem:lions-magenes}), it follows that $\bu$ is as well. More precisely, from the regularity of $\bv$ (cf. Theorem~\ref{thm:global}), we have that $\bu \in \LL^2(0,t_{\rm end};\bV)$ and $\dot{\bu} = \bv$, so $\dot{\bu} \in \LL^2(0,t_{\rm end};\bV)$.
The fact that $\bu(t)\in \bV$ means two things: namely, that $\bu(t)\in \bH^1(\Omega)$ and that $\bu(t)$ vanishes on $\Gamma^\bu$. Both follow from $\bu_0\in \bV$ and $\bv(s)\in \bV$.

Then, define the mixed variables as in their strong form, but now understanding that they are taken in the sense of distributions:
\begin{equation}\label{eq:reconstructed-mixed-variables}
\bsigma_1 := \cC_1\, \beps(\bu)\, , \quad \bsigma_2 := \cC_2\, \beps(\bv) - \alpha(\varphi)\, \bbI \, , \quad \bomega := \fracd{1}{2}\, (\bnabla \bv - (\bnabla \bv)^\top)\, , \qan \bz := \diff \, \nabla \varphi \, .
\end{equation}
The regularity of the primal variables ensured by Theorem~\ref{thm:global} guarantees that each mixed variable is well defined for a.e. $t\in(0,t_{\rm end}]$.

The following result asserts that these mixed variables, together with the primal ones, solve~\eqref{eq:mixed-weak}. This gives us also the spatial regularity that problem~\eqref{eq:mixed-weak} requires.

\begin{theorem}[Existence of a solution]\label{thm:mixed-reconstructions}
Assume the hypotheses of Theorem~\ref{thm:enhanced-regularity}. Let $(\bv,\varphi,\theta)$ be the solution to~\eqref{eq:reduced-weak} given by Theorem~\ref{thm:global}, $\bu$ defined as in~\eqref{eq:reconstructed-displacement}, and $(\bsigma_1,\bsigma_2,\bomega,\bz)$ as in~\eqref{eq:reconstructed-mixed-variables}. Then, for a.e. $t\in (0,t_{\rm end}]$, 
\begin{equation*}
\bsigma_1(t),\, \bsigma_2(t) \in \bbL_{\mathrm{sym}}^2(\Omega) \, , \quad (\bsigma_1+\bsigma_2)(t) \in \bbH_\star(\bdiv;\Omega)\, , \quad \bomega(t) \in \bbL^2_{\sk}(\Omega)\, , \quad \bz(t)\in \bH_\star(\vdiv;\Omega)\, .
\end{equation*}
Moreover, $(\bsigma_1,\bsigma_2,\bv,\bomega,\bz,\varphi,\theta)$ is a solution to~\eqref{eq:mixed-weak}.
\end{theorem}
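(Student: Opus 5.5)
The proof has two parts: first the regularity and membership claims, then each equation of \eqref{eq:mixed-weak} in turn, obtained by reading the reduced weak formulation \eqref{eq:reduced-weak} distributionally.

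\textbf{Regularity and memberships.} Since $\bu(t),\bv(t)\in\bV$ and $\varphi(t)\in\rQ$, the definitions \eqref{eq:reconstructed-mixed-variables} give $\bsigma_1,\bsigma_2\in\bbL^2_{\mathrm{sym}}(\Omega)$, $\bomega\in\bbL^2_{\sk}(\Omega)$ and $\bz\in\bL^2(\Omega)$. Symmetry is immediate because $\beps(\cdot)$ and $\bbI$ are symmetric and $\cC_i$ preserves symmetry.

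For the total stress, take $\bw\in\mathbf{C}_c^\infty(\Omega)^d\subset\bV$ in \eqref{eq:rw-v}. Using $\wt a_1(\bu_0,\bw)+\int_0^t\wt a_1(\bv,\bw)\ds=(\bsigma_1,\beps(\bw))_\Omega$ and $(\alpha(\varphi),\vdiv\bw)_\Omega=(\alpha(\varphi)\bbI,\bnabla\bw)_\Omega$, the equation becomes
\[
\varrho(\dot\bv,\bw)_\Omega+(\bsigma_1+\bsigma_2,\bnabla\bw)_\Omega=(\varrho\bb,\bw)_\Omega .
\]
Here we use the symmetry of $\bsigma_1+\bsigma_2$ to replace $\beps(\bw)$ by $\bnabla\bw$. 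Hence, in distributions, $\bdiv(\bsigma_1+\bsigma_2)=\varrho\dot\bv-\varrho\bb$. This lies in $\bL^2(\Omega)$ for a.e.\ $t$ precisely because Theorem~\ref{thm:enhanced-regularity} gives $\dot\bv\in\LL^2(0,t_{\rm end};\bL^2(\Omega))$; this is where the extra data regularity is needed.

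The traction condition follows by returning to general $\bw\in\bV$ and applying the Green formula for $\bbH(\bdiv;\Omega)$. We obtain $\langle(\bsigma_1+\bsigma_2)\bn,\bw\rangle_{\partial\Omega}=0$ for all $\bw\in\bV$, which vanish on $\Gamma^\bu$, so $(\bsigma_1+\bsigma_2)\bn=\cero$ on $\Gamma^\bsigma$.

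The flux is handled the same way. Testing \eqref{eq:rw-phi} with $\psi\in C_c^\infty(\Omega)$ gives $\vdiv\bz=\dot\varphi-f(\varphi,\theta)-\beta\,\vdiv\bu$, which is in $\LL^2(\Omega)$ by Theorem~\ref{thm:enhanced-regularity}. Testing with general $\psi\in\rQ$ (zero on $\Gamma^\bsigma$) then yields $\bz\cdot\bn=0$ on $\Gamma^\bu$.

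\textbf{Equations.} The third equation follows from the identity for $\bdiv(\bsigma_1+\bsigma_2)$ above together with $(\bsigma_1+\bsigma_2,\eeta)_\Omega=0$ for skew $\eeta$, since the total stress is symmetric.

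The fifth equation follows from the identity for $\vdiv\bz$ and
\[
\vdiv\bu=\tr\beps(\bu)=\tr(\cA_1\bsigma_1)=\tfrac{1}{2\mu_1+d\lambda_1}\tr\bsigma_1 .
\]
The sixth equation is \eqref{eq:rw-theta} itself.

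For the fourth equation, take $\br\in\bH_\star(\vdiv;\Omega)$. Then $(\diff^{-1}\bz,\br)_\Omega=(\nabla\varphi,\br)_\Omega=-(\varphi,\vdiv\br)_\Omega$. The boundary term vanishes because $\varphi=0$ on $\Gamma^\bsigma$ and $\br\cdot\bn=0$ on $\Gamma^\bu$.

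For the first two equations, take $\btau_i\in\bbH_\star(\bdiv;\Omega)$ and integrate by parts against $\bv\in\bV$ (zero on $\Gamma^\bu$, with $\btau_i\bn=\cero$ on $\Gamma^\bsigma$):
\[
(\bdiv\btau_i,\bv)_\Omega=-(\btau_i,\bnabla\bv)_\Omega=-(\btau_i,\beps(\bv)+\bomega)_\Omega .
\]
So the first equation reduces to $\cA_1\dot\bsigma_1=\beps(\bv)$. This holds because $\dot\bsigma_1=\cC_1\beps(\dot\bu)=\cC_1\beps(\bv)$, and it makes sense in $\LL^2(0,t_{\rm end};\bbL^2(\Omega))$ since $\bu\in\HH^1(0,t_{\rm end};\bV)$.

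The second equation reduces to $\cA_2\bsigma_2=\beps(\bv)-\frac{\alpha(\varphi)}{2\mu_2+d\lambda_2}\bbI$. This is the identity $\cA_2\bbI=\frac{1}{2\mu_2+d\lambda_2}\bbI$, checked directly from the formula for $\cA_2$.

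\textbf{Main obstacle.} The delicate step is justifying the $\bbH(\bdiv)$ and $\bH(\vdiv)$ regularity together with the boundary conditions in the correct trace sense. This needs the enhanced time regularity $\dot\bv,\dot\varphi\in\LL^2(\LL^2)$ from Theorem~\ref{thm:enhanced-regularity}. It also needs care that the Green formula pairing on the partial boundary is in $\HH^{-1/2}$ versus $\HH^{1/2}_{00}$, with the boundary conditions understood weakly through testing with $\bV$ and $\rQ$.
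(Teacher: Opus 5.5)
Your proposal is correct and follows essentially the same route as the paper. You verify the first, second and fourth equations by integration by parts using $\bomega=\bnabla\bv-\beps(\bv)$, $\cA_1\dot\bsigma_1=\beps(\bv)$ and $\cA_2\bbI=\frac{1}{2\mu_2+d\lambda_2}\bbI$, and you read $\bdiv(\bsigma_1+\bsigma_2)=\varrho\dot\bv-\varrho\bb$ and $\vdiv\bz=\dot\varphi-f(\varphi,\theta)-\beta\,\vdiv\bu$ distributionally off \eqref{eq:reduced-weak}, with Theorem~\ref{thm:enhanced-regularity} giving the $\bbH(\bdiv)$ and $\bH(\vdiv)$ regularity. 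Your explicit testing with compactly supported functions, followed by the Green-formula derivation of the traction and flux conditions, just spells out what the paper summarises as ``the natural boundary conditions in \eqref{eq:reduced-weak}''.
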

\begin{proof}
From the definitions of the mixed variables (cf.~\eqref{eq:reconstructed-mixed-variables}) and the regularity obtained from Theorem~\ref{thm:global}, we immediately see that $\bsigma_1(t),\bsigma_2(t) \in \bbL_{\mathrm{sym}}^2(\Omega)$, $\bomega(t) \in \bbL_{\sk}^2(\Omega)$, and $\bz(t) \in \bL^2(\Omega)$.

We now establish the remaining regularity properties and verify that each equation in~\eqref{eq:mixed-weak} is satisfied. We begin by using that $\cA_i=\cC_i^{-1}$ to obtain
\begin{equation}\label{eq:mixed-reconstructions-proof-1}
\cA_1\dot{\bsigma}_1 = \beps(\bv) \qan \cA_2\bsigma_2 = \beps(\bv) - \cA_2(\alpha(\varphi)\,\bbI) = \beps(\bv) - \frac{\alpha(\varphi)}{2\mu_2+d\lambda_2}\, \bbI\, .
\end{equation}
The first expression in~\eqref{eq:mixed-reconstructions-proof-1} combined with the fact that $\bomega = \bnabla \bv - \beps(\bv)$, yields
\begin{gather*}
(\cA_1\dot{\bsigma}_1,\btau_1)_{\Omega} + (\bdiv\, \btau_1,\bv)_{\Omega} + (\btau_1,\bomega)_{\Omega} = (\beps(\bv),\btau_1)_\Omega + (\bdiv (\btau_1),\bv)_\Omega + (\btau_1,\bnabla \bv - \beps(\bv))_\Omega\\
=(\bdiv(\btau_1),\bv)_\Omega + (\btau_1,\bnabla \bv)_\Omega = 0\, ,
\end{gather*}
for all $\btau_1\in \bbH_\star(\bdiv;\Omega)$, where the last equality comes from an application of integration by parts, where the boundary contribution vanishes owing to that $\btau_1\in \bbH_\star(\bdiv;\Omega)$ and $\bv(t) \in \bV$ for a.e. $t\in (0,t_{\rm end}]$. This proves that the first equality in~\eqref{eq:mixed-weak} holds.

Likewise, using the second expression in~\eqref{eq:mixed-reconstructions-proof-1}, integration by parts, and once again the identity of the spin tensor, we can assert that 
\begin{align*}
&(\cA_2\bsigma_2,\btau_2)_{\Omega} + (\bdiv\, \btau_2,\bv)_{\Omega} + (\btau_2,\bomega)_{\Omega} \\
&\quad = (\beps(\bv),\btau_2)_\Omega - \frac{1}{2\mu_2+d\lambda_2}\, (\alpha(\varphi),\tr(\btau_2))_\Omega + (\bdiv(\btau_2),\bv)_\Omega + (\btau_2,\bnabla \bv - \beps(\bv))_\Omega \\
&\quad = - \frac{1}{2\mu_2+d\lambda_2}\, (\alpha(\varphi),\tr(\btau_2))_\Omega \, ,
\end{align*} 
for all $\btau_2\in \bbH_\star(\bdiv;\Omega)$. This gives us the second row in~\eqref{eq:mixed-weak}.

Since $(\bv,\varphi,\theta)$ is a solution of~\eqref{eq:reduced-weak}, it satisfies~\eqref{eq:reduced-strong} in the sense of distributions. Taking this equality and the definitions of the mixed variables (cf.~\eqref{eq:reconstructed-mixed-variables}) and that of the displacement (cf.~\eqref{eq:reconstructed-displacement}), it holds
\begin{equation*}
\bdiv(\bsigma_1+\bsigma_2) = \varrho\, \dot{\bv} - \varrho\, \bb\, .
\end{equation*}
Since $\dot{\bv}\in \LL^2(0,t_{\rm end};\bL^2(\Omega))$ (cf. Theorem~\ref{thm:enhanced-regularity}) and $\bb\in \LL^2(0,t_{\rm end};\bL^2(\Omega))$, we deduce that $\bdiv(\bsigma_1+\bsigma_2)(t) \in \bL^2(\Omega)$ for a.e. $t\in  (0,t_{\rm end}]$. Hence, $\bsigma_1(t)+\bsigma_2(t) \in \bbH(\bdiv;\Omega)$. Moreover, from the natural boundary conditions in~\eqref{eq:reduced-weak}, it follows that $(\bsigma_1+\bsigma_2)\bn = \cero$ on $\Gamma^\bsigma$. Thus, $\bsigma_1(t)+\bsigma_2(t) \in \bbH_\star(\bdiv;\Omega)$. Then, by letting $\bw\in \bL^2(\Omega)$ and $\eeta \in \bbL_{\sk}^2(\Omega)$, and using that $\bsigma_1$ and $\bsigma_2$ are symmetric,
\begin{equation*}
(\varrho\, \dot{\bv},\bw)_\Omega - (\bdiv(\bsigma_1+\bsigma_2),\bw)_\Omega - (\bsigma_1+\bsigma_2,\eeta)_\Omega = (\varrho\, \dot{\bv},\bw)_\Omega - (\varrho\, \dot{\bv} - \varrho\, \bb,\bw)_\Omega = (\varrho\, \bb,\bw)_\Omega \, .
\end{equation*}
This shows that the third row in~\eqref{eq:mixed-weak} holds.

By the definition of $\bz$ (cf.~\eqref{eq:reconstructed-mixed-variables}) and that $\varphi(t)\in \rQ$, an application of integration by parts shows that
\begin{equation*}
(\diff^{-1}\bz,\br)_\Omega + (\vdiv(\br),\varphi)_\Omega = (\nabla\varphi,\br)_\Omega - (\br,\nabla \varphi)_\Omega = 0\, ,
\end{equation*}
for all $\br \in \bH_\star(\vdiv;\Omega)$, thereby proving that the fourth equation in~\eqref{eq:mixed-weak} holds.

On the other hand, by the definition of $\bz$ (cf.~\eqref{eq:reconstructed-mixed-variables}) and $\bu$ (cf.~\eqref{eq:reconstructed-displacement}), equation~\eqref{eq:rw-phi}, which is the weak version of the second row of~\eqref{eq:reduced-strong}, and the definition of $\bsigma_1$ (cf.~\eqref{eq:reconstructed-mixed-variables}) together with the fact that $\cA_1=\cC_1^{-1}$, it follows that
\begin{equation*}
\dot{\varphi} - \vdiv(\bz) = f(\varphi,\theta) + \beta \, \vdiv(\bu) = f(\varphi,\theta) + \frac{\beta}{2\mu_1+d\lambda_1}\, \tr(\bsigma_1) \, ,
\end{equation*}
in the sense of distributions. This combined with the fact that $\dot{\varphi}\in \LL^2(0,t_{\rm end};\LL^2(\Omega))$ (cf. Theorem~\ref{thm:enhanced-regularity}), $f$ is globally Lipschitz continuous, $\varphi(t),\theta(t)\in \LL^2(\Omega)$, and $\bsigma_1(t)\in \bbL^2(\Omega)$, proves that $\bz(t) \in \bH(\vdiv;\Omega)$ and the fifth equation in~\eqref{eq:mixed-weak}. Furthermore, the boundary conditions in~\eqref{eq:reduced-weak} imply that $\bz(t) \in \bH_\star(\vdiv;\Omega)$.

Finally, the last equation in~\eqref{eq:mixed-weak} readily follows from~\eqref{eq:rw-theta}. This completes the proof.
\end{proof}

The following result shows that the solution obtained from the integro-differential system analysed in Sections~\ref{sec:integro-diff-primal} and~\ref{sec:well-posedness-reduced-problem}, together with the mixed variables introduced at the beginning of this section, not only solves~\eqref{eq:mixed-weak} (cf. Theorem~\ref{thm:mixed-reconstructions}), but is also the unique solution to the mixed system.

\begin{theorem}[Stability and uniqueness]\label{thm:mixed-stability}
Assume the hypotheses of Theorem~\ref{thm:mixed-reconstructions}. For $i\in\{1,2\}$, let $(\bsigma_1^i,\bsigma_2^i,\bv^i,\bomega^i,\bz^i,\varphi^i,\theta^i)$ be a solution to~\eqref{eq:mixed-weak} associated with the body force $\bb^i$ and the initial data $\bsigma_1^i(0),\bv_0^i,\varphi_0^i,\theta_0^i$. Then there exists $C>0$, depending only on $\cA_1$, $\cA_2$, $\diff$, $\varrho$, the Lipschitz constants of $\alpha$, $f$, $g$, the Lam\'e parameters, $\beta$, $d$, and $t_{\rm end}$, but independent of the data, such that, writing $\delta(\bullet):=(\bullet)^1-(\bullet)^2$, for a.e. $t\in(0,t_{\rm end}]$
\begin{equation}\label{eq:mixed-stability}
\begin{aligned}
&\|\delta\bsigma_1\|_{\LL^\infty(0,t;\bbL^2(\Omega))}^2 + \|\delta\bv\|_{\LL^\infty(0,t;\bL^2(\Omega))}^2 + \|\delta\varphi\|_{\LL^\infty(0,t;\LL^2(\Omega))}^2 + \|\delta\theta\|_{\LL^\infty(0,t;\LL^2(\Omega))}^2\\
&\qquad + \|\delta\bsigma_2\|_{\LL^2(0,t;\bbL^2(\Omega))}^2 + \|\delta\bz\|_{\LL^2(0,t;\bL^2(\Omega))}^2 \\
&\quad \leq C\,\Big( \|\delta\bsigma_1(0)\|_{\bbL^2(\Omega)}^2 + \|\delta\bv_0\|_{\bL^2(\Omega)}^2 + \|\delta\varphi_0\|_{\LL^2(\Omega)}^2 + \|\delta\theta_0\|_{\LL^2(\Omega)}^2 + \|\delta\bb\|_{\LL^2(0,t;\bL^2(\Omega))}^2 \Big)\, .
\end{aligned}
\end{equation}
In particular, problem~\eqref{eq:mixed-weak} admits at most one solution.
\end{theorem}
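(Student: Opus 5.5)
We will derive an energy estimate for the difference system directly from~\eqref{eq:mixed-weak}, without going back to the reduced problem. Subtracting the two instances of~\eqref{eq:mixed-weak}, the differences $(\delta\bsigma_1,\delta\bsigma_2,\delta\bv,\delta\bomega,\delta\bz,\delta\varphi,\delta\theta)$ satisfy the same linear system. The right-hand sides are replaced by $-\frac{1}{2\mu_2+d\lambda_2}(\alpha(\varphi^1)-\alpha(\varphi^2),\tr\btau_2)_\Omega$, $(\varrho\,\delta\bb,\bw)_\Omega$, $(f(\varphi^1,\theta^1)-f(\varphi^2,\theta^2),\psi)_\Omega+\frac{\beta}{2\mu_1+d\lambda_1}(\tr\delta\bsigma_1,\psi)_\Omega$, and the analogous $g$-difference.

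\textbf{Step 1: mechanical energy.} Test the first two rows with $\btau_1=\btau_2=\delta\bsigma_1+\delta\bsigma_2$; this is admissible since only the sum lies in $\bbH_\star(\bdiv;\Omega)$, and the subtlety that the individual stresses are merely $\bbL^2$ is exactly why we use the common test function. Test the third row with $\bw=\delta\bv$, $\eeta=\delta\bomega$. Adding, the terms $(\bdiv(\delta\bsigma_1+\delta\bsigma_2),\delta\bv)$ and $(\delta\bsigma_1+\delta\bsigma_2,\delta\bomega)$ cancel, giving
\[
(\cA_1\delta\dot\bsigma_1,\delta\bsigma_1+\delta\bsigma_2)_\Omega+(\cA_2\delta\bsigma_2,\delta\bsigma_1+\delta\bsigma_2)_\Omega+\tfrac{\varrho}{2}\partial_t\|\delta\bv\|^2_{\bL^2(\Omega)}=\text{RHS}.
\]
The cross terms $(\cA_1\delta\dot\bsigma_1,\delta\bsigma_2)$ and $(\cA_2\delta\bsigma_2,\delta\bsigma_1)$ do not have a sign. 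To remove them, we first subtract the first two rows tested with a common $\btau$: this yields $(\cA_1\delta\dot\bsigma_1-\cA_2\delta\bsigma_2,\btau)_\Omega=\frac{1}{2\mu_2+d\lambda_2}(\delta\alpha,\tr\btau)_\Omega$ for all $\btau\in\bbH_\star(\bdiv;\Omega)$. Since $\bbH_\star(\bdiv;\Omega)$ is dense in $\bbL^2(\Omega)$ (it contains $C_c^\infty$), this gives the pointwise identity $\cA_1\delta\dot\bsigma_1=\cA_2\delta\bsigma_2+\frac{\delta\alpha}{2\mu_2+d\lambda_2}\bbI$ in $\bbL^2(\Omega)$. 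Substituting it, the left-hand side becomes
\[
\tfrac12\partial_t(\cA_1\delta\bsigma_1,\delta\bsigma_1)_\Omega+(\cA_2\delta\bsigma_2,\delta\bsigma_2)_\Omega+\tfrac{\varrho}{2}\partial_t\|\delta\bv\|^2
\]
plus controllable terms of the type $(\delta\alpha,\tr\delta\bsigma_i)$. These are bounded by Young's inequality using $\|\delta\alpha\|\le L_\alpha\|\delta\varphi\|$, with $\|\delta\bsigma_2\|^2$ absorbed by the coercivity of $\cA_2$ and $\|\delta\bsigma_1\|^2$ left for Gr\"onwall. The time regularity needed to write $\partial_t(\cA_1\delta\bsigma_1,\delta\bsigma_1)$ follows from $\dot\bsigma_1\in\bbL^2$ (the first row holds with $\cA_1\dot{\bsigma}_1$ in $\bbL^2$).

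\textbf{Step 2: chemical energy.} Test the fourth row with $\br=\delta\bz$ and the fifth with $\psi=\delta\varphi$, and add; the divergence terms cancel, leaving $\frac12\partial_t\|\delta\varphi\|^2+(\diff^{-1}\delta\bz,\delta\bz)$ on the left. The right-hand side is bounded via~\eqref{eq:assumption-f} by $C(\|\delta\varphi\|^2+\|\delta\theta\|^2+\|\delta\bsigma_1\|^2)$. Testing the last row with $\delta\theta$ and using~\eqref{eq:assumption-g} gives $\frac12\partial_t\|\delta\theta\|^2\le C(\|\delta\varphi\|^2+\|\delta\theta\|^2)$. The load term is bounded by $\varrho(\|\delta\bb\|^2+\|\delta\bv\|^2)$.

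\textbf{Step 3: closing the estimate.} Sum the three inequalities, integrate over $(0,t)$, and use the uniform bounds $\cA_1\ge c>0$ and $\diff^{-1}\ge\overline{D}^{-1}$. Set $E(t)=\|\delta\bsigma_1\|^2+\|\delta\bv\|^2+\|\delta\varphi\|^2+\|\delta\theta\|^2$. Then
\[
E(t)+\int_0^t\big(\|\delta\bsigma_2\|^2+\|\delta\bz\|^2\big)\le C\Big(E(0)+\|\delta\bb\|^2_{\LL^2(0,t;\bL^2(\Omega))}+\int_0^tE\Big).
\]
Gr\"onwall's lemma, with constant $e^{Ct_{\rm end}}$, yields~\eqref{eq:mixed-stability}. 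Uniqueness follows by taking identical data.

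The main obstacle is Step 1: eliminating the indefinite cross terms between $\delta\bsigma_1$ and $\delta\bsigma_2$, which arise because each stress is separately only $\bbL^2$. The pointwise constitutive identity obtained by density resolves this, and it must be justified carefully.
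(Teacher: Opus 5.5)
Your density identity $\cA_1\delta\dot\bsigma_1=\cA_2\delta\bsigma_2+\frac{\delta\alpha}{2\mu_2+d\lambda_2}\bbI$ in $\bbL^2(\Omega)$ is the right tool. It is what justifies the paper's test with $(\btau_1,\btau_2)=(\delta\bsigma_1,\delta\bsigma_2)$, which the paper performs formally although only the sum $\delta\bsigma_1+\delta\bsigma_2$ is known to lie in $\bbH_\star(\bdiv;\Omega)$. However, the bookkeeping in Step~1 is wrong, and as written the estimate does not close.

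With $\btau_1=\btau_2=\delta\bsigma:=\delta\bsigma_1+\delta\bsigma_2$, each of the first two rows contributes $+\vb(\delta\bsigma,[\delta\bv,\delta\bomega])$. The third row, tested with $[\delta\bv,\delta\bomega]$, contributes it only once, with a minus sign. So after adding, the term $\vb(\delta\bsigma,[\delta\bv,\delta\bomega])=(\bdiv\,\delta\bsigma,\delta\bv)_\Omega+(\delta\bsigma,\delta\bomega)_\Omega$ survives, and neither $\bdiv\,\delta\bsigma$ nor $\delta\bomega$ is controlled by your energy. The same mismatch appears if you carry out the substitution correctly. Inserting the identity into $(\cA_1\delta\dot\bsigma_1+\cA_2\delta\bsigma_2,\delta\bsigma)_\Omega$ gives $\partial_t(\cA_1\delta\bsigma_1,\delta\bsigma_1)_\Omega+2(\cA_2\delta\bsigma_2,\delta\bsigma_2)_\Omega$ plus $\delta\alpha$-terms. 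That is twice what you wrote, set against a single $\frac{\varrho}{2}\partial_t\|\delta\bv\|^2_{\bL^2(\Omega)}$.

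The repair is to use only one of the first two rows. The first row with $\btau_1=\delta\bsigma$, plus the third row, gives with exact cancellation $(\cA_1\delta\dot\bsigma_1,\delta\bsigma_1+\delta\bsigma_2)_\Omega+\frac{\varrho}{2}\partial_t\|\delta\bv\|^2_{\bL^2(\Omega)}=(\varrho\,\delta\bb,\delta\bv)_\Omega$. Then apply the pointwise identity only to the cross term: $(\cA_1\delta\dot\bsigma_1,\delta\bsigma_2)_\Omega=(\cA_2\delta\bsigma_2,\delta\bsigma_2)_\Omega+\frac{1}{2\mu_2+d\lambda_2}(\delta\alpha,\tr\delta\bsigma_2)_\Omega$, which is legitimate because it is an identity in $\bbL^2(\Omega)$. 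This reproduces exactly the mechanical part of the paper's identity $\partial_t\sE+(\cA_2\delta\bsigma_2,\delta\bsigma_2)_\Omega+(\diff^{-1}\delta\bz,\delta\bz)_\Omega=\sR$. Equivalently, you could keep both rows and test the third with $[2\delta\bv,2\delta\bomega]$. With this correction, your Steps~2 and~3 coincide with the paper's argument.

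Second, uniqueness does not follow from the stability estimate alone, because its left-hand side contains no norm of the spin. Identical data give $\delta\bsigma_1=\delta\bsigma_2=\delta\bv=\delta\bz=\cero$ and $\delta\varphi=\delta\theta=0$, but not $\delta\bomega=\cero$. You must return to the first row, which then reduces to $(\btau_1,\delta\bomega)_\Omega=0$ for all $\btau_1\in\bbH_\star(\bdiv;\Omega)$. The same density of $\bbH_\star(\bdiv;\Omega)$ in $\bbL^2(\Omega)$ then gives $\delta\bomega=\cero$, as in the paper.
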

\begin{proof}
Subtracting the two systems~\eqref{eq:mixed-weak}, adding the first two resulting equations having as test function the pair $(\btau_1,\btau_2)$, and testing the resulting equations with their own unknowns, namely with $(\btau_1,\btau_2)=(\delta\bsigma_1,\delta\bsigma_2)$,  $(\bw,\eeta)=(\delta\bv,\delta\bomega)$, $\br=\delta\bz$, $\psi=\delta\varphi$, and $\vartheta=\delta\theta$, the coupling contributions cancel pairwise: the terms $(\bdiv\,(\delta\btau_1+\delta \btau_2),\delta\bv)$ and $(\delta\btau_1 + \delta \btau_2,\delta\bomega)$ are removed by $-(\bdiv(\delta\bsigma_1+\delta\bsigma_2),\delta\bv)$ and $-(\delta\bsigma_1+\delta\bsigma_2,\delta\bomega)$, and likewise $(\vdiv\,\delta\bz,\delta\varphi)$ from the fourth cancels with $-(\vdiv\,\delta\bz,\delta\varphi)$ from the fifth. Introducing the energy
\begin{equation*}
\sE(t):=\fracd{1}{2}\,(\cA_1\delta\bsigma_1,\delta\bsigma_1)_\Omega + \fracd{\varrho}{2}\,\|\delta\bv\|_{\bL^2(\Omega)}^2 + \fracd{1}{2}\,\|\delta\varphi\|_{\LL^2(\Omega)}^2 + \fracd{1}{2}\,\|\delta\theta\|_{\LL^2(\Omega)}^2\, ,
\end{equation*}
and using that $\cA_1$ is symmetric and time-independent, the surviving identities add up to
\begin{equation*}
\partial_t\sE + (\cA_2\delta\bsigma_2,\delta\bsigma_2)_\Omega + (\diff^{-1}\delta\bz,\delta\bz)_\Omega = \sR\, ,
\end{equation*}
where, abbreviating $\delta f := f(\varphi^1,\theta^1)-f(\varphi^2,\theta^2)$ and $\delta g := g(\varphi^1,\theta^1)-g(\varphi^2,\theta^2)$,
\begin{align*}
\sR &:= -\fracd{1}{2\mu_2+d\lambda_2}\,(\alpha(\varphi^1)-\alpha(\varphi^2),\tr\delta\bsigma_2)_\Omega + (\varrho\,\delta\bb,\delta\bv)_\Omega + (\delta f,\delta\varphi)_\Omega \\
&\qquad + \fracd{\beta}{2\mu_1+d\lambda_1}\,(\tr\delta\bsigma_1,\delta\varphi)_\Omega + (\delta g,\delta\theta)_\Omega\, .
\end{align*}
We bound $\sR$ term by term using the Cauchy--Schwarz and Young inequalities, the bound $\|\tr\btau\|_{\LL^2(\Omega)}\leq\sqrt{d}\,\|\btau\|_{\bbL^2(\Omega)}$, the Lipschitz continuity of $\alpha$, $f$, $g$ (cf.~\eqref{eq:alpha-Lipschitz},~\eqref{eq:assumption-f},~\eqref{eq:assumption-g}), and the uniform positive definiteness of $\cA_1$, which gives $\|\delta\bsigma_1\|_{\bbL^2(\Omega)}^2\leq \underline{a}_1^{-1}(\cA_1\delta\bsigma_1,\delta\bsigma_1)_\Omega\leq 2\,\underline{a}_1^{-1}\sE$, with $\underline{a}_1>0$ depending only on $\mu_1$ and $\lambda_1$. The first term is absorbed into the dissipation $(\cA_2\delta\bsigma_2,\delta\bsigma_2)_\Omega\geq\underline{a}_2\|\delta\bsigma_2\|_{\bbL^2(\Omega)}^2$, while all remaining contributions are controlled by $\sE$ and the load. Hence
\begin{equation}\label{eq:mixed-stability-proof-2}
\partial_t\sE + \fracd{\underline{a}_2}{2}\,\|\delta\bsigma_2\|_{\bbL^2(\Omega)}^2 + \overline{D}^{-1}\|\delta\bz\|_{\bL^2(\Omega)}^2 \leq C_0\,\sE + \fracd{\varrho}{2}\,\|\delta\bb\|_{\bL^2(\Omega)}^2\, ,
\end{equation}
with $C_0>0$ depending on the same quantities as $C$ but independent of $t_{\rm end}$. Dropping the (nonnegative) dissipation terms and integrating, Gr\"onwall's inequality yields, for a.e. $t\in(0,t_{\rm end}]$,
\begin{equation*}
\sE(t) \leq e^{C_0 t}\,\Big( \sE(0) + \fracd{\varrho}{2}\,\|\delta\bb\|_{\LL^2(0,t;\bL^2(\Omega))}^2 \Big)\, ,
\end{equation*}
which, recalling the definition of $\sE$ and that $\sE(0)$ is bounded by the initial data on the right-hand side of~\eqref{eq:mixed-stability}, controls the four $\LL^\infty$-norms therein. Integrating~\eqref{eq:mixed-stability-proof-2} once more over $(0,t)$ without discarding the dissipative terms, and bounding $\int_0^t\sE(s)\ds$ by the previous estimate, recovers the bounds on $\|\delta\bsigma_2\|_{\LL^2(0,t;\bbL^2(\Omega))}^2$ and $\|\delta\bz\|_{\LL^2(0,t;\bL^2(\Omega))}^2$, thereby proving~\eqref{eq:mixed-stability}.

Finally, if the two solutions share the same data, then the right-hand side of~\eqref{eq:mixed-stability} vanishes, whence $\delta\bsigma_1=\delta\bsigma_2=\delta\bv=\delta\varphi=\delta\theta=\cero$ and $\delta\bz=\cero$. The remaining variable follows from the first row of~\eqref{eq:mixed-weak}: with $\delta\bsigma_1=\cero$ and $\delta\bv=\cero$ it reduces to $(\btau_1,\delta\bomega)_\Omega=0$ for all $\btau_1\in\bbH_\star(\bdiv;\Omega)$, and the density of $\bbH_\star(\bdiv;\Omega)$ in $\bbL^2(\Omega)$ forces $\delta\bomega=\cero$. This proves uniqueness.
\end{proof}

\section{Finite element discretisation}\label{sec:FE}
 
In this section, we construct a conforming finite element discretisation of~\eqref{eq:mixed-weak} and establish its well-posedness in the semidiscrete (continuous-in-time) setting. To this end, we first introduce some preliminary notation. Let $\{\cT_h\}_{h>0}$ be a shape-regular family of meshes partitioning $\Omega$ into triangles or tetrahedra (in 2D and 3D, respectively) of diameter $h_K$, with mesh size $h := \max\{h_K: K\in \cT_h\}$.
 
We next introduce the semidiscrete continuous-in-time formulation associated with~\eqref{eq:mixed-weak}. We define the bilinear forms $\va^i:\bbL^2(\Omega)\times\bbL^2(\Omega)\to\R$, $\vb:\bbH_\star(\bdiv;\Omega)\times[\bL^2(\Omega)\times\bbL^2_{\sk}(\Omega)]\to\R$, $\ca:\bL^2(\Omega)\times\bL^2(\Omega)\to\R$, and $\cb:\bH_\star(\vdiv;\Omega)\times\LL^2(\Omega)\to\R$ as
\begin{gather*}
\va^i(\bzeta,\btau) := (\cA_i\,\bzeta,\btau)_\Omega\, , \qquad
\vb(\btau,[\bw,\eeta]) := (\bdiv\,\btau,\bw)_\Omega + (\btau,\eeta)_\Omega\, , \\[1ex]
\ca(\bz,\br) := (\diff^{-1}\bz,\br)_\Omega\, , \qan
\cb(\br,\psi) := (\vdiv\,\br,\psi)_\Omega\, .
\end{gather*}
Furthermore, we set conforming finite element subspaces
\begin{gather*}
\bbS_h \subseteq \bbH_\star(\bdiv;\Omega)\, , \quad \bV_h \subseteq \bL^2(\Omega)\, , \quad \bbK_h \subseteq \bbL^2_{\mathrm{skew}}(\Omega)\, , \\[1ex]
\bZ_h \subseteq \bH_\star(\vdiv;\Omega)\, , \quad \Phi_h \subseteq \LL^2(\Omega)\, , \qan \Theta_h \subseteq \LL^2(\Omega)\, ,
\end{gather*}
and assume the following hypothesis:

\noindent \textbf{(H1)} There exists a constant $\beta_{\rd}>0$, independent of $h$, such that
\begin{equation*}
\sup_{\cero \neq \btau_h \in \bbS_h}\frac{\vb(\btau_h, [\bw_h,\eeta_h])}{\|\btau_h\|_{\bbH(\bdiv;\Omega)}} \geq \beta_{\rd}\, \| (\bw_h, \eeta_h) \|_{\bL^2(\Omega)\times \bbL^2(\Omega)}  \qquad \forall\, [\bw_h, \eeta_h] \in \bV_h \times \bbK_h \, .
\end{equation*}
 

In this way, the semidiscrete scheme consists of finding
\begin{equation}\label{eq:semidiscrete-unknowns}
\begin{aligned}
&(\bsigma_{1,h},\bsigma_{2,h},\bv_h,\bomega_h,\bz_h,\varphi_h,\theta_h) \\
&\quad \in C^1([0,t_{\rm end}];\bbS_h) \times C^0([0,t_{\rm end}];\bbS_h) \times C^1([0,t_{\rm end}];\bV_h) \times C^0([0,t_{\rm end}];\bbK_h) \\
&\qquad \times C^0([0,t_{\rm end}];\bZ_h) \times C^1([0,t_{\rm end}];\Phi_h) \times C^1([0,t_{\rm end}];\Theta_h)\, ,
\end{aligned}
\end{equation}
satisfying the initial conditions $\bsigma_{1,h}(0)=\bsigma_{1,h}^0$, $\bv_h(0)=\bv_h^0$, $\varphi_h(0)=\varphi_h^0$, $\theta_h(0)=\theta_h^0$, where $\bsigma_{1,h}^0$, $\bv_h^0$, $\varphi_h^0$, and $\theta_h^0$ are suitable approximations of the corresponding initial data (specified below), such that, for all $t\in(0,t_{\rm end}]$,
\begin{equation}\label{eq:semidiscrete}
\begin{array}{rcl}
\va^1(\dot{\bsigma}_{1,h},\btau_{1,h}) + \vb(\btau_{1,h},[\bv_h,\bomega_h]) & = & 0 \, , \\[1.25ex]
\va^2(\bsigma_{2,h},\btau_{2,h}) + \vb(\btau_{2,h},[\bv_h,\bomega_h]) & = & -\frac{1}{2\mu_2+d\lambda_2}\, (\alpha(\varphi_h),\tr(\btau_{2,h}))_\Omega \, , \\[1.25ex]
(\varrho\,\dot{\bv}_h,\bw_h)_\Omega - \vb(\bsigma_{1,h}+\bsigma_{2,h},[\bw_h,\eeta_h]) & = & (\varrho\,\bb,\bw_h)_\Omega \, , \\[1.25ex]
\ca(\bz_h,\br_h) + \cb(\br_h,\varphi_h) & = & 0 \, , \\[1.25ex]
(\dot{\varphi}_h,\psi_h)_\Omega - \cb(\bz_h,\psi_h) & = & \left(f(\varphi_h,\theta_h),\psi_h\right)_\Omega + \tfrac{\beta}{2\mu_1+d\lambda_1}(\tr(\bsigma_{1,h}),\psi_h)_\Omega \, , \\[1.25ex]
(\dot{\theta}_h,\vartheta_h)_\Omega & = & (g(\varphi_h,\theta_h),\vartheta_h)_\Omega \, ,
\end{array}
\end{equation}
for all $(\btau_{1,h},\btau_{2,h},[\bw_h,\eeta_h],\br_h,\psi_h,\vartheta_h)\in \bbS_h\times\bbS_h\times[\bV_h\times\bbK_h]\times\bZ_h\times\Phi_h\times\Theta_h$.
 
The discrete initial data are chosen as projections of the corresponding continuous ones. Since $\va^1$ is an inner product on $\bbL^2(\Omega)$, we let $\bsigma_{1,h}^0\in\bbS_h$ be the $\va^1$-orthogonal projection of $\bsigma_1(0)=\cC_1\beps(\bu_0)$, namely
\begin{equation}\label{eq:discrete-initial-data}
\va^1\big(\bsigma_{1,h}^0,\btau_h\big) = \va^1\big(\cC_1\beps(\bu_0),\btau_h\big) \qquad \forall\, \btau_h \in \bbS_h\, ,
\end{equation}
which is well defined because $\bu_0\in\bH^1(\Omega)$ implies $\cC_1\beps(\bu_0)\in\bbL^2(\Omega)$, and we let $\bv_h^0$, $\varphi_h^0$, and $\theta_h^0$ be the $\bL^2(\Omega)$- and $\LL^2(\Omega)$-orthogonal projections of $\bv_0$, $\varphi_0$, and $\theta_0$ onto $\bV_h$, $\Phi_h$, and $\Theta_h$, respectively. No initial datum is prescribed for $\bsigma_{2,h}$, $\bomega_h$, and $\bz_h$, as these are determined by the remaining unknowns through the algebraic relations in~\eqref{eq:semidiscrete}.
 
We now rewrite~\eqref{eq:semidiscrete} as a differential-algebraic system for the time-dependent coordinate vectors of the discrete unknowns. Let
\begin{gather*}
\big\{\btau^{(j)}\big\}_{j=1}^{n_\bsigma} \ \text{be a basis of } \bbS_h\, , \quad
\big\{\bw^{(j)}\big\}_{j=1}^{n_\bv} \ \text{a basis of } \bV_h\, , \quad
\big\{\eeta^{(j)}\big\}_{j=1}^{n_\bomega} \ \text{a basis of } \bbK_h\, , \\[0.4ex]
\big\{\br^{(j)}\big\}_{j=1}^{n_\bz} \ \text{a basis of } \bZ_h\, , \quad
\big\{\psi^{(j)}\big\}_{j=1}^{n_\varphi} \ \text{a basis of } \Phi_h\, , \quad
\big\{\vartheta^{(j)}\big\}_{j=1}^{n_\theta} \ \text{a basis of } \Theta_h\, .
\end{gather*}
Note that $\bsigma_{1,h}$ and $\bsigma_{2,h}$ are expanded in the same basis of $\bbS_h$. We expand each discrete field in its corresponding basis, with time-dependent coefficients,
\begin{gather*}
\bsigma_{i,h}(t) = \sum_{j=1}^{n_\bsigma} \sigma_{i,j}(t)\, \btau^{(j)}\quad (i\in \{1,2\})\, , \quad 
\bv_h(t) = \sum_{j=1}^{n_\bv} v_{j}(t)\, \bw^{(j)}\, , \quad \bomega_h(t) = \sum_{j=1}^{n_\bomega} \omega_j(t)\, \eeta^{(j)}\, ,\\
\bz_h(t) = \sum_{j=1}^{n_\bz} z_j(t)\, \br^{(j)}\, , \quad
\varphi_h(t) = \sum_{j=1}^{n_\varphi} \varphi_j(t)\, \psi^{(j)}\, , \qan
\theta_h(t) = \sum_{j=1}^{n_\theta} \theta_j(t)\, \vartheta^{(j)}\, , 
\end{gather*}
where the scalar coefficients are collected into the time-dependent vectors $\coef{\bsigma}_i=(\sigma_{i,j})_{j=1}^{n_\bsigma}\in\R^{n_\bsigma}$ $(i\in\{1,2\})$, $\coef{\bv}=(v_j)_{j=1}^{n_\bv}\in\R^{n_\bv}$, $\coef{\bomega}=(\omega_j)_{j=1}^{n_\bomega}\in\R^{n_\bomega}$, $\coef{\bz}=(z_j)_{j=1}^{n_\bz}\in\R^{n_\bz}$, $\coef{\varphi}=(\varphi_j)_{j=1}^{n_\varphi}\in\R^{n_\varphi}$, and $\coef{\theta}=(\theta_j)_{j=1}^{n_\theta}\in\R^{n_\theta}$.
 
Testing~\eqref{eq:semidiscrete} against the basis functions and invoking the bilinearity of $\va^i$, $\vb$, $\ca$, $\cb$ and of the $\LL^2$-inner products, every linear term becomes a matrix acting on a coefficient vector. Recalling the definitions of the bilinear forms, we introduce, for $i\in\{1,2\}$, the stress mass matrices and the two components of the pairing $\vb$,
\begin{equation*}
(\bA_i)_{k\ell} := \va^i\big(\btau^{(\ell)},\btau^{(k)}\big) \, , \quad
(\bB)_{k\ell} := \vb\big(\btau^{(\ell)},[\bw^{(k)},\cero]\big)\, , \quad
(\bE)_{k\ell} := \vb\big(\btau^{(\ell)},[\cero,\eeta^{(k)}]\big)\, ,
\end{equation*}
so that, by bilinearity, $\vb\big(\btau_h,[\bw_h,\eeta_h]\big) = \coef{\bw}^{\top}\bB\,\coef{\btau} + \coef{\eeta}^{\top}\bE\,\coef{\btau}$ for all $\btau_h\in\bbS_h$ and $[\bw_h,\eeta_h]\in\bV_h\times\bbK_h$, where $\coef{\btau}$, $\coef{\bw}$, and $\coef{\eeta}$ denote the corresponding coefficient vectors. For the reaction-diffusion block we set
\begin{equation*}
(\bC)_{k\ell} := \ca\big(\br^{(\ell)},\br^{(k)}\big) \, , \qquad
(\bK)_{k\ell} := \cb\big(\br^{(\ell)},\psi^{(k)}\big) \, ,
\end{equation*}
whence $\cb(\br_h,\psi_h) = \coef{\psi}^{\top}\bK\,\coef{\br}$.
Next, we introduce the symmetric and positive definite mass matrices
\begin{equation*}
(\bMv)_{k\ell} := (\bw^{(\ell)},\bw^{(k)})_\Omega\, , \quad
(\bMf)_{k\ell} := (\psi^{(\ell)},\psi^{(k)})_\Omega\, , \quad
(\bMt)_{k\ell} := (\vartheta^{(\ell)},\vartheta^{(k)})_\Omega\, ,
\end{equation*}
and the matrix coupling the elastic-stress trace into the calcium equation,
\begin{equation*}
(\bTc)_{k\ell} := \frac{\beta}{2\mu_1+d\lambda_1}\,(\tr(\btau^{(\ell)}),\psi^{(k)})_\Omega\, .
\end{equation*}
Finally, the model data and the nonlinear terms are collected, componentwise, in the vectors
\begin{equation*}
\begin{array}{ll}
\big(\coef{\bb}(t)\big)_k := (\varrho\,\bb(t),\bw^{(k)})_\Omega\, , &
\big(\coef{\ba}(\coef{\varphi})\big)_k := -\dfrac{1}{2\mu_2+d\lambda_2}\,(\alpha(\varphi_h),\tr(\btau^{(k)}))_\Omega\, , \\[2.2ex]
\big(\coef{\ff}(\coef{\varphi},\coef{\theta})\big)_k := (f(\varphi_h,\theta_h),\psi^{(k)})_\Omega\, , &
\big(\coef{\gg}(\coef{\varphi},\coef{\theta})\big)_k := (g(\varphi_h,\theta_h),\vartheta^{(k)})_\Omega\, .
\end{array}
\end{equation*}
By construction, $\bA_1$, $\bA_2$, $\bC$, $\bMv$, $\bMf$, and $\bMt$ are symmetric and positive definite (hence invertible), owing to the coercivity of $\va^i$, $\ca$, and of the $\LL^2$-inner products on the respective finite element spaces. Moreover, the maps $\coef{\ba}$, $\coef{\ff}$, $\coef{\gg}$ inherit the global Lipschitz continuity of $\alpha$, $f$, $g$ (cf.~\eqref{eq:alpha-Lipschitz},~\eqref{eq:assumption-f},~\eqref{eq:assumption-g}).
 
Requiring~\eqref{eq:semidiscrete} to hold for all basis functions is then equivalent to seeking the coefficient vectors $\big(\coef{\bsigma}_1,\coef{\bsigma}_2,\coef{\bv},\coef{\bomega},\coef{\bz},\coef{\varphi},\coef{\theta}\big):[0,t_{\rm end}]\to\R^{N}$, with $N=2n_\bsigma+n_\bv+n_\bomega+n_\bz+n_\varphi+n_\theta$, satisfying the differential-algebraic system
\begin{subequations}\label{eq:fem-dae}
\begin{align}
\bA_1\,\dot{\coef{\bsigma}}_1 + \bB^{\top}\coef{\bv} + \bE^{\top}\coef{\bomega} &= \cero\, , \label{eq:fem-dae-1}\\[0.3ex]
\bA_2\,\coef{\bsigma}_2 + \bB^{\top}\coef{\bv} + \bE^{\top}\coef{\bomega} &= \coef{\ba}(\coef{\varphi})\, , \label{eq:fem-dae-2}\\[0.3ex]
\varrho\,\bMv\,\dot{\coef{\bv}} - \bB\,(\coef{\bsigma}_1+\coef{\bsigma}_2) &= \coef{\bb}(t)\, , \label{eq:fem-dae-3}\\[0.3ex]
\bE\,(\coef{\bsigma}_1+\coef{\bsigma}_2) &= \cero\, , \label{eq:fem-dae-4}\\[0.3ex]
\bC\,\coef{\bz} + \bK^{\top}\coef{\varphi} &= \cero\, , \label{eq:fem-dae-5}\\[0.3ex]
\bMf\,\dot{\coef{\varphi}} - \bK\,\coef{\bz} - \bTc\,\coef{\bsigma}_1 &= \coef{\ff}(\coef{\varphi},\coef{\theta})\, , \label{eq:fem-dae-6}\\[0.3ex]
\bMt\,\dot{\coef{\theta}} &= \coef{\gg}(\coef{\varphi},\coef{\theta})\, , \label{eq:fem-dae-7}
\end{align}
\end{subequations}
subject to $\coef{\bsigma}_1(0)=\coef{\bsigma}_1^{\,0}$, $\coef{\bv}(0)=\coef{\bv}^{\,0}$, $\coef{\varphi}(0)=\coef{\varphi}^{\,0}$, $\coef{\theta}(0)=\coef{\theta}^{\,0}$, the coordinate representations of $\bsigma_{1,h}^0$, $\bv_h^0$, $\varphi_h^0$, $\theta_h^0$. Here~\eqref{eq:fem-dae-3} stems from testing the third row of~\eqref{eq:semidiscrete} against $[\bw_h,\cero]$, while~\eqref{eq:fem-dae-4} gathers its weak-symmetry component $[\cero,\eeta_h]$.
 
Note that~\eqref{eq:fem-dae-1},~\eqref{eq:fem-dae-3},~\eqref{eq:fem-dae-6}, and~\eqref{eq:fem-dae-7} are differential (in time), and~\eqref{eq:fem-dae-2},~\eqref{eq:fem-dae-4}, and~\eqref{eq:fem-dae-5} are algebraic and determine $\coef{\bsigma}_2$, $\coef{\bomega}$, and $\coef{\bz}$. Accordingly, we gather the differential unknowns into $\yvec:=(\coef{\bsigma}_1,\coef{\bv},\coef{\varphi},\coef{\theta})$ and the algebraic ones into $\xivec:=(\coef{\bsigma}_2,\coef{\bomega},\coef{\bz})$, and define the block-diagonal mass matrix
\begin{equation}\label{eq:fem-block-mass}
\blkM := \mathrm{diag}\big(\bA_1,\ \varrho\,\bMv,\ \bMf,\ \bMt\big)\, ,
\end{equation}
so that~\eqref{eq:fem-dae} adopts the compact block form
\begin{equation}\label{eq:fem-block-form}
\blkM\,\dot{\yvec}(t) + \blkH\,\yvec(t) + \blkS\,\xivec(t) = \coef{\bF}\big(t,\yvec(t)\big)\, , \qquad
\blkB\,\yvec(t) + \blkC\,\xivec(t) = \coef{\bG}\big(\yvec(t)\big)\, ,
\end{equation}
whose first (differential) block assembles~\eqref{eq:fem-dae-1},~\eqref{eq:fem-dae-3},~\eqref{eq:fem-dae-6},~\eqref{eq:fem-dae-7} and whose second (algebraic) block assembles~\eqref{eq:fem-dae-2},~\eqref{eq:fem-dae-4},~\eqref{eq:fem-dae-5}. Ordering $\yvec=(\coef{\bsigma}_1,\coef{\bv},\coef{\varphi},\coef{\theta})$ and $\xivec=(\coef{\bsigma}_2,\coef{\bomega},\coef{\bz})$, the coupling matrices read
\begin{align*}
\blkH &= \begin{bmatrix} \cero & \bB^{\top} & \cero & \cero \\ -\bB & \cero & \cero & \cero \\ -\bTc & \cero & \cero & \cero \\ \cero & \cero & \cero & \cero \end{bmatrix}, \qquad
\blkS = \begin{bmatrix} \cero & \bE^{\top} & \cero \\ -\bB & \cero & \cero \\ \cero & \cero & -\bK \\ \cero & \cero & \cero \end{bmatrix}, \\[1ex]
\blkB &= \begin{bmatrix} \cero & \bB^{\top} & \cero & \cero \\ \bE & \cero & \cero & \cero \\ \cero & \cero & \bK^{\top} & \cero \end{bmatrix}, \qquad
\blkC = \begin{bmatrix} \bA_2 & \bE^{\top} & \cero \\ \bE & \cero & \cero \\ \cero & \cero & \bC \end{bmatrix},
\end{align*}
and the right-hand sides gather the data and the nonlinearities,
\begin{equation*}
\coef{\bF}(t,\yvec) := \begin{bmatrix} \cero \\ \coef{\bb}(t) \\ \coef{\ff}(\coef{\varphi},\coef{\theta}) \\ \coef{\gg}(\coef{\varphi},\coef{\theta}) \end{bmatrix}, \qquad
\coef{\bG}(\yvec) := \begin{bmatrix} \coef{\ba}(\coef{\varphi}) \\ \cero \\ \cero \end{bmatrix}.
\end{equation*}
Note that the algebraic block of~\eqref{eq:fem-block-form} decouples into the saddle-point subsystem associated with the pair $(\coef{\bsigma}_2,\coef{\bomega})$ and the symmetric positive definite subsystem associated with $\coef{\bz}$. This structure is what allows the algebraic unknowns to be eliminated, thereby reducing~\eqref{eq:fem-block-form} to a system of ordinary differential equations, as we now show.
 
\begin{theorem}[Well-posedness of the semidiscrete problem]\label{thm:semidiscrete-wellposedness}
Let {\bf (H1)} hold and assume, in addition to Assumption~\ref{assum:data-regularity}, that $\bb \in C^0([0,t_{\rm end}];\bL^2(\Omega))$. Then problem~\eqref{eq:semidiscrete}, complemented with the initial data~\eqref{eq:discrete-initial-data}, admits a unique solution on $[0,t_{\rm end}]$, with the regularity stated in~\eqref{eq:semidiscrete-unknowns}.
\end{theorem}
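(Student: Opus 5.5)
We reduce the differential-algebraic system \eqref{eq:fem-block-form} to an explicit ODE for $\yvec=(\coef{\bsigma}_1,\coef{\bv},\coef{\varphi},\coef{\theta})$ by solving the algebraic block for $\xivec=(\coef{\bsigma}_2,\coef{\bomega},\coef{\bz})$, and then apply Picard--Lindel\"of. The first step is to show that $\blkC$ is invertible. Because $\blkC$ is block diagonal with blocks $\left[\begin{smallmatrix}\bA_2 & \bE^\top\\ \bE & \cero\end{smallmatrix}\right]$ and $\bC$, and $\bC$ is symmetric positive definite, it is enough to treat the saddle-point block. Here $\bA_2$ is symmetric positive definite on all of $\R^{n_\bsigma}$, so this block is invertible if and only if $\bE^\top$ is injective. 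In other words, we need: if $\eeta_h\in\bbK_h$ satisfies $(\btau_h,\eeta_h)_\Omega=0$ for all $\btau_h\in\bbS_h$, then $\eeta_h=\cero$. This is where {\bf (H1)} enters. Taking $\bw_h=\cero$ in {\bf (H1)} gives
\[
\beta_{\rd}\|\eeta_h\|_{\bbL^2(\Omega)}\le \sup_{\btau_h}\frac{(\btau_h,\eeta_h)_\Omega}{\|\btau_h\|_{\bbH(\bdiv;\Omega)}}=0 .
\]
Notice that only the $\bE$-part of the inf-sup condition is used. The $\bB$-part is not needed for invertibility, because $\coef{\bv}$ is a differential unknown.

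We therefore write $\xivec=\blkC^{-1}\big(\coef{\bG}(\yvec)-\blkB\yvec\big)$ and substitute it into the differential block. Since $\blkM$ is symmetric positive definite, this gives
\[
\dot{\yvec}=\blkM^{-1}\Big(\coef{\bF}(t,\yvec)-\blkH\yvec-\blkS\blkC^{-1}\big(\coef{\bG}(\yvec)-\blkB\yvec\big)\Big)=:\Psi(t,\yvec).
\]
We then check two properties of $\Psi$. It is continuous in $t$, because $\bb\in C^0([0,t_{\rm end}];\bL^2(\Omega))$ makes $\coef{\bb}$ continuous. It is globally Lipschitz in $\yvec$, uniformly in $t$, because the linear parts are fixed matrices and $\coef{\ba},\coef{\ff},\coef{\gg}$ inherit the global Lipschitz continuity of $\alpha,f,g$. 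For instance,
\[
|(\coef{\ba}(\coef{\varphi}^1)-\coef{\ba}(\coef{\varphi}^2))_k|\le C L_\alpha\|\varphi_h^1-\varphi_h^2\|_{\LL^2(\Omega)}\|\tr\btau^{(k)}\|_{\LL^2(\Omega)},
\]
and $\|\varphi_h^1-\varphi_h^2\|_{\LL^2(\Omega)}$ is bounded by the Euclidean norm of the coefficient difference times a constant depending on $h$. That dependence on $h$ is harmless here.

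The global Picard--Lindel\"of theorem then gives a unique $\yvec\in C^1([0,t_{\rm end}];\R^{n})$ with $\yvec(0)$ given by the projected initial data \eqref{eq:discrete-initial-data}. No finite-time blow-up can occur because $\Psi$ is globally Lipschitz. We define $\xivec(t)$ through the formula above; it lies in $C^0$ because it is a continuous function of $\yvec$. This yields the regularity in \eqref{eq:semidiscrete-unknowns}.

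For uniqueness, any solution of \eqref{eq:semidiscrete} with that regularity satisfies \eqref{eq:fem-block-form}. Its algebraic part is then forced to equal the expression for $\xivec$ above, so its $\yvec$ solves the same ODE and coincides with ours. Finally, the equivalence between \eqref{eq:semidiscrete} and \eqref{eq:fem-dae} holds by testing against basis functions.

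The main obstacle is the invertibility of the algebraic block. The key point is recognising that $\bA_2$ is coercive on the whole stress space, not only on a kernel, so the single condition $\ker\bE^\top=\{\cero\}$ extracted from {\bf (H1)} suffices.
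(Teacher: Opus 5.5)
Your proposal is correct and follows the paper's proof: you eliminate $(\coef{\bsigma}_2,\coef{\bomega},\coef{\bz})$ using the positive definiteness of $\bA_2$ and $\bC$ together with the injectivity of $\bE^{\top}$ obtained from \textbf{(H1)} with $\bw_h=\cero$, and then apply Picard--Lindel\"of to the reduced ODE. The only difference is that you get existence on all of $[0,t_{\rm end}]$ directly from the global Lipschitz bound, whereas the paper first takes a maximal solution and rules out blow-up with a discrete energy estimate; your argument shows that this extra step is not needed for well-posedness.
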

\begin{proof}
With the ordering $\xivec=(\coef{\bsigma}_2,\coef{\bomega},\coef{\bz})$, the algebraic block of~\eqref{eq:fem-block-form} amounts to
\begin{equation}\label{eq:semidiscrete-wp-proof-1}
\begin{bmatrix} \bA_2 & \bE^{\top} \\ \bE & \cero \end{bmatrix}
\begin{bmatrix} \coef{\bsigma}_2 \\ \coef{\bomega} \end{bmatrix}
= \begin{bmatrix} \coef{\ba}(\coef{\varphi}) - \bB^{\top}\coef{\bv} \\ -\,\bE\,\coef{\bsigma}_1 \end{bmatrix}\, ,
\qquad
\bC\,\coef{\bz} = -\,\bK^{\top}\coef{\varphi}\, .
\end{equation}
The second identity in~\eqref{eq:semidiscrete-wp-proof-1} determines $\coef{\bz}$ uniquely, since $\bC$ is symmetric and positive definite. As for the first one, we observe that $\bA_2$ is symmetric and positive definite, because $\va^2$ is an inner product on $\bbS_h$, and that $\bE$ has full row rank. The latter follows from {\bf (H1)}: choosing $\bw_h=\cero$ therein, and recalling that $\vb(\btau_h,[\cero,\eeta_h]) = \coef{\eeta}^{\top}\bE\,\coef{\btau}$, we obtain
\begin{equation*}
\sup_{\cero \neq \btau_h \in \bbS_h}\frac{\coef{\eeta}^{\top}\bE\,\coef{\btau}}{\|\btau_h\|_{\bbH(\bdiv;\Omega)}} \geq \beta_{\rd}\, \|\eeta_h\|_{\bbL^2(\Omega)} \qquad \forall\, \eeta_h \in \bbK_h\, ,
\end{equation*}
so that $\bE^{\top}\coef{\eeta}\neq\cero$ whenever $\coef{\eeta}\neq\cero$. These two properties render the matrix in~\eqref{eq:semidiscrete-wp-proof-1} invertible. Indeed, if $\bA_2\,\coef{\bx} + \bE^{\top}\coef{\bq} = \cero$ and $\bE\,\coef{\bx} = \cero$, then multiplying the first identity by $\coef{\bx}^{\top}$ and using the second one yields $\coef{\bx}^{\top}\bA_2\,\coef{\bx} = 0$, whence $\coef{\bx}=\cero$, and therefore $\bE^{\top}\coef{\bq}=\cero$, which in turn gives $\coef{\bq}=\cero$. Consequently, there exists a map $\xivec = \Upsilon(\yvec)$ furnishing the algebraic unknowns in terms of the differential ones, which is globally Lipschitz continuous, as it depends linearly and boundedly on $(\coef{\bsigma}_1,\coef{\bv},\coef{\varphi})$ and on $\coef{\varphi}$ through the Lipschitz map $\coef{\ba}$.

Substituting $\xivec = \Upsilon(\yvec)$ into the differential block of~\eqref{eq:fem-block-form}, and recalling that the mass matrix $\blkM$ in~\eqref{eq:fem-block-mass} is symmetric and positive definite, and hence invertible, we arrive at the initial-value problem
\begin{equation}\label{eq:semidiscrete-wp-proof-2}
\dot{\yvec}(t) = \blkM^{-1}\Big( \coef{\bF}\big(t,\yvec(t)\big) - \blkH\,\yvec(t) - \blkS\,\Upsilon\big(\yvec(t)\big) \Big) =: \boldsymbol{\Psi}\big(t,\yvec(t)\big)\, , \qquad \yvec(0)=\yvec^{\,0}\, ,
\end{equation}
where $\yvec^{\,0}$ collects the coordinates of the discrete initial data. Since $\alpha$, $f$, and $g$ are globally Lipschitz continuous (cf.~\eqref{eq:alpha-Lipschitz},~\eqref{eq:assumption-f},~\eqref{eq:assumption-g}), the map $\Upsilon$ is Lipschitz, and all norms are equivalent on the finite-dimensional spaces at hand, the function $\boldsymbol{\Psi}(t,\bullet)$ is globally Lipschitz continuous, uniformly in $t$. Moreover, $\boldsymbol{\Psi}(\bullet,\yvec)$ is continuous because $\bb$ is. The Picard--Lindel\"of theorem then yields a unique maximal solution of~\eqref{eq:semidiscrete-wp-proof-2} on an interval $[0,T_{\max})$, with $T_{\max}\leq t_{\rm end}$.

In order to obtain an \textit{a priori} energy bound, we now test the first row of~\eqref{eq:semidiscrete} with $\btau_{1,h}=\bsigma_{1,h}$, the second with $\btau_{2,h}=\bsigma_{2,h}$, the third with $[\bw_h,\eeta_h]=[\bv_h,\bomega_h]$, the fourth with $\br_h=\bz_h$, the fifth with $\psi_h=\varphi_h$, and the sixth with $\vartheta_h=\theta_h$, so that the terms involving $\vb$ and $\cb$ cancel pairwise, exactly as in the proof of Theorem~\ref{thm:mixed-stability}. Introducing the discrete energy
\begin{equation*}
\sE_h(t) := \fracd{1}{2}\,\va^1(\bsigma_{1,h},\bsigma_{1,h}) + \fracd{\varrho}{2}\,\|\bv_h\|_{\bL^2(\Omega)}^2 + \fracd{1}{2}\,\|\varphi_h\|_{\LL^2(\Omega)}^2 + \fracd{1}{2}\,\|\theta_h\|_{\LL^2(\Omega)}^2\, ,
\end{equation*}
we obtain
\begin{align*}
\partial_t\sE_h + \va^2(\bsigma_{2,h},\bsigma_{2,h}) + \ca(\bz_h,\bz_h) = {}& -\fracd{1}{2\mu_2+d\lambda_2}\,(\alpha(\varphi_h),\tr(\bsigma_{2,h}))_\Omega + (\varrho\,\bb,\bv_h)_\Omega + (f(\varphi_h,\theta_h),\varphi_h)_\Omega \\
&  + (g(\varphi_h,\theta_h),\theta_h)_\Omega   + \fracd{\beta}{2\mu_1+d\lambda_1}\,(\tr(\bsigma_{1,h}),\varphi_h)_\Omega\, .
\end{align*}
The first term on the right-hand side is absorbed into $\va^2(\bsigma_{2,h},\bsigma_{2,h})$ by the Cauchy--Schwarz and Young inequalities, since $0\leq\alpha\leq\alpha_0$. The remaining ones are bounded by means of the Cauchy--Schwarz and Young inequalities, the bound $\|\tr\btau\|_{\LL^2(\Omega)}\leq\sqrt{d}\,\|\btau\|_{\bbL^2(\Omega)}$, the uniform positive definiteness of $\cA_1$, and the linear growth of $f$ and $g$ implied by~\eqref{eq:assumption-f}--\eqref{eq:assumption-g}. Hence, there exist constants $C_1,C_2>0$, independent of $t$, such that
\begin{equation*}
\partial_t\sE_h \leq C_1\,\sE_h + C_2\,\big(1+\|\bb(t)\|_{\bL^2(\Omega)}^2\big)\, ,
\end{equation*}
and Gr\"onwall's inequality yields that $\sE_h$ remains bounded on $[0,T_{\max})$, uniformly with respect to $t$.

Since $\va^1$, $\bMv$, $\bMf$ and $\bMt$ induce inner products on the corresponding discrete spaces, there is $c_h>0$ with $\sE_h \geq c_h\,|\yvec|^2$. The bound established above therefore prevents $|\yvec|$ from blowing up as $t\to T_{\max}^-$, whence $T_{\max}=t_{\rm end}$ and the solution of~\eqref{eq:semidiscrete-wp-proof-2} is defined on the whole of $[0,t_{\rm end}]$. Uniqueness follows from that of the Picard--Lindel\"of theorem together with the unique solvability of~\eqref{eq:semidiscrete-wp-proof-1}. Finally, $\dot{\yvec}=\boldsymbol{\Psi}(\bullet,\yvec)$ is continuous, so that $\yvec\in C^1([0,t_{\rm end}])$, while $\xivec=\Upsilon(\yvec)$ is continuous, which gives precisely the regularity asserted for $(\bsigma_{1,h},\bsigma_{2,h},\bv_h,\bomega_h,\bz_h,\varphi_h,\theta_h)$. This completes the proof.
\end{proof}
 
\begin{remark}\label{rem:semidiscrete-caratheodory}
The continuity assumption on $\bb$ is only used to guarantee that $\yvec\in C^1([0,t_{\rm end}])$. Under the weaker regularity $\bb\in\LL^2(0,t_{\rm end};\bL^2(\Omega))$ of Assumption~\ref{assum:data-regularity}, the same argument based on Carath\'eodory's existence theorem provides a unique absolutely continuous solution with $\dot{\yvec}\in\LL^2(0,t_{\rm end})$, and the regularity of the semidiscrete solution has to be relaxed accordingly.
\end{remark}

\section{A priori error estimates}
\label{sec:errors}

In this section we derive the error equations associated with the semidiscretisation~\eqref{eq:semidiscrete}. Throughout, $(\bsigma_1,\bsigma_2,\bv,\bomega,\bz,\varphi,\theta)$ denotes the solution of~\eqref{eq:mixed-weak} provided by Theorems~\ref{thm:mixed-reconstructions} and~\ref{thm:mixed-stability}, and $(\bsigma_{1,h},\bsigma_{2,h},\bv_h,\bomega_h,\bz_h,\varphi_h,\theta_h)$ denotes the solution of~\eqref{eq:semidiscrete} given by Theorem~\ref{thm:semidiscrete-wellposedness}.

\subsection{Projection operators and error splitting}\label{sec:projections}

Besides {\bf (H1)}, the forthcoming analysis relies on some structural properties of the finite element spaces, as well as the existence of projection operators onto the stress and flux spaces which are compatible with the divergence operators. We state these properties in an abstract form. Specific families of finite elements fulfilling them are discussed in Section~\ref{sec:FE-examples}.

\noindent \textbf{(H2)} $\bdiv(\bbS_h) = \bV_h$.
 
\noindent \textbf{(H3)} $\vdiv(\bZ_h) = \Phi_h$.

\noindent \textbf{(H4)} There exist $s>1/2$ and a bounded linear operator $\bPi_h^e : \bbH_\star(\bdiv;\Omega)\cap\bbH^{s}(\Omega) \to \bbS_h$ such that
\begin{equation}\label{eq:commuting-stress}
\bdiv\big(\bPi_h^e\btau\big) = \bsP_h^{\bV}\big(\bdiv\,\btau\big) \qquad \forall\, \btau \in \bbH_\star(\bdiv;\Omega)\cap\bbH^{s}(\Omega)\, ,
\end{equation}
where $\bsP_h^{\bV}:\bL^2(\Omega)\to\bV_h$ denotes the $\bL^2(\Omega)$-orthogonal projection onto $\bV_h$, and
\begin{equation}\label{eq:weak-symmetry-property-interpolation-operator}
(\bPi_h^e\btau,\eeta_h)_\Omega = (\btau,\eeta_h)_\Omega \qquad \forall\, \eeta_h \in \bbK_h\, , \quad \forall\, \btau \in \bbH_\star(\bdiv;\Omega)\cap\bbH^{s}(\Omega) \, .
\end{equation}

\noindent \textbf{(H5)} There exist $s>1/2$ and a bounded linear operator $\wt\bPi_h : \bH_\star(\vdiv;\Omega)\cap\bH^{s}(\Omega) \to \bZ_h$ such that
\begin{equation}\label{eq:commuting-flux}
\vdiv\big(\wt\bPi_h\br\big) = \sP_h^{\Phi}\big(\vdiv\,\br\big) \qquad \forall\, \br \in \bH_\star(\vdiv;\Omega)\cap\bH^{s}(\Omega)\, ,
\end{equation}
where $\sP_h^{\Phi}:\LL^2(\Omega)\to\Phi_h$ denotes the $\LL^2(\Omega)$-orthogonal projection onto $\Phi_h$.

In addition, we let $\bsP_h^{\bbK}:\bbL^2(\Omega)\to\bbK_h$ and $\sP_h^{\Theta}:\LL^2(\Omega)\to\Theta_h$ be the corresponding $\LL^2$-orthogonal projections. We assume from now on that the solution of~\eqref{eq:mixed-weak} is regular enough for $\bPi_h^e\bsigma_i$ and $\wt\bPi_h\bz$ to be well defined for a.e. $t\in(0,t_{\rm end}]$.

As usual, we split each error into an approximation (or projection) part and a discrete part. Setting
\begin{equation}\label{eq:error-splitting}
\begin{array}{lll}
\be_{\bsigma_i} := \bsigma_i - \bsigma_{i,h} = \be^I_{\bsigma_i} + \be^h_{\bsigma_i}\, , &
\be_{\bv} := \bv - \bv_h = \be^I_{\bv} + \be^h_{\bv}\, , &
\be_{\bomega} := \bomega - \bomega_h = \be^I_{\bomega} + \be^h_{\bomega}\, , \\[1.5ex]
\be_{\bz} := \bz - \bz_h = \be^I_{\bz} + \be^h_{\bz}\, , &
e_{\varphi} := \varphi - \varphi_h = e^I_{\varphi} + e^h_{\varphi}\, , &
e_{\theta} := \theta - \theta_h = e^I_{\theta} + e^h_{\theta}\, ,
\end{array}
\end{equation}
where the projection errors and the discrete errors are given, respectively, by
\begin{equation}\label{eq:error-splitting-def-1}
\begin{gathered}
\be^I_{\bsigma_i} := \bsigma_i - \bPi_h^e\bsigma_i\, , \qquad
\be^I_{\bv} := \bv - \bsP_h^{\bV}\bv\, , \qquad
\be^I_{\bomega} := \bomega - \bsP_h^{\bbK}\bomega\, , \\
\be^I_{\bz} := \bz - \wt\bPi_h\bz\, , \qquad
e^I_{\varphi} := \varphi - \sP_h^{\Phi}\varphi\, , \qquad
e^I_{\theta} := \theta - \sP_h^{\Theta}\theta\, ,
\end{gathered}
\end{equation}
and
\begin{equation}\label{eq:error-splitting-def-2}
\begin{gathered}
\be^h_{\bsigma_i} := \bPi_h^e\bsigma_i - \bsigma_{i,h}\, , \qquad
\be^h_{\bv} := \bsP_h^{\bV}\bv - \bv_h\, , \qquad \be^h_{\bomega} := \bsP_h^{\bbK}\bomega - \bomega_h\, , \\
\be^h_{\bz} := \wt\bPi_h\bz - \bz_h\, , \qquad
e^h_{\varphi} := \sP_h^{\Phi}\varphi - \varphi_h\, , \qquad
e^h_{\theta} := \sP_h^{\Theta}\theta - \theta_h\, .
\end{gathered}
\end{equation}
Note that, by construction, the discrete errors are discrete functions, that is
\begin{equation*}
\be^h_{\bsigma_i}(t) \in \bbS_h\, , \quad \be^h_{\bv}(t) \in \bV_h\, , \quad \be^h_{\bomega}(t) \in \bbK_h\, , \quad \be^h_{\bz}(t) \in \bZ_h\, , \quad e^h_{\varphi}(t) \in \Phi_h\, , \qan e^h_{\theta}(t) \in \Theta_h\, ,
\end{equation*}
so they are admissible test functions in~\eqref{eq:semidiscrete}. We shall repeatedly use the following three consequences of the hypotheses above. First, since $\bdiv(\bbS_h)=\bV_h$ (cf.~{\bf (H2)}) and $\vdiv(\bZ_h)=\Phi_h$ (cf.~{\bf (H3)}), the definitions of $\bsP_h^{\bV}$ and $\sP_h^{\Phi}$ yield
\begin{equation}\label{eq:orthogonality-div-test}
(\bdiv\,\btau_h,\be^I_{\bv})_\Omega = 0 \qan (\vdiv\,\br_h,e^I_{\varphi})_\Omega = 0 \qquad \forall\, \btau_h \in \bbS_h\, ,\ \forall\, \br_h \in \bZ_h\, .
\end{equation}
Second, the commuting properties~\eqref{eq:commuting-stress} and~\eqref{eq:commuting-flux}, combined with the property~\eqref{eq:weak-symmetry-property-interpolation-operator} give
\begin{equation}\label{eq:orthogonality-div-proj}
\vb(\be^I_{\bsigma_1}+\be^I_{\bsigma_2},[\bw_h,\eeta_h]) = 0 \qan (\vdiv\,\be^I_{\bz},\psi_h)_\Omega = 0 \qquad \forall\, [\bw_h,\eeta_h] \in \bV_h\times \bbK_h\, ,\ \forall\, \psi_h \in \Phi_h\, .
\end{equation}
Finally, since $\bsP_h^{\bV}$, $\sP_h^{\Phi}$, and $\sP_h^{\Theta}$ are time-independent, they commute with $\partial_t$, whence
\begin{equation}\label{eq:orthogonality-time}
(\dot \be^I_\bv,\bw_h)_\Omega = 0\, , \quad (\dot e^I_\varphi,\psi_h)_\Omega = 0\, , \qan (\dot e^I_\theta,\vartheta_h)_\Omega = 0 \qquad \forall\, \bw_h \in \bV_h\, ,\ \forall\, \psi_h \in \Phi_h\, ,\ \forall\, \vartheta_h \in \Theta_h\, .
\end{equation}

\subsection{The error equations}\label{sec:error-equations}

Since the finite element spaces are conforming, the discrete test functions are admissible in~\eqref{eq:mixed-weak}. Hence, subtracting~\eqref{eq:semidiscrete} from~\eqref{eq:mixed-weak} tested against $(\btau_{1,h},\btau_{2,h},[\bw_h,\eeta_h],\br_h,\psi_h,\vartheta_h)\in \bbS_h\times\bbS_h\times[\bV_h\times\bbK_h]\times\bZ_h\times\Phi_h\times\Theta_h$, we obtain, for a.e. $t\in(0,t_{\rm end}]$,
\begin{equation}\label{eq:error-eqs}
\begin{array}{rcl}
\va^1(\dot{\be}_{\bsigma_1},\btau_{1,h}) + \vb(\btau_{1,h},[\be_{\bv},\be_{\bomega}]) & = & 0 \, , \\[1.25ex]
\va^2(\be_{\bsigma_2},\btau_{2,h}) + \vb(\btau_{2,h},[\be_{\bv},\be_{\bomega}]) & = & -\frac{1}{2\mu_2+d\lambda_2}\,\big(\alpha(\varphi)-\alpha(\varphi_h),\tr(\btau_{2,h})\big)_\Omega \, , \\[1.25ex]
(\varrho\,\dot{\be}_{\bv},\bw_h)_\Omega - \vb(\be_{\bsigma_1}+\be_{\bsigma_2},[\bw_h,\eeta_h]) & = & 0 \, , \\[1.25ex]
\ca(\be_{\bz},\br_h) + \cb(\br_h,e_{\varphi}) & = & 0 \, , \\[1.25ex]
(\dot{e}_{\varphi},\psi_h)_\Omega - \cb(\be_{\bz},\psi_h) & = & \big(f(\varphi,\theta)-f(\varphi_h,\theta_h),\psi_h\big)_\Omega + \tfrac{\beta}{2\mu_1+d\lambda_1}\big(\tr(\be_{\bsigma_1}),\psi_h\big)_\Omega \, , \\[1.25ex]
(\dot{e}_{\theta},\vartheta_h)_\Omega & = & \big(g(\varphi,\theta)-g(\varphi_h,\theta_h),\vartheta_h\big)_\Omega \, .
\end{array}
\end{equation}

We now insert the splitting~\eqref{eq:error-splitting} into~\eqref{eq:error-eqs} and move the projection errors to the right-hand side. This yields a system involving only $(\be^h_{\bsigma_1},\be^h_{\bsigma_2},\be^h_{\bv},\be^h_{\bomega},\be^h_{\bz},e^h_{\varphi},e^h_{\theta})$ on the left-hand side. Several terms vanish by the orthogonality relations~\eqref{eq:orthogonality-div-test}--\eqref{eq:orthogonality-time}. In particular, recalling the definition of $\vb$ and using~\eqref{eq:orthogonality-div-test}, the first two rows of~\eqref{eq:error-eqs} give
\begin{equation*}
\vb\big(\btau_{i,h},[\be^I_{\bv},\be^I_{\bomega}]\big) = (\bdiv\,\btau_{i,h},\be^I_{\bv})_\Omega + (\btau_{i,h},\be^I_{\bomega})_\Omega = (\btau_{i,h},\be^I_{\bomega})_\Omega\, .
\end{equation*}
In turn, by~\eqref{eq:orthogonality-div-proj} and~\eqref{eq:orthogonality-time}, in the third row we have $\vb\big(\be^I_{\bsigma_1}+\be^I_{\bsigma_2},[\bw_h,\eeta_h]\big) = 0$ and $(\varrho\,\dot \be^I_\bv,\bw_h)_\Omega = 0$, respectively. In the fourth and fifth rows, $\cb(\br_h,e^I_{\varphi}) = 0$ and $\cb(\be^I_{\bz},\psi_h) = 0$ by~\eqref{eq:orthogonality-div-test} and~\eqref{eq:orthogonality-div-proj}, respectively, while $(\dot e^I_\varphi,\psi_h)_\Omega = 0$ and $(\dot e^I_\theta,\vartheta_h)_\Omega = 0$ by~\eqref{eq:orthogonality-time}.

Consequently, the discrete errors satisfy, for a.e. $t\in(0,t_{\rm end}]$,
\begin{equation}\label{eq:discrete-error-eqs}
\begin{array}{rcl}
\va^1(\dot \be^h_{\bsigma_1},\btau_{1,h}) + \vb(\btau_{1,h},[\be^h_{\bv},\be^h_{\bomega}]) & = & -\,\va^1(\dot\be^I_{\bsigma_1},\btau_{1,h}) - (\btau_{1,h},\be^I_{\bomega})_\Omega \, , \\[1.25ex]
\va^2(\be^h_{\bsigma_2},\btau_{2,h}) + \vb(\btau_{2,h},[\be^h_{\bv},\be^h_{\bomega}]) & = & -\,\va^2(\be^I_{\bsigma_2},\btau_{2,h}) - (\btau_{2,h},\be^I_{\bomega})_\Omega \\[0.6ex]
& & \quad -\ \frac{1}{2\mu_2+d\lambda_2}\,\big(\alpha(\varphi)-\alpha(\varphi_h),\tr(\btau_{2,h})\big)_\Omega \, , \\[1.25ex]
(\varrho\,\dot{\be}^h_{\bv},\bw_h)_\Omega - \vb(\be^h_{\bsigma_1}+\be^h_{\bsigma_2},[\bw_h,\eeta_h]) & = & 0 \, , \\[1.25ex]
\ca(\be^h_{\bz},\br_h) + \cb(\br_h,e^h_{\varphi}) & = & -\,\ca(\be^I_{\bz},\br_h) \, , \\[1.25ex]
(\dot{e}^h_{\varphi},\psi_h)_\Omega - \cb(\be^h_{\bz},\psi_h) & = & \big(f(\varphi,\theta)-f(\varphi_h,\theta_h),\psi_h\big)_\Omega \\[0.6ex]
& & \quad +\ \tfrac{\beta}{2\mu_1+d\lambda_1}\big(\tr(\be^I_{\bsigma_1}+\be^h_{\bsigma_1}),\psi_h\big)_\Omega \, , \\[1.25ex]
(\dot{e}^h_{\theta},\vartheta_h)_\Omega & = & \big(g(\varphi,\theta)-g(\varphi_h,\theta_h),\vartheta_h\big)_\Omega \, ,
\end{array}
\end{equation}
for all $(\btau_{1,h},\btau_{2,h},[\bw_h,\eeta_h],\br_h,\psi_h,\vartheta_h)\in \bbS_h\times\bbS_h\times[\bV_h\times\bbK_h]\times\bZ_h\times\Phi_h\times\Theta_h$.

Finally, concerning the initial discrete errors, the choice of the discrete initial data made in~\eqref{eq:discrete-initial-data} gives
\begin{equation}\label{eq:initial-discrete-errors}
\be^h_{\bsigma_1}(0) = \bPi_h^e\bsigma_1(0) - \bsigma_{1,h}^0\, , \quad
\be^h_{\bv}(0) = \cero\, , \quad
e^h_{\varphi}(0) = 0\, , \qan
e^h_{\theta}(0) = 0\, ,
\end{equation}
since $\bv_h^0$, $\varphi_h^0$, and $\theta_h^0$ were defined as the $\LL^2$-projections of $\bv_0$, $\varphi_0$, and $\theta_0$ onto $\bV_h$, $\Phi_h$, and $\Theta_h$, respectively.

\subsection{A priori error estimate}\label{sec:error-estimate}

Following the structure of~\eqref{eq:discrete-error-eqs}, we measure the discrete errors by means of the energy and dissipation functionals associated to the error equations:
\begin{equation}\label{eq:error-energy-functionals}
\begin{gathered}
\sE_h^{\sf e}(t) := \fracd{1}{2}\, \va^1\big(\be^h_{\bsigma_1},\be^h_{\bsigma_1}\big) + \fracd{\varrho}{2}\,\|\be^h_{\bv}\|_{\bL^2(\Omega)}^2 + \fracd{1}{2}\, \|e^h_{\varphi}\|_{\LL^2(\Omega)}^2 + \fracd{1}{2}\, \|e^h_{\theta}\|_{\LL^2(\Omega)}^2 \, , \\[1ex]
\sD_h^{\sf e}(t) := \va^2\big(\be^h_{\bsigma_2},\be^h_{\bsigma_2}\big) + \ca\big(\be^h_{\bz},\be^h_{\bz}\big)\, ,
\end{gathered}
\end{equation}
and we collect the initial discrete error together with the consistency contributions stemming from the projection errors into
\begin{equation}\label{eq:error-consistency-functional}
\begin{aligned}
\Xi(t)^2 &:= \|\be^h_{\bsigma_1}(0)\|_{\bbL^2(\Omega)}^2 + \|\dot{\be}^I_{\bsigma_1}\|_{\LL^2(0,t;\bbL^2(\Omega))}^2 + \|\be^I_{\bsigma_1}\|_{\LL^\infty(0,t;\bbL^2(\Omega))}^2 + \|\be^I_{\bv}\|_{\LL^\infty(0,t;\bL^2(\Omega))}^2 \\
&\quad + \|\be^I_{\bomega}\|_{\LL^2(0,t;\bbL^2(\Omega))}^2 + \|\be^I_{\bsigma_2}\|_{\LL^2(0,t;\bbL^2(\Omega))}^2 + \|\be^I_{\bz}\|_{\LL^2(0,t;\bL^2(\Omega))}^2 \\
&\quad + \|e^I_{\varphi}\|_{\LL^\infty(0,t;\LL^2(\Omega))}^2 + \|e^I_{\theta}\|_{\LL^\infty(0,t;\LL^2(\Omega))}^2 \, .
\end{aligned}
\end{equation}

\begin{theorem}[A priori error estimate]\label{thm:error-estimate}
Let the hypotheses of Theorem~\ref{thm:semidiscrete-wellposedness} hold, assume {\bf (H2)}--{\bf (H5)}, and suppose in addition that $\bsigma_1 \in \HH^1(0,t_{\rm end};\bbH_\star(\bdiv;\Omega)\cap\bbH^s(\Omega))$. Then there exists $C>0$, depending only on $\cA_1$, $\cA_2$, $\diff$, $\varrho$, the Lipschitz constants of $\alpha$, $f$, $g$, the Lam\'e parameters, $\beta$, $d$, and $t_{\rm end}$, but independent of $h$, such that, for all $t\in(0,t_{\rm end}]$,
\begin{equation}\label{eq:error-estimate}
\begin{aligned}
&\|\be_{\bsigma_1}\|_{\LL^\infty(0,t;\bbL^2(\Omega))}^2 + \|\be_{\bv}\|_{\LL^\infty(0,t;\bL^2(\Omega))}^2 + \|e_{\varphi}\|_{\LL^\infty(0,t;\LL^2(\Omega))}^2 + \|e_{\theta}\|_{\LL^\infty(0,t;\LL^2(\Omega))}^2 \\
&\qquad + \|\be_{\bsigma_2}\|_{\LL^2(0,t;\bbL^2(\Omega))}^2 + \|\be_{\bz}\|_{\LL^2(0,t;\bL^2(\Omega))}^2 \leq C\, \Xi(t)^2\, .
\end{aligned}
\end{equation}
\end{theorem}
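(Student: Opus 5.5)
The plan is an energy argument on the discrete error equations \eqref{eq:discrete-error-eqs}, mirroring the proof of Theorem~\ref{thm:mixed-stability}, followed by a triangle inequality with the projection errors. We test the first row with $\btau_{1,h}=\be^h_{\bsigma_1}$, the second with $\btau_{2,h}=\be^h_{\bsigma_2}$, and the third with $[\bw_h,\eeta_h]=[\be^h_{\bv},\be^h_{\bomega}]$. The fourth row is tested with $\br_h=\be^h_{\bz}$, the fifth with $\psi_h=e^h_\varphi$ and the sixth with $\vartheta_h=e^h_\theta$. After adding, the $\vb$- and $\cb$-terms cancel pairwise, which yields
\begin{equation*}
\partial_t\sE_h^{\sf e} + \sD_h^{\sf e} = \sR_h\, .
\end{equation*}
The right-hand side $\sR_h$ collects several contributions:
\begin{itemize}
\item $-\va^1(\dot\be^I_{\bsigma_1},\be^h_{\bsigma_1})$ and $-\va^2(\be^I_{\bsigma_2},\be^h_{\bsigma_2})$;
\item the spin consistency terms $-(\be^h_{\bsigma_1}+\be^h_{\bsigma_2},\be^I_{\bomega})_\Omega$;
\item the Hill term with $\alpha(\varphi)-\alpha(\varphi_h)$;
\item $-\ca(\be^I_{\bz},\be^h_{\bz})$;
\item the $f$, $g$ differences;
\item the trace coupling $\tfrac{\beta}{2\mu_1+d\lambda_1}(\tr(\be^I_{\bsigma_1}+\be^h_{\bsigma_1}),e^h_\varphi)_\Omega$.
\end{itemize}

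Next we bound each term by Cauchy--Schwarz and Young. The terms involving $\be^h_{\bsigma_2}$ and $\be^h_{\bz}$ are absorbed into $\tfrac12\sD_h^{\sf e}$, using the uniform positive definiteness of $\cA_2$ and $\diff^{-1}$. Everything involving $\be^h_{\bsigma_1}$, $e^h_\varphi$, $e^h_\theta$ is bounded by $C\sE_h^{\sf e}$. For the nonlinearities we use \eqref{eq:alpha-Lipschitz}, \eqref{eq:assumption-f}, \eqref{eq:assumption-g} and write $\varphi-\varphi_h=e^I_\varphi+e^h_\varphi$, and likewise for $\theta$. The projection contributions are left as data: $\|\dot\be^I_{\bsigma_1}\|^2$, $\|\be^I_{\bomega}\|^2$, $\|\be^I_{\bsigma_2}\|^2$, $\|\be^I_{\bz}\|^2$, $\|e^I_\varphi\|^2$, $\|e^I_\theta\|^2$ and $\|\be^I_{\bsigma_1}\|^2$. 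This gives
\begin{equation*}
\partial_t\sE_h^{\sf e}+\tfrac12\sD_h^{\sf e}\le C_0\,\sE_h^{\sf e} + C\,\rho_I(t)\, ,
\end{equation*}
where $\rho_I$ is the sum of the squared projection errors at time $t$.

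The term I expect to be the main obstacle is the spin consistency term $(\be^h_{\bsigma_1},\be^I_{\bomega})_\Omega$. The $\be^h_{\bsigma_2}$ part is harmless, since it is absorbed into the dissipation. The $\be^h_{\bsigma_1}$ part, however, is bounded only by $\sE_h^{\sf e}$, so Young gives $\|\be^I_{\bomega}\|^2_{\LL^2(0,t)}$ times a constant. This is acceptable and matches the $\LL^2$-in-time norm of $\be^I_{\bomega}$ in $\Xi(t)^2$, so straightforward Young plus Gr\"onwall should suffice. No $\LL^\infty$ norm of $\be^h_{\bomega}$ is ever needed, because $\be^h_{\bomega}$ has cancelled.

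Finally we integrate over $(0,t)$ and use \eqref{eq:initial-discrete-errors}; only $\be^h_{\bsigma_1}(0)$ survives, and it is controlled by the uniform boundedness of $\cA_1$. Gr\"onwall's inequality then bounds $\sup_{s\le t}\sE_h^{\sf e}(s)+\int_0^t\sD_h^{\sf e}$ by $C\,\Xi(t)^2$, with $C$ depending on $t_{\rm end}$ through $e^{C_0t_{\rm end}}$. The positive definiteness of $\cA_1,\cA_2,\diff^{-1}$ converts this into norms of the discrete errors. The triangle inequality with the projection errors, all of which appear in $\Xi(t)^2$ in the required time norms, yields \eqref{eq:error-estimate}. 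The assumption $\bsigma_1\in\HH^1(0,t_{\rm end};\bbH_\star(\bdiv;\Omega)\cap\bbH^s(\Omega))$ is used only to make $\dot\be^I_{\bsigma_1}=\dot\bsigma_1-\bPi_h^e\dot\bsigma_1$ meaningful; this holds because $\bPi_h^e$ is linear, bounded and time-independent.
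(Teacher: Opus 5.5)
Your proposal is correct and follows essentially the same route as the paper. You test \eqref{eq:discrete-error-eqs} with the discrete errors to get $\partial_t\sE_h^{\sf e}+\sD_h^{\sf e}=\sR_1+\sR_2$, absorb the $\be^h_{\bsigma_2}$- and $\be^h_{\bz}$-terms (including the Hill term) into the dissipation, control the $\be^h_{\bsigma_1}$ spin consistency term by $\sE_h^{\sf e}+\|\be^I_{\bomega}\|^2_{\bbL^2(\Omega)}$, and conclude with Gr\"onwall and the triangle inequality against the projection errors collected in $\Xi(t)^2$.
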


\begin{proof}
We first record that, by the uniform positive definiteness and boundedness of $\cA_1$, $\cA_2$ and $\diff$, the quantities introduced in~\eqref{eq:error-energy-functionals} control the discrete errors in the sense that
\begin{equation}\label{eq:error-estimate-proof-control}
\begin{gathered}
\|\be^h_{\bsigma_1}\|_{\bbL^2(\Omega)}^2 + \|\be^h_{\bv}\|_{\bL^2(\Omega)}^2 + \|e^h_{\varphi}\|_{\LL^2(\Omega)}^2 + \|e^h_{\theta}\|_{\LL^2(\Omega)}^2 \leq c_1\, \sE_h^{\sf e}\, , \\[1ex]
\|\be^h_{\bsigma_2}\|_{\bbL^2(\Omega)}^2 + \|\be^h_{\bz}\|_{\bL^2(\Omega)}^2 \leq c_2\, \sD_h^{\sf e}\, ,
\end{gathered}
\end{equation}
where $c_1>0$ depends only on $\varrho$, $\mu_1$ and $\lambda_1$, and $c_2>0$ only on $\mu_2$, $\lambda_2$ and $\overline{D}$.

We now take $\btau_{i,h}=\be^h_{\bsigma_i}$, $[\bw_h,\eeta_h]=[\be^h_{\bv},\be^h_{\bomega}]$, $\br_h=\be^h_{\bz}$, $\psi_h=e^h_{\varphi}$, and $\vartheta_h=e^h_{\theta}$ in~\eqref{eq:discrete-error-eqs}, and add the first three rows and, separately, the fourth and fifth ones. Note that the contributions of $\vb$ cancel pairwise, and likewise with $\cb$. Recalling that $\va^1$, $\va^2$, $\ca$, and the $\LL^2$-inner products are symmetric and time-independent, and adding the resulting identities together with the sixth row, we arrive at
\begin{equation}\label{eq:error-estimate-proof-1}
\partial_t \sE_h^{\sf e}(t) + \sD_h^{\sf e}(t) = \sR_1(t) + \sR_2(t) \, ,
\end{equation}
where the two groups of consistency terms are given by
\begin{equation*}
\begin{aligned}
\sR_1 := {}& -\,\va^1\big(\dot{\be}^I_{\bsigma_1},\be^h_{\bsigma_1}\big) - \va^2\big(\be^I_{\bsigma_2},\be^h_{\bsigma_2}\big) - \big(\be^h_{\bsigma_1}+\be^h_{\bsigma_2},\be^I_{\bomega}\big)_\Omega - \ca\big(\be^I_{\bz},\be^h_{\bz}\big) \, , \\
\sR_2 := {}& -\fracd{1}{2\mu_2+d\lambda_2}\big(\alpha(\varphi)-\alpha(\varphi_h),\tr(\be^h_{\bsigma_2})\big)_\Omega + \big(f(\varphi,\theta)-f(\varphi_h,\theta_h),e^h_{\varphi}\big)_\Omega \\
& + \big(g(\varphi,\theta)-g(\varphi_h,\theta_h),e^h_{\theta}\big)_\Omega + \fracd{\beta}{2\mu_1+d\lambda_1}\big(\tr(\be^I_{\bsigma_1}+\be^h_{\bsigma_1}),e^h_{\varphi}\big)_\Omega \, .
\end{aligned}
\end{equation*}

The four terms defining $\sR_1$ are estimated by the Cauchy--Schwarz inequality, the boundedness of $\cA_1$, $\cA_2$ and $\diff^{-1}$, and~\eqref{eq:error-estimate-proof-control}, which yield
\begin{equation}\label{eq:error-estimate-proof-2}
|\sR_1| \leq c_3\, \Big( \|\dot{\be}^I_{\bsigma_1}\|_{\bbL^2(\Omega)} + \|\be^I_{\bomega}\|_{\bbL^2(\Omega)} \Big)\, \sqrt{\sE_h^{\sf e}} + c_4\, \Big( \|\be^I_{\bsigma_2}\|_{\bbL^2(\Omega)} + \|\be^I_{\bomega}\|_{\bbL^2(\Omega)} + \|\be^I_{\bz}\|_{\bL^2(\Omega)} \Big)\, \sqrt{\sD_h^{\sf e}} \, ,
\end{equation}
with $c_3>0$ depending only on $c_1$ and the Lam\'e parameters, and $c_4>0$ only on $c_2$, the Lam\'e parameters, $\underline{D}$ and $\overline{D}$.

In order to bound $\sR_2$, we use the Lipschitz continuity of $\alpha$, $f$ and $g$ (cf.~\eqref{eq:alpha-Lipschitz},~\eqref{eq:assumption-f},~\eqref{eq:assumption-g}) together with the splitting~\eqref{eq:error-splitting}, which give
\begin{equation*}
\|\alpha(\varphi)-\alpha(\varphi_h)\|_{\LL^2(\Omega)} \leq L_\alpha \Big( \|e^I_{\varphi}\|_{\LL^2(\Omega)} + \|e^h_{\varphi}\|_{\LL^2(\Omega)} \Big)\, ,
\end{equation*}
and analogously for the differences of $f$ and $g$, with the additional contributions $\|e^I_{\theta}\|_{\LL^2(\Omega)}+\|e^h_{\theta}\|_{\LL^2(\Omega)}$ and the corresponding Lipschitz constants. Using in addition that $\|\tr\btau\|_{\LL^2(\Omega)}\leq\sqrt{d}\,\|\btau\|_{\bbL^2(\Omega)}$, together with~\eqref{eq:error-estimate-proof-control}, we obtain
\begin{equation}\label{eq:error-estimate-proof-3}
|\sR_2| \leq c_5\, \sE_h^{\sf e} + c_6\, \big( \|\be^I_{\bsigma_1}\|_{\bbL^2(\Omega)} + \|e^I_{\varphi}\|_{\LL^2(\Omega)} + \|e^I_{\theta}\|_{\LL^2(\Omega)} \big)\, \sqrt{\sE_h^{\sf e}} + c_7\, \big( \|e^I_{\varphi}\|_{\LL^2(\Omega)} + \sE_h^{\sf e} \big)\, \sqrt{\sD_h^{\sf e}} \, ,
\end{equation}
where $c_5$, $c_6$ and $c_7$ are positive constants depending only on $c_1$, $c_2$, $d$, $\beta$, the Lam\'e parameters, and the Lipschitz constants of $\alpha$, $f$ and $g$.

Gathering~\eqref{eq:error-estimate-proof-2} and~\eqref{eq:error-estimate-proof-3} in~\eqref{eq:error-estimate-proof-1}, and collecting the projection errors into
\begin{equation}\label{eq:error-estimate-proof-FG}
\begin{gathered}
F(t) := \|\dot{\be}^I_{\bsigma_1}\|_{\bbL^2(\Omega)} + \|\be^I_{\bsigma_1}\|_{\bbL^2(\Omega)} + \|\be^I_{\bomega}\|_{\bbL^2(\Omega)} + \|e^I_{\varphi}\|_{\LL^2(\Omega)} + \|e^I_{\theta}\|_{\LL^2(\Omega)}\, , \\[1ex]
G(t) := \|\be^I_{\bsigma_2}\|_{\bbL^2(\Omega)} + \|\be^I_{\bomega}\|_{\bbL^2(\Omega)} + \|\be^I_{\bz}\|_{\bL^2(\Omega)} + \|e^I_{\varphi}\|_{\LL^2(\Omega)}\, ,
\end{gathered}
\end{equation}
we arrive at the differential inequality
\begin{equation}\label{eq:error-estimate-proof-4}
\partial_t \sE_h^{\sf e} + \sD_h^{\sf e} \leq c_5\, \sE_h^{\sf e} + c_8\, F(t)\,\sqrt{\sE_h^{\sf e}} + c_9\, G(t)\,\sqrt{\sD_h^{\sf e}} + c_7\,\sqrt{\sE_h^{\sf e}\, \sD_h^{\sf e}} \, ,
\end{equation}
with $c_8 := \max\{c_3,c_6\}$ and $c_9 := \max\{c_4,c_7\}$. The last two terms on the right-hand side of~\eqref{eq:error-estimate-proof-4} are absorbed by means of Young's inequality, namely
\begin{equation*}
c_9\,G\,\sqrt{\sD_h^{\sf e}} \leq \fracd{1}{4}\,\sD_h^{\sf e} + c_9^2\,G^2 \qan c_7\,\sqrt{\sE_h^{\sf e}\,\sD_h^{\sf e}} \leq \fracd{1}{4}\,\sD_h^{\sf e} + c_7^2\,\sE_h^{\sf e}\, ,
\end{equation*}
whereas $c_8\,F\,\sqrt{\sE_h^{\sf e}} \leq \tfrac{1}{2}\,c_8^2\,F^2 + \tfrac{1}{2}\,\sE_h^{\sf e}$. Substituting these bounds into~\eqref{eq:error-estimate-proof-4}, we deduce that
\begin{equation}\label{eq:error-estimate-proof-5}
\partial_t \sE_h^{\sf e} + \fracd{1}{2}\, \sD_h^{\sf e} \leq C_1\, \sE_h^{\sf e} + C_2\,\big( F(t)^2 + G(t)^2 \big) \, ,
\end{equation}
where $C_1$ and $C_2$ depend only on $c_5$, $c_7$, $c_8$ and $c_9$, and hence, ultimately, only on $\varrho$, $d$, $\beta$, the Lam\'e parameters, $\underline{D}$, $\overline{D}$, and the Lipschitz constants of $\alpha$, $f$ and $g$. In particular, neither $C_1$ nor $C_2$ depends on $h$ or on $t_{\rm end}$.

Integrating~\eqref{eq:error-estimate-proof-5} from $0$ to $t$ and discarding, for the moment, the nonnegative dissipative term, Gr\"onwall's inequality yields
\begin{equation}\label{eq:error-estimate-proof-6}
\sE_h^{\sf e}(t) \leq e^{C_1 t}\, \left( \sE_h^{\sf e}(0) + C_2 \int_0^t \big( F(s)^2 + G(s)^2 \big) \ds \right) \qquad \forall\, t \in (0,t_{\rm end}]\, .
\end{equation}
In turn, integrating~\eqref{eq:error-estimate-proof-5} without discarding $\sD_h^{\sf e}$ and bounding $\int_0^t \sE_h^{\sf e}$ by means of~\eqref{eq:error-estimate-proof-6}, we also obtain
\begin{equation}\label{eq:error-estimate-proof-7}
\|\sE_h^{\sf e}\|_{\LL^\infty(0,t)} + \|\sD_h^{\sf e}\|_{\LL^1(0,t)} \leq C_3\, \left( \sE_h^{\sf e}(0) + \int_0^t \big( F(s)^2 + G(s)^2 \big) \ds \right) \, ,
\end{equation}
where $C_3$ depends only on $C_1$, $C_2$ and $t_{\rm end}$.

It remains to identify the two terms on the right-hand side of~\eqref{eq:error-estimate-proof-7}. On the one hand, since $\be^h_{\bv}(0)=\cero$, $e^h_{\varphi}(0)=0$ and $e^h_{\theta}(0)=0$ (cf.~\eqref{eq:initial-discrete-errors}), the definition of $\sE_h^{\sf e}$ and the boundedness of $\cA_1$ give $\sE_h^{\sf e}(0) \leq c_{10}\,\|\be^h_{\bsigma_1}(0)\|_{\bbL^2(\Omega)}^2$, with $c_{10}$ depending only on $\mu_1$ and $\lambda_1$. On the other hand, squaring~\eqref{eq:error-estimate-proof-FG} and recalling the definition~\eqref{eq:error-consistency-functional} of $\Xi$, we readily see that $\int_0^t (F^2+G^2)\ds \leq c_{11}\,\Xi(t)^2$, with $c_{11}$ depending on $t_{\rm end}$. Hence,
\begin{equation}\label{eq:error-estimate-proof-8}
\|\sE_h^{\sf e}\|_{\LL^\infty(0,t)} + \|\sD_h^{\sf e}\|_{\LL^1(0,t)} \leq C_4\, \Xi(t)^2\, ,
\end{equation}
with $C_4$ depending only on $C_3$, $c_{10}$ and $c_{11}$.

Finally, by~\eqref{eq:error-estimate-proof-control}, estimate~\eqref{eq:error-estimate-proof-8} controls $\be^h_{\bsigma_1}$, $\be^h_{\bv}$, $e^h_{\varphi}$ and $e^h_{\theta}$ in the $\LL^\infty$-in-time norms, and $\be^h_{\bsigma_2}$ and $\be^h_{\bz}$ in the $\LL^2$-in-time norms, all of them with a constant depending only on $C_4$, $c_1$ and $c_2$. The assertion~\eqref{eq:error-estimate} then follows from the splitting~\eqref{eq:error-splitting} and the triangle inequality, the norms of the projection errors thus arising being themselves part of $\Xi(t)^2$. Tracking the dependencies of $c_1,\ldots,c_{11}$ and $C_1,\ldots,C_4$ collected along the proof, the resulting constant depends only on $\cA_1$, $\cA_2$, $\diff$, $\varrho$, the Lipschitz constants of $\alpha$, $f$ and $g$, the Lam\'e parameters, $\beta$, $d$ and $t_{\rm end}$, but not on $h$. This completes the proof.
\end{proof}

\begin{remark}\label{rem:initial-error-term}
The term $\|\be^h_{\bsigma_1}(0)\|_{\bbL^2(\Omega)}$ in~\eqref{eq:error-estimate} is itself of approximation order. Indeed, by~\eqref{eq:initial-discrete-errors} we have $\be^h_{\bsigma_1}(0) = \bPi_h^e\bsigma_1(0) - \bsigma_{1,h}^0$, and since $\bsigma_{1,h}^0$ is the $\va^1$-orthogonal projection of $\bsigma_1(0)$ onto $\bbS_h$, hence quasi-optimal in the $\bbL^2(\Omega)$-norm, the triangle inequality yields $\|\be^h_{\bsigma_1}(0)\|_{\bbL^2(\Omega)} \leq c\,\|\be^I_{\bsigma_1}(0)\|_{\bbL^2(\Omega)}$, with $c>0$ depending only on $\mu_1$ and $\lambda_1$. Alternatively, choosing $\bsigma_{1,h}^0 := \bPi_h^e\big(\cC_1\beps(\bu_0)\big)$ in~\eqref{eq:discrete-initial-data} gives $\be^h_{\bsigma_1}(0)=\cero$ and removes this term altogether.
\end{remark}

\begin{theorem}[Error estimate for the spin tensor]\label{thm:error-estimate-spin}
Under the hypotheses of Theorem~\ref{thm:error-estimate}, there exists $C_{\rm spin}>0$, depending only on the same quantities as the constant in~\eqref{eq:error-estimate} and on the inf-sup constant $\beta_{\rd}$ from {\bf (H1)}, but independent of $h$, such that, for all $t\in(0,t_{\rm end}]$,
\begin{equation}\label{eq:error-estimate-spin}
\|\be_{\bomega}\|_{\LL^2(0,t;\bbL^2(\Omega))}^2 \leq C_{\rm spin}\, \Xi(t)^2\, .
\end{equation}
\end{theorem}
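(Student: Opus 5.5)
We split $\be_{\bomega}=\be^I_{\bomega}+\be^h_{\bomega}$. The projection part $\|\be^I_{\bomega}\|_{\LL^2(0,t;\bbL^2(\Omega))}^2$ already appears in $\Xi(t)^2$, so it suffices to bound the discrete part $\be^h_{\bomega}$. For this we use the inf-sup condition {\bf (H1)} with $[\bw_h,\eeta_h]=[\be^h_{\bv},\be^h_{\bomega}]$. This gives
\begin{equation*}
\beta_{\rd}\,\big(\|\be^h_{\bv}\|_{\bL^2(\Omega)}+\|\be^h_{\bomega}\|_{\bbL^2(\Omega)}\big)\leq \sqrt{2}\sup_{\cero\neq\btau_h\in\bbS_h}\frac{\vb(\btau_h,[\be^h_{\bv},\be^h_{\bomega}])}{\|\btau_h\|_{\bbH(\bdiv;\Omega)}}\, .
\end{equation*}
We then evaluate $\vb(\btau_h,[\be^h_{\bv},\be^h_{\bomega}])$ through the second row of~\eqref{eq:discrete-error-eqs}, tested with $\btau_{2,h}=\btau_h$. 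This is the row without a time derivative, and we prefer it to the first row, which involves $\dot\be^h_{\bsigma_1}$, a quantity not controlled by Theorem~\ref{thm:error-estimate}. The second row yields
\begin{equation*}
\vb(\btau_h,[\be^h_{\bv},\be^h_{\bomega}]) = -\va^2(\be^h_{\bsigma_2}+\be^I_{\bsigma_2},\btau_h) - (\btau_h,\be^I_{\bomega})_\Omega - \tfrac{1}{2\mu_2+d\lambda_2}\big(\alpha(\varphi)-\alpha(\varphi_h),\tr\btau_h\big)_\Omega\, .
\end{equation*}

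We bound each term by Cauchy--Schwarz, the boundedness of $\cA_2$, $\|\tr\btau_h\|\leq\sqrt d\|\btau_h\|$, the Lipschitz continuity of $\alpha$, and $\|\btau_h\|_{\bbL^2}\leq\|\btau_h\|_{\bbH(\bdiv)}$. This gives, pointwise in time,
\begin{equation*}
\|\be^h_{\bomega}\|_{\bbL^2(\Omega)}\leq C\beta_{\rd}^{-1}\big(\|\be_{\bsigma_2}\|_{\bbL^2(\Omega)}+\|\be^I_{\bomega}\|_{\bbL^2(\Omega)}+\|e_{\varphi}\|_{\LL^2(\Omega)}\big)\, .
\end{equation*}
Here we regroup $\be^h_{\bsigma_2}+\be^I_{\bsigma_2}=\be_{\bsigma_2}$, and $\alpha(\varphi)-\alpha(\varphi_h)$ is bounded by $L_\alpha\|e_\varphi\|$. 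We square and integrate over $(0,t)$, using $\|e_\varphi\|_{\LL^2(0,t;\LL^2)}^2\leq t_{\rm end}\|e_\varphi\|_{\LL^\infty(0,t;\LL^2)}^2$. Then~\eqref{eq:error-estimate} from Theorem~\ref{thm:error-estimate} controls $\|\be_{\bsigma_2}\|_{\LL^2(0,t;\bbL^2)}^2$ and this $\varphi$-term by $C\,\Xi(t)^2$, and the $\be^I_{\bomega}$ term is part of $\Xi(t)^2$. A triangle inequality then concludes~\eqref{eq:error-estimate-spin}, with the constant depending additionally on $\beta_{\rd}^{-2}$.

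The key point is the choice of the second error equation rather than the first. Using the first row would bring in $\va^1(\dot\be^h_{\bsigma_1},\btau_h)$, and Theorem~\ref{thm:error-estimate} gives no bound on the time derivative of the discrete stress error. Apart from that, the argument is routine, since (H1) applies directly: all discrete errors take values in the discrete spaces for each $t$.
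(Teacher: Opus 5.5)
Your proposal is correct and matches the paper's proof step for step. Both apply (H1) to $[\be^h_{\bv},\be^h_{\bomega}]$, evaluate $\vb$ through the second (algebraic) row of \eqref{eq:discrete-error-eqs} with the term regrouped as $\va^2(\be_{\bsigma_2},\btau_h)$, bound each term by Cauchy--Schwarz and the Lipschitz continuity of $\alpha$, and conclude by integrating in time, invoking Theorem~\ref{thm:error-estimate}, and using the triangle inequality.
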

\begin{proof}
By using {\bf (H1)} with $[\bw_h,\eeta_h] = [\be_\bv^h,\be_\bomega^h]$ and the second row in the system~\eqref{eq:discrete-error-eqs},
\begin{align*}
&\beta_{\rd} \, \|\be_\bomega^h\|_{\bbL^2(\Omega)} \leq \beta_{\rd} \, \|(\be_\bv^h,\be_\bomega^h)\|_{\bL^2(\Omega)\times \bbL^2(\Omega)} \leq \sup_{\cero\neq \btau_h \in \bbS_h} \frac{\vb(\btau_h,[\be_\bv^h,\be_\bomega^h])}{\|\btau_h\|_{\bbH(\bdiv;\Omega)}} \\
&\quad = \sup_{\cero\neq \btau_h \in \bbS_h} \frac{-\va^2(\be_{\bsigma_2},\btau_h) - (\btau_h,\be_\bomega^I)_\Omega - \frac{1}{2\mu_2+d\lambda_2}(\alpha(\varphi)-\alpha(\varphi_h),\tr(\btau_h))_\Omega}{\|\btau_h\|_{\bbH(\bdiv;\Omega)}} \, .
\end{align*}
We then bound each term on the right-hand side using the boundedness of $\va^2$, the Lipschitz continuity of $\alpha$ (cf.~\eqref{eq:alpha-Lipschitz}), and the fact that $\|\btau_h\|_{\bbL^2(\Omega)} \leq \|\btau_h\|_{\bbH(\bdiv;\Omega)}$, and take squares in the resulting inequality, thus arriving at
\begin{equation*}
\|\be_\bomega^h\|_{\bbL^2(\Omega)}^2 \leq c \, \left\{ \|\be_{\bsigma_2}\|_{\bbL^2(\Omega)}^2 + \|\be_\bomega^I\|_{\bbL^2(\Omega)}^2 + \|e_\varphi\|_{\LL^2(\Omega)}^2 \right\} \, ,\qquad \text{a.e. in }(0,t_{\rm end}]\, ,
\end{equation*}
with $c>0$ depending only on $\beta_{\rd}$, $\cA_2$, $d$, $\mu_2$, $\lambda_2$, and $L_\alpha$. Integrating from $0$ to $t\in(0,t_{\rm end}]$, and bounding $\|e_\varphi\|_{\LL^2(0,t;\LL^2(\Omega))} \leq \sqrt{t_{\rm end}}\, \|e_\varphi\|_{\LL^\infty(0,t;\LL^2(\Omega))}$, the estimate~\eqref{eq:error-estimate} yields
\begin{equation*}
\|\be_\bomega^h\|_{\LL^2(0,t;\bbL^2(\Omega))}^2 \leq \tilde c \; \Xi(t)^2 \, .
\end{equation*}
The assertion~\eqref{eq:error-estimate-spin} then follows from the splitting $\be_{\bomega} = \be^I_{\bomega} + \be^h_{\bomega}$ and the triangle inequality, since $\|\be^I_{\bomega}\|_{\LL^2(0,t;\bbL^2(\Omega))}^2$ is itself part of $\Xi(t)^2$. This completes the proof.
\end{proof}

For the statement of the next result, we let $\Xi_1(t)$ stand for the quantity obtained from $\Xi(t)$ in~\eqref{eq:error-consistency-functional} upon discarding the initial term $\|\be^h_{\bsigma_1}(0)\|_{\bbL^2(\Omega)}^2$ and replacing every projection error by its time derivative, and $\sE_h^{\sf{e}\,,(1)}(t)$, $\sD_h^{\sf{e}\,,(1)}(t)$ for the functionals in~\eqref{eq:error-energy-functionals} with each discrete error replaced by its time derivative. We also abbreviate
\begin{equation*}
J := f(\varphi,\theta) + \fracd{\beta}{2\mu_1+d\lambda_1}\,\tr(\bsigma_1)\, , \qquad
J_h := f(\varphi_h,\theta_h) + \fracd{\beta}{2\mu_1+d\lambda_1}\,\tr(\bsigma_{1,h})\, .
\end{equation*}

\begin{theorem}[Error estimates in the divergence norms]\label{thm:error-estimate-div}
In addition to the hypotheses of Theorem~\ref{thm:error-estimate}, assume that
\begin{enumerate}
\item[\rm (i)] $f,g\in C^1(\R^2)$, with bounded and globally Lipschitz continuous first-order partial derivatives;
\item[\rm (ii)] $\bb \in \HH^1(0,t_{\rm end};\bL^2(\Omega))$;
\item[\rm (iii)] the solution of~\eqref{eq:mixed-weak} satisfies
\begin{gather*}
\bsigma_1 \in \HH^2\big(0,t_{\rm end};\bbH_\star(\bdiv;\Omega)\cap\bbH^s(\Omega)\big)\, , \quad
\bsigma_2 \in \HH^1\big(0,t_{\rm end};\bbH_\star(\bdiv;\Omega)\cap\bbH^s(\Omega)\big)\, , \\
\bz \in \HH^1\big(0,t_{\rm end};\bH_\star(\vdiv;\Omega)\cap\bH^s(\Omega)\big)\, , \quad
\bomega \in \HH^1\big(0,t_{\rm end};\bbL^2(\Omega)\big)\, , \quad
\bv \in \HH^2\big(0,t_{\rm end};\bL^2(\Omega)\big)\, , \\
\varphi,\theta \in \HH^2\big(0,t_{\rm end};\LL^2(\Omega)\big) \cap \rW^{1,\infty}\big(0,t_{\rm end};\LL^\infty(\Omega)\big)\, .
\end{gather*}
\end{enumerate}
Then $\Xi_1(t)<\infty$ and there exists $C_{\rm div}>0$, independent of $h$ and depending only on the same quantities as the constant in~\eqref{eq:error-estimate}, on the bounds and Lipschitz constants of $\alpha'$ and of the first-order partial derivatives of $f$ and $g$, and on $\|\dot{\varphi}\|_{\LL^\infty(0,t_{\rm end};\LL^\infty(\Omega))}+\|\dot{\theta}\|_{\LL^\infty(0,t_{\rm end};\LL^\infty(\Omega))}$, such that, for all $t\in(0,t_{\rm end}]$,
\begin{equation}\label{eq:error-estimate-div}
\begin{aligned}
&\|\bdiv(\be_{\bsigma_1}+\be_{\bsigma_2})\|_{\LL^2(0,t;\bL^2(\Omega))}^2 + \|\vdiv\,\be_{\bz}\|_{\LL^2(0,t;\LL^2(\Omega))}^2
\\
&\quad \leq C_{\rm div}\,\Big\{ \sE_h^{\sf{e}\,,(1)}(0) + \Xi(t)^2 + \Xi_1(t)^2 + \|(\mathrm{I}-\bsP_h^{\bV})\bb\|_{\LL^2(0,t;\bL^2(\Omega))}^2 + \|(\mathrm{I}-\sP_h^{\Phi})\, J \|_{\LL^2(0,t;\LL^2(\Omega))}^2 \Big\}\, .
\end{aligned}
\end{equation}
\end{theorem}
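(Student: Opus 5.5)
We bound the divergences through the third and fifth rows of the error equations, using the commuting properties, and obtain the needed control of $\dot\be^h_\bv$ and $\dot e^h_\varphi$ from a time-differentiated energy argument.

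\textbf{Step 1: reduction to time derivatives.} By {\bf (H2)}, $\bdiv(\be^h_{\bsigma_1}+\be^h_{\bsigma_2})\in\bV_h$. Testing the third row of~\eqref{eq:discrete-error-eqs} with $[\bw_h,\cero]$ gives $\bdiv(\be^h_{\bsigma_1}+\be^h_{\bsigma_2})=\varrho\,\dot\be^h_\bv$. By~\eqref{eq:commuting-stress}, $\bdiv(\be^I_{\bsigma_1}+\be^I_{\bsigma_2})=(\mathrm{I}-\bsP_h^{\bV})\bdiv(\bsigma_1+\bsigma_2)$. Since $\bdiv(\bsigma_1+\bsigma_2)=\varrho\dot\bv-\varrho\bb$ and $\bsP_h^{\bV}$ commutes with $\partial_t$, this equals $\varrho\,\dot\be^I_\bv-\varrho(\mathrm{I}-\bsP_h^{\bV})\bb$. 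The total divergence error is therefore bounded by $\|\dot\be^h_\bv\|+\|\dot\be^I_\bv\|+\|(\mathrm{I}-\bsP_h^{\bV})\bb\|$.

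In the same way, {\bf (H3)} and the fifth row give $\vdiv\be^h_\bz=\dot e^h_\varphi-\sP_h^\Phi(J-J_h)$. Also, $\vdiv\be^I_\bz=(\mathrm{I}-\sP_h^\Phi)\vdiv\bz=(\mathrm{I}-\sP_h^\Phi)(\dot\varphi-J)$. Moreover $\|J-J_h\|\lesssim\|e_\varphi\|+\|e_\theta\|+\|\be_{\bsigma_1}\|$, which Theorem~\ref{thm:error-estimate} bounds by $\Xi$. The claim therefore reduces to bounding $\|\dot\be^h_\bv\|_{\LL^2(0,t;\bL^2)}$ and $\|\dot e^h_\varphi\|_{\LL^2(0,t;\LL^2)}$.

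\textbf{Step 2: differentiated error system.} All discrete spaces are time-independent, and the semidiscrete solution is $C^1$; with $\bb\in\HH^1$ it has enough regularity to differentiate. We differentiate~\eqref{eq:discrete-error-eqs} in time. The left-hand side has the same structure, with every $\be^h$ replaced by $\dot\be^h$. We test with $\dot\be^h_{\bsigma_1},\dot\be^h_{\bsigma_2},[\dot\be^h_\bv,\dot\be^h_\bomega],\dot\be^h_\bz,\dot e^h_\varphi,\dot e^h_\theta$. The $\vb,\cb$ terms again cancel pairwise, giving
\[
\partial_t\sE_h^{\sf e,(1)}+\sD_h^{\sf e,(1)}=\dot\sR_1+\dot\sR_2 ,
\]
where the right-hand sides are the derivatives of the earlier forcing terms. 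The linear consistency terms involve $\ddot\be^I_{\bsigma_1},\dot\be^I_{\bsigma_2},\dot\be^I_\bomega,\dot\be^I_\bz$. They are handled exactly as in~\eqref{eq:error-estimate-proof-2} and give $\Xi_1$.

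\textbf{Step 3: the nonlinear terms, the main obstacle.} These are the time derivatives of the nonlinear differences, for instance
\[
\partial_t\big(\alpha(\varphi)-\alpha(\varphi_h)\big)=\alpha'(\varphi)\,\dot e_\varphi+\big(\alpha'(\varphi)-\alpha'(\varphi_h)\big)\dot\varphi_h .
\]
I would write $\dot\varphi_h=\dot\varphi-\dot e_\varphi$ and use the $\LL^\infty$ bound on $\dot\varphi$ together with the Lipschitz property of $\alpha'$. The term $(\alpha'(\varphi)-\alpha'(\varphi_h))\dot\varphi$ is then $\lesssim\|e_\varphi\|$. The leftover product $(\alpha'(\varphi)-\alpha'(\varphi_h))\dot e_\varphi$ is bounded by $2\sup|\alpha'|\,|\dot e_\varphi|$ pointwise, which is linear in $\dot e_\varphi$. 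This is why assumption (i) requires bounded derivatives, and the same device is applied to $f$ and $g$. The trace coupling term becomes $\tr(\dot\be^I_{\bsigma_1}+\dot\be^h_{\bsigma_1})$. The $\alpha$ term paired with $\dot\be^h_{\bsigma_2}$ is absorbed into $\sD_h^{\sf e,(1)}$ via Young's inequality.

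All other terms are bounded by $C(\sE_h^{\sf e,(1)}+\|e_\varphi\|^2+\|e_\theta\|^2)$ plus projection errors. Here $\|\dot e_\varphi\|\le\|\dot e^I_\varphi\|+\|\dot e^h_\varphi\|$, and $\|\dot e^h_\varphi\|$ is controlled by $\sE_h^{\sf e,(1)}$. Gr\"onwall then yields
\[
\sup_{[0,t]}\sE_h^{\sf e,(1)}\lesssim\sE_h^{\sf e,(1)}(0)+\Xi^2+\Xi_1^2 ,
\]
using Theorem~\ref{thm:error-estimate} to bound $\|e_\varphi\|,\|e_\theta\|$. Finally, integrating over $(0,t)$ gives the $\LL^2$-in-time bounds on $\dot\be^h_\bv$ and $\dot e^h_\varphi$. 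Combined with Step 1, this proves~\eqref{eq:error-estimate-div}; the finiteness of $\Xi_1$ follows from (iii) and the boundedness of the projections.
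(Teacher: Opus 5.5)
Your proposal is correct and follows the paper's proof essentially step by step: you reduce the divergence errors, via {\bf (H2)}--{\bf (H3)} and the third and fifth error rows, to $\dot\be_{\bv}$, $\dot e_{\varphi}$ and data-approximation terms, and then control these by repeating the energy/Gr\"onwall argument of Theorem~\ref{thm:error-estimate} on the time-differentiated error system, with the zeroth-order contributions bounded by $\Xi(t)$. The differences are cosmetic. You split the divergence identities into discrete and projection parts, whereas the paper works with the total errors. You also reach the bound $\|\partial_t[\alpha(\varphi)-\alpha(\varphi_h)]\|_{\LL^2(\Omega)}\lesssim\|\dot e_{\varphi}\|_{\LL^2(\Omega)}+\|e_{\varphi}\|_{\LL^2(\Omega)}$ by substituting $\dot\varphi_h=\dot\varphi-\dot e_{\varphi}$, instead of writing down the paper's identity $\alpha'(\varphi_h)\,\dot e_{\varphi}+(\alpha'(\varphi)-\alpha'(\varphi_h))\,\dot\varphi$ directly.
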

\begin{proof}
First, since $\dot{\bv}_h\in\bV_h$ and $\bdiv(\bbS_h)=\bV_h$ (cf.~{\bf (H2)}), the third row of~\eqref{eq:semidiscrete}, tested with $[\bw_h,\cero]$, gives $\bdiv(\bsigma_{1,h}+\bsigma_{2,h}) = \varrho\,\dot{\bv}_h - \varrho\,\bsP_h^{\bV}\bb$ in $\bV_h$. Together with the (weak) identity $\bdiv(\bsigma_1+\bsigma_2)=\varrho\,\dot{\bv}-\varrho\,\bb$ satisfied by the exact solution, this yields
\begin{equation}\label{eq:error-estimate-div-proof-1}
\bdiv(\be_{\bsigma_1}+\be_{\bsigma_2}) = \varrho\,\dot{\be}_{\bv} - \varrho\,(\mathrm{I}-\bsP_h^{\bV})\bb \qin \bL^2(\Omega) \, .
\end{equation}
Analogously, since $\dot{\varphi}_h\in\Phi_h$ and $\vdiv(\bZ_h)=\Phi_h$ (cf.~{\bf (H3)}), the fifth row of~\eqref{eq:semidiscrete} gives $\vdiv\,\bz_h = \dot{\varphi}_h - \sP_h^{\Phi}J_h$ in $\Phi_h$, whereas the exact solution satisfies $\vdiv\,\bz = \dot{\varphi}-J$ weakly, whence
\begin{equation}\label{eq:error-estimate-div-proof-2}
\vdiv\,\be_{\bz} = \dot{e}_{\varphi} - (\mathrm{I}-\sP_h^{\Phi})J - \sP_h^{\Phi}(J-J_h) \qin \LL^2(\Omega) \, .
\end{equation}
Regarding the last term in~\eqref{eq:error-estimate-div-proof-2}, we note that, by the Lipschitz continuity of $f$ (cf.~\eqref{eq:assumption-f}), the stability of $\sP_h^{\Phi}$ in $\LL^2(\Omega)$, and the bound $\|\tr\btau\|_{\LL^2(\Omega)}\leq\sqrt{d}\,\|\btau\|_{\bbL^2(\Omega)}$, we have
\begin{equation*}
\|\sP_h^{\Phi}(J-J_h)\|_{\LL^2(\Omega)} \leq c\,\big( \|e_{\varphi}\|_{\LL^2(\Omega)}+\|e_{\theta}\|_{\LL^2(\Omega)}+\|\be_{\bsigma_1}\|_{\bbL^2(\Omega)} \big)\, ,
\end{equation*}
so that Theorem~\ref{thm:error-estimate} bounds its $\LL^2(0,t;\LL^2(\Omega))$-norm by $c\,\Xi(t)$. Consequently, it only remains to estimate $\dot{\be}_{\bv}$ and $\dot{e}_{\varphi}$ in the $\LL^2(0,t;\bullet)$-norms in~\eqref{eq:error-estimate-div-proof-1}--\eqref{eq:error-estimate-div-proof-2}.

To this end we differentiate the system~\eqref{eq:discrete-error-eqs} with respect to $t$, which is legitimate by virtue of the assumptions {\rm (i)}--{\rm (iii)}: hypothesis~{\rm (iii)} provides the time derivatives of the exact solution entering the projection errors, while~{\rm (ii)} guarantees, through the differentiability in time of the right-hand side of~\eqref{eq:semidiscrete-wp-proof-2}, that the semidiscrete solution is twice differentiable in time. Since the projection operators are time-independent, they commute with $\partial_t$, and therefore the differentiated system retains exactly the structure of~\eqref{eq:discrete-error-eqs}, with each error replaced by its time derivative; in particular, its third row again has a vanishing right-hand side, which is what makes the argument of Theorem~\ref{thm:error-estimate} applicable verbatim. The only new contributions are those coming from the nonlinear terms, which are handled by means of the identity
\begin{equation*}
\partial_t\big[\alpha(\varphi)-\alpha(\varphi_h)\big] = \alpha'(\varphi_h)\,\dot{e}_{\varphi} + \big(\alpha'(\varphi)-\alpha'(\varphi_h)\big)\,\dot{\varphi}\, ,
\end{equation*}
and its analogues for $f$ and $g$. Recalling that $\alpha'$ is bounded and Lipschitz continuous (cf.~\eqref{eq:def-Hill-function}), and using~{\rm (i)} together with $\dot{\varphi},\dot{\theta}\in\LL^\infty(0,t_{\rm end};\LL^\infty(\Omega))$, we obtain
\begin{equation*}
\big\|\partial_t\big[\alpha(\varphi)-\alpha(\varphi_h)\big]\big\|_{\LL^2(\Omega)} \leq c\,\big( \|\dot{e}_{\varphi}\|_{\LL^2(\Omega)} + \|e_{\varphi}\|_{\LL^2(\Omega)} \big)\, ,
\end{equation*}
and similarly for $f$ and $g$, with the additional contributions $\|\dot{e}_{\theta}\|_{\LL^2(\Omega)}+\|e_{\theta}\|_{\LL^2(\Omega)}$. These are first-order terms of the same nature as those treated in~\eqref{eq:error-estimate-proof-3}, together with zeroth-order terms which Theorem~\ref{thm:error-estimate} controls by $\Xi(t)$.

We may then test the differentiated system with the time derivatives of the discrete errors and repeat the computations leading to~\eqref{eq:error-estimate-proof-8}, thus arriving at
\begin{equation}\label{eq:error-estimate-div-proof-3}
\|\sE_h^{\sf{e}\,,(1)}\|_{\LL^\infty(0,t)} + \|\sD_h^{\sf{e}\,,(1)}\|_{\LL^1(0,t)} \leq c\, \Big( \sE_h^{\sf{e}\,,(1)}(0) + \Xi(t)^2 + \Xi_1(t)^2 \Big)\, ,
\end{equation}
where $\Xi_1(t)^2$ collects the consistency terms and $\Xi(t)^2$ accounts for the zeroth-order contributions identified above. In particular, $\dot{\be}^h_{\bv}$ and $\dot{e}^h_{\varphi}$ are bounded in the $\LL^2(0,t;\bullet)$-norms by the right-hand side of~\eqref{eq:error-estimate-div-proof-3}, and so are $\dot{\be}_{\bv}$ and $\dot{e}_{\varphi}$, upon adding the projection errors $\dot{\be}^I_{\bv}$ and $\dot{e}^I_{\varphi}$, both of which are part of $\Xi_1(t)$.

The assertion~\eqref{eq:error-estimate-div} follows by taking $\LL^2(0,t;\bullet)$-norms in~\eqref{eq:error-estimate-div-proof-1} and~\eqref{eq:error-estimate-div-proof-2} and inserting the bounds just obtained.
\end{proof}

\begin{remark}\label{rem:initial-time-derivative}
The quantity $\sE_h^{\sf{e}\,,(1)}(0)$ is not a datum of the problem: the initial values $\dot{\be}^h_{\bsigma_1}(0)$, $\dot{\be}^h_{\bv}(0)$, $\dot{e}^h_{\varphi}(0)$ and $\dot{e}^h_{\theta}(0)$ are obtained by evaluating~\eqref{eq:mixed-weak} and~\eqref{eq:semidiscrete} at $t=0$. It is of approximation order provided the discrete initial data are chosen compatibly with the equations at $t=0$.
\end{remark}

\section{Specific finite element spaces}
\label{sec:FE-examples}

\subsection{Stable families of finite element spaces and convergence rates}

We now exhibit families of finite element spaces satisfying {\bf (H1)}--{\bf (H5)}, and translate the abstract estimates of Section~\ref{sec:errors} into rates of convergence. We begin by setting the local spaces to be employed. Given an integer $l\geq 0$ and an element $K\in\cT_h$, we denote by $\mathrm{P}_l(K)$ the space of polynomials of degree at most $l$ on $K$, and by $\mathbf{P}_l(K)$ and $\mathbb{P}_l(K)$ its vector- and tensor-valued counterparts. We denote by $\wt{\PP}_l(K)$ the space of homogeneous polynomials of degree $l$ on $K$, and we let
\begin{equation*}
\mathbf{RT}_l(K) := \bP_l(K) \oplus \wt{\PP}_l(K)\,\bx,
\end{equation*}
be the local Raviart--Thomas space, where $\bx$ is the position vector in $\R^d$. Its tensorial counterpart $\mathbb{RT}_l(K)$ consists of the tensors whose rows belong to $\mathbf{RT}_l(K)$. We let $b_K$ be the bubble function on $K$, which is given by the product of its $d+1$ barycentric coordinates. The local bubble space of order $l$ is then set as 
\begin{equation*}
\bB_l(K) := 
\begin{cases}
\bcurl(b_K\, \PP_l(K)) &\text{if }d=2\, , \\
\bcurl(b_K\, \bP_l(K)) &\text{if }d=3\, ,
\end{cases}
\end{equation*}
where the curl operators are defined as $\bcurl(v) := \big( \frac{\partial v}{\partial x_2}, -\frac{\partial v}{\partial x_1}\big)$ for $v:K\to \R$ (if $d=2$), and $\bcurl(\bv) := \nabla \times \bv$ for $\bv:K\to \R^3$ (if $d=3$). Finally, $\mathbb{B}_l(K)$ denotes the space of tensor functions in which each row belongs to $\bB_l(K)$.

\medskip
\noindent\textbf{The viscoelastic block.} Given $k\geq 0$, for the stress, the velocity and the spin we consider
\begin{gather*}
\bbS_h := \big\{ \btau \in \bbH_\star(\bdiv;\Omega): \ \btau|_K \in \bbP_{k+1}(K) \ \ \forall\, K \in \cT_h \big\}\, , \\[0.8ex]
\bbS_h^{\rm P} := \big\{ \btau \in \bbH_\star(\bdiv;\Omega): \ \btau|_K \in \mathbb{RT}_{k}(K)\oplus\mathbb{B}_{k}(K) \ \ \forall\, K \in \cT_h \big\}\, , \\[0.8ex]
\bV_h := \big\{ \bw \in \bL^2(\Omega): \ \bw|_K \in \bP_{k}(K) \ \ \forall\, K \in \cT_h \big\}\, , \\[0.8ex]
\bbK_h^{\rm c} := \big\{ \eeta \in \bbL^2_{\sk}(\Omega)\cap C^0(\overline\Omega)^{d\times d}: \ \eeta|_K \in \bbP_{k+1}(K) \ \ \forall\, K \in \cT_h \big\}\, , \\[0.8ex]
\bbK_h^{\rm d} := \big\{ \eeta \in \bbL^2_{\sk}(\Omega): \ \eeta|_K \in \bbP_{k}(K) \ \ \forall\, K \in \cT_h \big\}\, ,
\end{gather*}
where we note that $\bbS_h$ corresponds to the tensorial version of the $\mathrm{BDM}_{k+1}$ space. The following three choices are admissible:
\begin{enumerate}
\item[\rm (a)] The triplet $(\bbS_h,\bV_h,\bbK_h^{\rm d})$, which is the Arnold--Falk--Winther family~\cite{afw07}.
\item[\rm (b)] The triplet $(\bbS_h,\bV_h,\bbK_h^{\rm c})$, which is the Farhloul--Fortin family introduced for two dimensions in~\cite{farhloul97} and extended to three dimensions in~\cite{bbf09}.
\item[\rm (c)] The triplet $(\bbS_h^{\rm P},\bV_h,\bbK_h^{\rm c})$, which is the PEERS element~\cite{peers,Lonsing-Verfurth}.
\end{enumerate}
All of them satisfy {\bf (H2)} and the inf-sup condition of {\bf (H1)} with $\beta_{\rd}$ independent of $h$. We refer to~\cite{boffi13} and to the survey~\cite{lee16-ACM} for a unified treatment of these and other weakly symmetric elements. Moreover, for each of them, there exists an interpolation operator $\bPi_h^e$ satisfying \eqref{eq:commuting-stress} and, in addition, the weak symmetry property \eqref{eq:weak-symmetry-property-interpolation-operator}, whose existence was established for general classes of weakly symmetric stress elements in \cite{arnoldlee14}; hypothesis {\bf (H4)} is therefore fulfilled.

\begin{remark}\label{rem:FF-vs-PEERS}
The Farhloul--Fortin (or Boffi--Brezzi--Fortin) family {\rm (b)} may be regarded as an improvement of PEERS: the two elements share the same structure, but the former approximates the stress with one additional order of accuracy. This is what makes superconvergence of the displacement available, through a post-processing in the spirit of~\cite{stenberg88}, without invoking full elliptic regularity. We will discuss this in detail in Section~\ref{sec:superconvergence}.
\end{remark}

\medskip
\noindent\textbf{The reaction-diffusion block.} For the diffusive flux, the calcium concentration, and the amount of iIT, we take
\begin{gather*}
\bZ_h^{\mathrm{BDM}} := \big\{ \br \in \bH_\star(\vdiv;\Omega): \ \br|_K \in \bP_{k+1}(K) \ \ \forall\, K\in\cT_h \big\}\, , \\[0.8ex]
\bZ_h^{\mathrm{RT}} := \big\{ \br \in \bH_\star(\vdiv;\Omega): \ \br|_K \in \mathbf{RT}_{k}(K) \quad \forall\, K\in\cT_h \big\}\, , \\[0.8ex]
\Phi_h = \Theta_h := \big\{ \psi \in \LL^2(\Omega): \ \psi|_K \in \PP_{k}(K) \ \ \forall\, K\in\cT_h \big\}\, .
\end{gather*}
The two possible options are $(\bZ_h,\Phi_h,\Theta_h)$ with $\bZ_h=\bZ_h^{\rm BDM}$ or $\bZ_h^{\rm RT}$. In any case, $\vdiv(\bZ_h)=\Phi_h$ (cf. \textbf{(H3)}), and the associated $\mathrm{BDM}$ (or Raviart--Thomas) interpolation operator $\wt\bPi_h$ satisfies the commuting property~\eqref{eq:commuting-flux} (see, e.g.,~\cite{boffi13}), so {\bf (H5)} is fulfilled.

We now collect some approximation properties for the spaces listed above. Since $\bbK_h^{\rm c}$, $\bV_h$, $\Phi_h$ and $\Theta_h$ contain the local spaces $\bbP_{k+1}(K)$, $\bP_k(K)$, $\PP_k(K)$ and $\PP_k(K)$, respectively, the $\LL^2$-projections onto them satisfy the standard estimates
\begin{equation}\label{eq:approximation-properties-L2}
\begin{gathered}
\|\eeta - \bsP_h^{\bbK}\eeta\|_{\bbL^2(\Omega)} \leq C\,h^{\ell}\,|\eeta|_{\bbH^{\ell}(\Omega)} \qquad \forall\, \eeta \in \bbL^2_{\sk}(\Omega)\cap\bbH^{\ell}(\Omega)\, , \quad 0\leq \ell\leq k+2\, , \\[1ex]
\|\bw - \bsP_h^{\bV}\bw\|_{\bL^2(\Omega)} \leq C\,h^{\ell}\,|\bw|_{\bH^{\ell}(\Omega)} \qquad \forall\, \bw \in \bH^{\ell}(\Omega)\, , \quad 0\leq \ell\leq k+1\, , \\[1ex]
\|\psi - \sP_h^{\Phi}\psi\|_{\LL^2(\Omega)} \leq C\,h^{\ell}\,|\psi|_{\HH^{\ell}(\Omega)} \qquad \forall\, \psi \in \HH^{\ell}(\Omega)\, , \quad 0 \leq \ell\leq k+1\, , \\[1ex]
\|\vartheta - \sP_h^{\Theta}\vartheta\|_{\LL^2(\Omega)} \leq C\,h^{\ell}\,|\vartheta|_{\HH^{\ell}(\Omega)} \qquad \forall\, \vartheta \in \HH^{\ell}(\Omega)\, , \quad 0 \leq \ell\leq k+1\, ,
\end{gathered}
\end{equation}
with $C>0$ independent of $h$ (see, e.g.,~\cite{ern-guermond-1,gatica14,boffi13}). In turn, the interpolation operators of {\bf (H4)} and {\bf (H5)} are bounded on $\bbH^{\ell}(\Omega)$ and $\bH^{\ell}(\Omega)$ only for $\ell>s$, with $s\in(1/2,1)$ as in~\eqref{eq:commuting-stress} and~\eqref{eq:commuting-flux}, and they satisfy
\begin{equation}\label{eq:approximation-properties-Hdiv}
\begin{gathered}
\|\btau - \bPi_h^e\btau\|_{\bbL^2(\Omega)} \leq C\,h^{\ell}\,|\btau|_{\bbH^{\ell}(\Omega)} \qquad \forall\, \btau \in \bbH_\star(\bdiv;\Omega)\cap\bbH^{\ell}(\Omega)\, , \\[1ex]
\|\br - \wt\bPi_h\br\|_{\bL^2(\Omega)} \leq C\,h^{\ell}\,|\br|_{\bH^{\ell}(\Omega)} \qquad \forall\, \br \in \bH_\star(\vdiv;\Omega)\cap\bH^{\ell}(\Omega)\, ,
\end{gathered}
\end{equation}
for $s<\ell\leq k+1$ in the case of $\bbS_h^{\rm P}$ and $\bZ_h^{\rm RT}$, and for $s<\ell\leq k+2$ in the case of $\bbS_h$ and $\bZ_h^{\rm BDM}$. We refer to~\cite[Section 2.5.2]{boffi13} for the classical $\mathrm{BDM}$ and $\mathrm{RT}$ interpolants, and to~\cite{arnoldlee14} for the operator $\bPi_h^e$ associated with weakly symmetric stress elements. We stress that, since the divergences $\bdiv\,\btau$ and $\vdiv\,\br$ never enter the consistency functional $\Xi$ of~\eqref{eq:error-consistency-functional}, it is not necessary to require more regularity from them.

The following result establishes the convergence rates associated with the mixed scheme~\eqref{eq:semidiscrete} and the finite element spaces specified above.

\begin{theorem}[Rates of convergence]\label{thm:rates}
Let $k\geq 0$, let the discrete spaces be given by the family {\rm (b)} with either choice of $\bZ_h$ in the reaction-diffusion block, and let the hypotheses of Theorem~\ref{thm:error-estimate} hold. Furthermore, assume that the solution of~\eqref{eq:mixed-weak} satisfies
\begin{gather*}
\bsigma_1 \in \HH^1\big(0,t_{\rm end};\bbH^{k+1}(\Omega)\big)\, , \quad
\bsigma_2, \bomega \in \LL^2\big(0,t_{\rm end};\bbH^{k+1}(\Omega)\big)\, , \quad
\bz \in \LL^2\big(0,t_{\rm end};\bH^{k+1}(\Omega)\big)\, , \\
\bv \in \LL^\infty\big(0,t_{\rm end};\bH^{k+1}(\Omega)\big)\, , \qan
\varphi,\theta \in \LL^\infty\big(0,t_{\rm end};\HH^{k+1}(\Omega)\big)\, .
\end{gather*}
In addition, let the discrete initial datum for the elastic stress be chosen as $\bsigma_{1,h}^0 := \bPi_h^e\big(\cC_1\beps(\bu_0)\big)$. Then there exists $C>0$, independent of $h$, such that, for all $t\in(0,t_{\rm end}]$,
\begin{equation}\label{eq:rates}
\begin{aligned}
&\|\be_{\bsigma_1}\|_{\LL^\infty(0,t;\bbL^2(\Omega))} + \|\be_{\bv}\|_{\LL^\infty(0,t;\bL^2(\Omega))} + \|e_{\varphi}\|_{\LL^\infty(0,t;\LL^2(\Omega))} + \|e_{\theta}\|_{\LL^\infty(0,t;\LL^2(\Omega))} \\
&\qquad + \|\be_{\bsigma_2}\|_{\LL^2(0,t;\bbL^2(\Omega))} + \|\be_{\bz}\|_{\LL^2(0,t;\bL^2(\Omega))} + \|\be_{\bomega}\|_{\LL^2(0,t;\bbL^2(\Omega))} \leq C\,h^{k+1}\, .
\end{aligned}
\end{equation}
\end{theorem}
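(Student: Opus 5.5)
The plan is to combine Theorem~\ref{thm:error-estimate} and Theorem~\ref{thm:error-estimate-spin}, which already bound every quantity on the left-hand side of~\eqref{eq:rates} by $C\,\Xi(t)^2$, with $C$ independent of $h$. Since family~(b) satisfies {\bf (H1)}--{\bf (H5)}, as recorded above, both results apply. It then suffices to show that $\Xi(t)\leq C\,h^{k+1}$, and then take square roots. Because we choose $\bsigma_{1,h}^0:=\bPi_h^e(\cC_1\beps(\bu_0))=\bPi_h^e\bsigma_1(0)$, the initial term $\|\be^h_{\bsigma_1}(0)\|_{\bbL^2(\Omega)}$ vanishes by~\eqref{eq:initial-discrete-errors} (cf. Remark~\ref{rem:initial-error-term}). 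So $\Xi(t)^2$ reduces to the eight projection-error norms in~\eqref{eq:error-consistency-functional}.

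I would bound these eight norms one at a time, using~\eqref{eq:approximation-properties-L2}--\eqref{eq:approximation-properties-Hdiv} with $\ell=k+1$, which lies in the admissible range for each space.

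\textbf{Stresses.} For $\bbS_h$ (BDM$_{k+1}$) the range is $s<\ell\leq k+2$. Since $k+1\geq 1>s$, we get
\[
\|\be^I_{\bsigma_1}\|_{\bbL^2(\Omega)}\leq Ch^{k+1}|\bsigma_1|_{\bbH^{k+1}(\Omega)}, \qquad \|\be^I_{\bsigma_2}\|_{\bbL^2(\Omega)}\leq Ch^{k+1}|\bsigma_2|_{\bbH^{k+1}(\Omega)}.
\]
\textbf{Time derivative of $\bsigma_1$.} $\bPi_h^e$ is linear and time-independent, so it commutes with $\partial_t$ and $\dot{\be}^I_{\bsigma_1}=\dot\bsigma_1-\bPi_h^e\dot\bsigma_1$. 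This is bounded by $Ch^{k+1}|\dot\bsigma_1|_{\bbH^{k+1}(\Omega)}$, and the $\HH^1(0,t_{\rm end};\bbH^{k+1}(\Omega))$ assumption on $\bsigma_1$ controls both its $\LL^2$-in-time norm and the $\LL^\infty$-in-time norm of $\be^I_{\bsigma_1}$. The latter uses the embedding $\HH^1(0,t_{\rm end};X)\hookrightarrow C([0,t_{\rm end}];X)$.

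\textbf{Spin.} $\bbK_h^{\rm c}$ contains $\bbP_{k+1}$, so $\ell=k+1\leq k+2$ is allowed and the $\LL^2(0,t;\bbH^{k+1}(\Omega))$ regularity of $\bomega$ gives order $h^{k+1}$.

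\textbf{Flux.} For either $\bZ_h^{\rm RT}$ ($\ell\leq k+1$) or $\bZ_h^{\rm BDM}$ ($\ell\leq k+2$), $\ell=k+1$ is admissible and gives order $h^{k+1}$ in $\LL^2(0,t;\bL^2(\Omega))$.

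\textbf{Velocity and concentrations.} $\bv$, $\varphi$ and $\theta$ are approximated by piecewise $\PP_k$ functions with $\ell=k+1$. With the stated $\LL^\infty(0,t_{\rm end};\HH^{k+1})$ regularity, this gives the $\LL^\infty$-in-time bounds of order $h^{k+1}$.

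Collecting these bounds yields $\Xi(t)^2\leq Ch^{2(k+1)}$, with $C$ depending on the listed Sobolev norms of the solution and on $t_{\rm end}$ (through the $\LL^2$-in-time integrals).

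The step needing most care is checking that the regularity required by Theorem~\ref{thm:error-estimate} is met, namely $\bsigma_1\in\HH^1(0,t_{\rm end};\bbH_\star(\bdiv;\Omega)\cap\bbH^s(\Omega))$ with $s<1\leq k+1$, and that the interpolants are well defined. This follows from the assumed $\bbH^{k+1}$ regularity together with the standing hypotheses of that theorem. A second point to watch is that the rate is capped by $\bV_h$, $\Phi_h$ and $\Theta_h$ at $k+1$, even though the stress space could give $k+2$. So no extra stress regularity is needed, and the final estimate~\eqref{eq:rates} follows.
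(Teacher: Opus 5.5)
Your proposal is correct and follows the paper's proof: insert \eqref{eq:approximation-properties-L2}--\eqref{eq:approximation-properties-Hdiv} with $\ell=k+1$ into $\Xi(t)$, note that $\be^h_{\bsigma_1}(0)=\cero$ by the choice of $\bsigma_{1,h}^0$, and conclude from Theorems~\ref{thm:error-estimate} and~\ref{thm:error-estimate-spin}. Your term-by-term check spells out what the paper leaves implicit, namely the commutation of $\bPi_h^e$ with $\partial_t$ and the embedding $\HH^1(0,t_{\rm end};X)\hookrightarrow C([0,t_{\rm end}];X)$ used for the $\LL^\infty$-in-time stress term.
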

\begin{proof}
It suffices to insert~\eqref{eq:approximation-properties-L2} with $\ell=k+1$, and~\eqref{eq:approximation-properties-Hdiv} with $\ell=k+1$, into the definition~\eqref{eq:error-consistency-functional} of $\Xi$, which gives $\Xi(t)\leq C\,h^{k+1}$. Note that $\be^h_{\bsigma_1}(0)=\cero$ by the choice of $\bsigma_{1,h}^0$ (cf.~Remark~\ref{rem:initial-error-term}). The assertion then follows from Theorems~\ref{thm:error-estimate} and~\ref{thm:error-estimate-spin}.
\end{proof}

\begin{theorem}[Rates of convergence in the divergence norms]\label{thm:rates-div}
In addition to the hypotheses of Theorem~\ref{thm:rates}, let those of Theorem~\ref{thm:error-estimate-div} hold, and assume that the time derivatives $\dot{\bsigma}_1,\dot{\bsigma}_2,\dot{\bomega},\dot{\bz},\dot{\bv},\dot{\varphi},\dot{\theta}$ enjoy the regularity required from the corresponding unknowns in Theorem~\ref{thm:rates}, that the datum $\bb \in \LL^2(0,t_{\rm end};\bH^{k+1}(\Omega))$, and that $f\in C^{k+1}(\R^2)$ with bounded derivatives up to order $k+1$. Then, with $C>0$ independent of $h$, and for all $t\in(0,t_{\rm end}]$,
\begin{equation}\label{eq:rates-div}
\|\bdiv(\be_{\bsigma_1}+\be_{\bsigma_2})\|_{\LL^2(0,t;\bL^2(\Omega))} + \|\vdiv\,\be_{\bz}\|_{\LL^2(0,t;\LL^2(\Omega))} \leq C\,h^{k+1} \, .
\end{equation}
\end{theorem}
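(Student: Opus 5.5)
The plan is to feed the approximation properties into the abstract estimate of Theorem~\ref{thm:error-estimate-div} and check that each term on its right-hand side is $O(h^{2(k+1)})$. The bound $\Xi(t)\leq C\,h^{k+1}$ is already available from the proof of Theorem~\ref{thm:rates}; that proof used~\eqref{eq:approximation-properties-L2} and~\eqref{eq:approximation-properties-Hdiv} with $\ell=k+1$ and the choice $\bsigma_{1,h}^0=\bPi_h^e(\cC_1\beps(\bu_0))$. For $\Xi_1(t)$, each projection error is replaced by its time derivative. Since $\bPi_h^e$, $\wt\bPi_h$ and the $\LL^2$-projections are time-independent and linear, $\partial_t(\mathrm{I}-\bPi_h^e)\bsigma_i=(\mathrm{I}-\bPi_h^e)\dot{\bsigma}_i$, and similarly for the other projections. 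The assumed regularity of $\dot{\bsigma}_1,\dot{\bsigma}_2,\dot{\bomega},\dot{\bz},\dot{\bv},\dot{\varphi},\dot{\theta}$ then gives $\Xi_1(t)\leq C\,h^{k+1}$. The term $\|\ddot{\be}^I_{\bsigma_1}\|$ needs $\ddot{\bsigma}_1\in\LL^2(0,t_{\rm end};\bbH^{k+1})$, which follows since $\dot{\bsigma}_1$ is assumed to be as regular as $\bsigma_1$ in Theorem~\ref{thm:rates}. The data term is handled directly: $\|(\mathrm{I}-\bsP_h^{\bV})\bb\|_{\LL^2(0,t;\bL^2)}\leq C\,h^{k+1}\|\bb\|_{\LL^2(0,t;\bH^{k+1})}$.

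For the term $(\mathrm{I}-\sP_h^{\Phi})J$ I will show $J\in\LL^2(0,t_{\rm end};\HH^{k+1}(\Omega))$. The piece $\tr\bsigma_1$ is fine because $\bsigma_1\in\LL^\infty(0,t_{\rm end};\bbH^{k+1})$. For $f(\varphi,\theta)$ I will apply the chain rule (Faà di Bruno) using $f\in C^{k+1}$ with bounded derivatives and $\varphi,\theta\in\LL^\infty(0,t_{\rm end};\HH^{k+1})$. I expect this to be the main obstacle. Products of lower-order derivatives of $\varphi,\theta$ must be placed in $\LL^2$, which needs either Sobolev embeddings (for instance $\HH^{k+1}\hookrightarrow \rW^{j,p}$ when $k+1>d/2$) or an $\LL^\infty$ bound on the gradients. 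For $k=0$ the composition estimate $\|\nabla f(\varphi,\theta)\|\leq C(\|\nabla\varphi\|+\|\nabla\theta\|)$ is immediate. For higher $k$ I will invoke the standard Moser-type composition estimate in $\HH^{k+1}\cap\LL^\infty$. If necessary I will state that the required $\rW^{1,\infty}$-type bounds are implicitly part of the assumed regularity.

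The last term is the initial quantity $\sE_h^{\sf e,(1)}(0)$, which is not a datum (Remark~\ref{rem:initial-time-derivative}). I will bound it by $C\,h^{2(k+1)}$ by evaluating the discrete error equations~\eqref{eq:discrete-error-eqs} at $t=0$. With $\be^h_{\bsigma_1}(0)=\cero$, $\be^h_{\bv}(0)=\cero$, $e^h_{\varphi}(0)=0$ and $e^h_{\theta}(0)=0$, the $\theta$-row gives $\|\dot e^h_\theta(0)\|\leq C\|g(\varphi_0,\theta_0)-g(\sP_h^\Phi\varphi_0,\sP_h^\Theta\theta_0)\|\leq C\,h^{k+1}$. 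The $\varphi$-row gives $\dot e^h_\varphi(0)$ in terms of $\vdiv\,\be^h_{\bz}(0)$ and consistency terms. To control $\be^h_{\bz}(0)$, I will test the fourth row with $\be^h_{\bz}(0)$; this uses $e^h_\varphi(0)=0$ and yields $\|\be^h_{\bz}(0)\|\leq C\|\be^I_{\bz}(0)\|$. Controlling its divergence is more delicate and may require an inverse estimate or a compatible discrete initialisation. The mechanical rows are treated the same way: the algebraic second row with $\be^h_{\bv}(0)=\cero$, together with~{\bf (H1)}, controls $\be^h_{\bsigma_2}(0)$ and $\be^h_{\bomega}(0)$ by $O(h^{k+1})$ projection errors. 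The third row then bounds $\dot{\be}^h_{\bv}(0)$ through $\bdiv(\be^h_{\bsigma_1}+\be^h_{\bsigma_2})(0)$, and here I would again use a compatible initial choice. If these divergence terms resist, I will assume that the discrete initial data are chosen compatibly, as in Remark~\ref{rem:initial-time-derivative}, so that $\sE_h^{\sf e,(1)}(0)\leq C\,h^{2(k+1)}$.

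Summing the bounds gives the right-hand side of~\eqref{eq:error-estimate-div} as $O(h^{2(k+1)})$, and taking square roots yields~\eqref{eq:rates-div}.
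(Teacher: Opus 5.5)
Your route is the paper's. Its proof consists exactly of three steps: $\Xi_1(t)\leq C\,h^{k+1}$ from the regularity of the time derivatives; $J\in\LL^2(0,t_{\rm end};\HH^{k+1}(\Omega))$, so that both data terms are bounded through \eqref{eq:approximation-properties-L2} with $\ell=k+1$; and an appeal to Theorem~\ref{thm:error-estimate-div}. The composition step you single out is not an obstacle. For $k\geq 1$ and $d\leq 3$ one has $\HH^{k+1}(\Omega)\hookrightarrow\LL^\infty(\Omega)$, so the Moser-type estimate you cite applies with $f\in C^{k+1}$ and needs no extra $\rW^{1,\infty}$ hypothesis; for $k=0$ the chain rule suffices. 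The delicate point is the one you reach at the end, and there the paper is no more complete than you are. Its proof never mentions $\sE_h^{\sf{e}\,,(1)}(0)$ and tacitly relies on the compatibility proviso of Remark~\ref{rem:initial-time-derivative}, which is exactly your fallback. So, along this route, neither argument establishes the statement with the initial data fixed in Theorem~\ref{thm:rates}.

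The difficulty is genuine. At $t=0$ one has $\varrho\,\dot{\be}^h_{\bv}(0)=\bdiv\,\be^h_{\bsigma_2}(0)$ and $\dot e^h_{\varphi}(0)=\vdiv\,\be^h_{\bz}(0)+\sP_h^{\Phi}(\cdots)$. The algebraic rows control $\be^h_{\bsigma_2}(0)$ and $\be^h_{\bz}(0)$ only in $\LL^2$ and only at order $h^{k+1}$; the $\alpha$-term alone contributes $L_\alpha\|e^I_{\varphi}(0)\|_{\LL^2(\Omega)}$. An inverse inequality on quasi-uniform meshes therefore yields merely $O(h^{k})$ for these divergences.

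You can remove the extra hypothesis by not differentiating the whole system and using instead a parabolic-type estimate in \eqref{eq:discrete-error-eqs}. Test the fifth row with $\dot e^h_{\varphi}$ and the time-differentiated fourth row with $\be^h_{\bz}$; this gives
\[
\|\dot e^h_{\varphi}\|_{\LL^2(\Omega)}^2+\tfrac12\,\partial_t\,\ca(\be^h_{\bz},\be^h_{\bz})=-\,\ca(\dot{\be}^I_{\bz},\be^h_{\bz})+\Big(f(\varphi,\theta)-f(\varphi_h,\theta_h)+\tfrac{\beta}{2\mu_1+d\lambda_1}\tr(\be_{\bsigma_1}),\dot e^h_{\varphi}\Big)_\Omega\, .
\]
Next, test the third row with $[\dot{\be}^h_{\bv},\dot{\be}^h_{\bomega}]$ and the time-differentiated second row with $\be^h_{\bsigma_1}+\be^h_{\bsigma_2}$. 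This shows that $\varrho\|\dot{\be}^h_{\bv}\|_{\bL^2(\Omega)}^2+\tfrac12\partial_t\va^2(\be^h_{\bsigma_2},\be^h_{\bsigma_2})$ equals the time derivative of the right-hand side of the second row, evaluated at $\be^h_{\bsigma_1}+\be^h_{\bsigma_2}$, minus $\va^2(\dot{\be}^h_{\bsigma_2},\be^h_{\bsigma_1})$. Integrate that last term by parts in time, using $\be^h_{\bsigma_1}(0)=\cero$ and the bound $\|\dot{\be}^h_{\bsigma_1}\|\leq C\big(\|\be^h_{\bsigma_2}\|+\|\dot{\be}^I_{\bsigma_1}\|+\|\be^I_{\bsigma_2}\|+\|e_{\varphi}\|\big)$, which follows by subtracting the first two rows. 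The contribution of $\partial_t[\alpha(\varphi)-\alpha(\varphi_h)]$ is absorbed if you do the calcium estimate first. Only the $\LL^2$ values of $\be^h_{\bz}(0)$ and $\be^h_{\bsigma_2}(0)$ enter, and these are exactly the bounds you already derived. The remaining terms are controlled by $\Xi_1(t)$ and, for the zeroth-order ones, by Theorem~\ref{thm:error-estimate}.
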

\begin{proof}
The additional regularity of the time derivatives yields $\Xi_1(t)\leq C\,h^{k+1}$, exactly as in the proof of Theorem~\ref{thm:rates}. Moreover, $\bb\in\LL^2(0,t_{\rm end};\bH^{k+1}(\Omega))$ and the assumption on $f$ give $J\in\LL^2(0,t_{\rm end};\HH^{k+1}(\Omega))$, whence~\eqref{eq:approximation-properties-L2} with $\ell=k+1$ bounds the two data approximation terms in~\eqref{eq:error-estimate-div} by $C\,h^{k+1}$. The assertion then follows from Theorem~\ref{thm:error-estimate-div}.
\end{proof}

\begin{remark}\label{rem:rates-other-families}
With the Farhloul--Fortin family, the rate in~\eqref{eq:rates}--\eqref{eq:rates-div} is dictated by the velocity and the concentrations, which are approximated by discontinuous piecewise polynomials of degree $k$, whereas the stress, and the flux if $\bZ_h = \bZ_h^{\rm BDM}$, are approximated with one extra order. This extra accuracy is not visible in~\eqref{eq:rates} because of the coupling, but the one gained by the stress is the reason why this family admits a post-processing yielding superconvergence of the displacement (cf.~Remark~\ref{rem:FF-vs-PEERS} and Section~\ref{sec:superconvergence}). Estimates~\eqref{eq:rates}--\eqref{eq:rates-div} hold verbatim for the family {\rm (a)}, the only modification being that the spin tensor is then approximated to order $h^{k+1}$ instead of $h^{k+2}$, which does not affect the resulting rate, and for the PEERS element {\rm (c)}, in which case the stress is approximated to order $h^{k+1}$.
\end{remark}

\subsection{Displacement superconvergence for specific finite element pairs}
\label{sec:superconvergence}
In this subsection, we prove a superconvergence estimate for the projected displacement and its local post-processing for special combinations of finite elements.

We first define the displacement reconstructed from the discrete velocity by
\begin{equation}\label{eq:FF-RT-displacement-recovery}
\bu_h(t):=\bu_{0,h}+\int_0^t\bv_h(s)\ds,
\qquad \bu_{0,h}:=\bsP_h^{\bV}\bu_0 \, .
\end{equation}
Because $\bsP_h^{\bV}$ is independent of time and
$\dot{\bu}=\bv$, we have the exact identity
\begin{equation*}
    \bsP_h^{\bV}\bu(t)-\bu_h(t) =\int_0^t\big(\bsP_h^{\bV}\bv(s)-\bv_h(s)\big)\ds.
\end{equation*}
For the local reconstruction, introduce
\begin{equation}\label{eq:FF-RT-integrated-spin}
\brho(t):=\sk(\bnabla\bu_0)+\int_0^t\bomega(s)\ds \, ,
\qquad
\brho_h(t):=\bsP_h^{\bbK}\, \sk(\bnabla\bu_0)+\int_0^t\bomega_h(s)\ds \, ,
\end{equation}
where $\sk(\bnabla\bu_0) = \tfrac{1}{2} \, (\bnabla\bu_0-\bnabla\bu_0^\top)$.
Then, at the continuous level one has $\bnabla\bu=\cA_1\bsigma_1+\brho$. At the discrete level, we define, for every $K\in\cT_h$, the reconstruction $\bu_h^\star|_K\in\bP_{k+2}(K)$ as the solution of
\begin{equation}\label{eq:FF-RT-displacement-postprocessing}
\begin{array}{rcll}
(\bnabla\bu_h^\star,\bnabla\bw)_K 
&=&(\cA_1\bsigma_{1,h}+\brho_h,\bnabla\bw)_K
&\forall\,\bw\in\bP_{k+2}(K) \, ,\\[1ex]
(\bu_h^\star,\bw)_K &=& (\bu_h,\bw)_K
&\forall\,\bw\in\bP_0(K) \, .
\end{array}
\end{equation}

\begin{theorem}[Superconvergence of displacement]\label{thm:displacement-superconvergence}
Let the viscoelasticity spaces be the Farhloul--Fortin spaces with $k\ge 0$ and let the diffusion finite element spaces be $(\bZ_h^{\rm RT}, \Phi_h, \Theta_h)$ with $k+1$. We choose the initial data so that
\begin{equation*}
\bsigma_{1,h}^0=\bPi_h^e\bsigma_1(0) \, ,\qquad
\bv_h^0=\bsP_h^{\bV}\bv_0\, ,\qquad
\varphi_h^0=\sP_h^{\Phi}\varphi_0\, ,\qquad
\theta_h^0=\sP_h^{\Theta}\theta_0\, .
\end{equation*} 
Then,
\begin{equation}\label{eq:FF-RT-displacement-postprocessing-rate}
\|\bu - \bu_h^\star \|_{\LL^{\infty}(0,t;\bL^2(\Omega))} \leq C h^{k+2} \, , 
\end{equation}
with $C>0$ depending on solution regularities.
\end{theorem}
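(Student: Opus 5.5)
The plan is the standard Stenberg-type argument in three steps: a superconvergent estimate for the projected displacement error $\bsP_h^{\bV}\bu-\bu_h$, an $h^{k+2}$ estimate for $\cA_1\bsigma_1+\brho-(\cA_1\bsigma_{1,h}+\brho_h)$ in $\bbL^2$, and a local scaling argument for the post-processing~\eqref{eq:FF-RT-displacement-postprocessing}.

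\emph{Step 1: improved error for stress and spin.} With Farhloul--Fortin, $\bbS_h$ and $\bbK_h^{\rm c}$ approximate to order $k+2$. The RT choice of degree $k+1$ for diffusion makes $\Phi_h,\Theta_h$ piecewise $\PP_{k+1}$, so the concentrations are also approximated at order $k+2$. The only order-$(k+1)$ quantity left in $\Xi(t)$ is $\be^I_\bv$. However, $\be^I_\bv$ enters the error equations~\eqref{eq:discrete-error-eqs} only through $\sE_h^{\sf e}(0)$ (which vanishes by the choice of initial data) and via the triangle inequality at the end, not in $\sR_1,\sR_2$.

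Rerunning the energy argument of Theorem~\ref{thm:error-estimate} on the discrete errors alone therefore gives
\[
\|\be^h_{\bsigma_1}\|_{\LL^\infty(0,t;\bbL^2(\Omega))}+\|\be^h_{\bv}\|_{\LL^\infty(0,t;\bL^2(\Omega))}+\|\be^h_{\bsigma_2}\|_{\LL^2(0,t;\bbL^2(\Omega))}\leq Ch^{k+2}.
\]
Through the inf-sup argument of Theorem~\ref{thm:error-estimate-spin}, this also gives $\|\be_\bomega\|_{\LL^2(0,t;\bbL^2(\Omega))}\le Ch^{k+2}$.

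\emph{Step 2: the integrated first equation.} Integrate the first row of~\eqref{eq:discrete-error-eqs} in time from $0$ to $t$, using $\be^h_{\bsigma_1}(0)=\cero$. Writing $\bu^h_e:=\bsP_h^{\bV}\bu-\bu_h$ and $\brho^h_e:=\bsP_h^{\bbK}\brho-\brho_h$, we obtain
\[
\vb(\btau_h,[\bu^h_e,\brho^h_e])=-\va^1(\be_{\bsigma_1}(t),\btau_h)-(\btau_h,\brho-\bsP_h^{\bbK}\brho)_\Omega .
\]
The initial offsets are consistent because $\bu_{0,h}=\bsP_h^{\bV}\bu_0$ and the $\bbK$-projection of $\sk(\bnabla\bu_0)$ is used in $\brho_h$.

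Applying {\bf (H1)} then gives
\[
\|\bu^h_e(t)\|_{\bL^2(\Omega)}+\|\brho^h_e(t)\|_{\bbL^2(\Omega)}\le C\big(\|\be_{\bsigma_1}(t)\|_{\bbL^2(\Omega)}+\|(\mathrm I-\bsP_h^{\bbK})\brho(t)\|_{\bbL^2(\Omega)}\big)\le Ch^{k+2},
\]
using Step~1 together with the $\LL^\infty$ interpolation bound for $\be^I_{\bsigma_1}$. The step I expect to be the main obstacle is the bound for $\be_{\bsigma_1}$ in $\LL^\infty$ at order $k+2$. Here one must check carefully that every coupling term in $\sR_2$ (notably $\alpha(\varphi)-\alpha(\varphi_h)$ and $\tr\be_{\bsigma_1}$ in the calcium equation) is controlled by order-$(k+2)$ quantities, and that no $\be^I_\bv$ contribution sneaks in. This is exactly why RT of degree $k+1$ is imposed on the diffusion block.

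\emph{Step 3: post-processing.} On each $K$, let $\bPi^K$ denote the elliptic projection defined by~\eqref{eq:FF-RT-displacement-postprocessing} with data $\bnabla\bu=\cA_1\bsigma_1+\brho$ and mean of $\bu$. Then split
\[
\bu-\bu_h^\star=(\bu-\bPi^K\bu)+(\bPi^K\bu-\bu_h^\star).
\]
The first term is $O(h^{k+2})$ by standard approximation with $\bu\in\bH^{k+2}$. For the second, a Poincaré inequality for functions with prescribed element means gives
\[
\|\bPi^K\bu-\bu_h^\star\|_{\bL^2(K)}\le \|\bsP_0(\bu-\bu_h)\|_{\bL^2(K)}+Ch_K\big(\|\cA_1\be_{\bsigma_1}\|_{\bbL^2(K)}+\|\brho-\brho_h\|_{\bbL^2(K)}\big).
\]
The mean term is bounded by $\|\bu^h_e\|_{\bL^2(K)}$, since $\bP_0\subset\bV_h$. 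Summing over $K$ and using Steps 1--2 yields $Ch^{k+2}$; in fact the bound is better than needed because of the extra factor $h$. Taking the supremum in $t$ concludes~\eqref{eq:FF-RT-displacement-postprocessing-rate}.
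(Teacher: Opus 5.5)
Your proposal is correct and follows the paper's skeleton: an energy estimate for the discrete errors, in which every projection error entering $F$ and $G$ is now of order $h^{k+2}$, then the local Poincar\'e/scaling bound for~\eqref{eq:FF-RT-displacement-postprocessing}. The order-$h^{k+2}$ projection errors are exactly where the $\mathbf{RT}_{k+1}$/$\PP_{k+1}$ diffusion block is needed, and $\be^I_{\bv}$ never appears in $F$ or $G$.

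The one real difference is how you obtain the supercloseness of $\bsP_h^{\bV}\bu-\bu_h$.
\begin{itemize}
\item The paper reads it off from the velocity supercloseness $\|\be^h_{\bv}\|_{\LL^\infty(0,t;\bL^2(\Omega))}\le Ch^{k+2}$, via $\bsP_h^{\bV}\bu-\bu_h=\int_0^t\be^h_{\bv}\ds$ and Cauchy--Schwarz in time. It uses the integrated first error equation with an inf-sup argument only for $\brho-\brho_h$, and only in outline.
\item You apply {\bf (H1)} to the integrated first equation. This controls $\bsP_h^{\bV}\bu-\bu_h$ and $\bsP_h^{\bbK}\brho-\brho_h$ simultaneously, pointwise in $t$, through $\|\be_{\bsigma_1}(t)\|_{\bbL^2(\Omega)}$.
\end{itemize}
Both routes rest on the same energy estimate: you use its $\be^h_{\bsigma_1}$ component, the paper its $\be^h_{\bv}$ component. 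The paper's route is shorter for the displacement. Yours makes the $\brho$ bound explicit and renders the spin estimate at the end of your Step~1 unnecessary.

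One bookkeeping correction: the initial offsets do not cancel exactly. Integrating the first row of~\eqref{eq:discrete-error-eqs} with $\be^h_{\bsigma_1}(0)=\cero$ gives
\[
\vb(\btau_h,[\bu^h_e,\brho^h_e]) = -\va^1(\be_{\bsigma_1}(t),\btau_h) + \va^1(\be^I_{\bsigma_1}(0),\btau_h) - \Big(\btau_h,\int_0^t\be^I_{\bomega}\ds\Big)_\Omega \, .
\]
This differs from your identity by $\va^1(\be^I_{\bsigma_1}(0),\btau_h)+(\btau_h,(\mathrm{I}-\bsP_h^{\bbK})\sk(\bnabla\bu_0))_\Omega$. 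That residual need not vanish, because the triple $\bPi_h^e\bsigma_1(0)$, $\bsP_h^{\bV}\bu_0$, $\bsP_h^{\bbK}\sk(\bnabla\bu_0)$ does not satisfy the discrete analogue of $\bnabla\bu=\cA_1\bsigma_1+\brho$. It is, however, $O(h^{k+2})$, so your conclusion is unaffected.
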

\begin{proof}
For the FF--RT pair, the approximation operators in~\eqref{eq:error-splitting-def-1} satisfy, for a.e. $s\in(0,t_{\rm end})$,
\begin{equation}\label{eq:FF-RT-approximation-hierarchy}
\begin{aligned}
\norm{\dot{\be}^I_{\bsigma_1}}_{\bbL^2(\Omega)}
+\norm{\be^I_{\bsigma_1}}_{\bbL^2(\Omega)}
+\norm{\be^I_{\bsigma_2}}_{\bbL^2(\Omega)}
+\norm{\be^I_{\bomega}}_{\bbL^2(\Omega)}
+\norm{\be^I_{\bz}}_{\bL^2(\Omega)}
+\norm{e^I_{\varphi}}_{\LL^2(\Omega)}
+\norm{e^I_{\theta}}_{\LL^2(\Omega)}
\leq C h^{k+2}.
\end{aligned}
\end{equation}
The same estimate is assumed for the time derivatives. 

Note that $\be^h_{\bsigma_1}(0)=\cero$, $\be^h_{\bv}(0)=\cero$,
$e^h_{\varphi}(0)=0$, and $e^h_{\theta}(0)=0$. By inserting the error equations
~\eqref{eq:discrete-error-eqs} into the energy identity
~\eqref{eq:error-estimate-proof-1}, the terms independent of the nonlinear
residual are bounded as in~\eqref{eq:error-estimate-proof-2}. The Lipschitz bounds~\eqref{eq:assumption-f}--\eqref{eq:assumption-g} and~\eqref{eq:alpha-Lipschitz} give
\begin{align*}
\norm{\alpha(\varphi)-\alpha(\varphi_h)}_{\LL^2(\Omega)}
&\leq L_\alpha\, \big(\|e^I_{\varphi}\|_{\LL^2(\Omega)}
+\| e^h_{\varphi}\|_{\LL^2(\Omega)}\big) \, ,\\
\norm{f(\varphi,\theta)-f(\varphi_h,\theta_h)}_{\LL^2(\Omega)}
&\leq L_{f1} \, \big(\| e^I_{\varphi}\|_{\LL^2(\Omega)}
+\|e^h_{\varphi}\|_{\LL^2(\Omega)}\big) +L_{f2} \, \big(\|e^I_{\theta}\|_{\LL^2(\Omega)}
+\|e^h_{\theta}\|_{\LL^2(\Omega)}\big) \, ,
\end{align*}
and an analogous bound holds for $g$. Using these inequalities in~\eqref{eq:error-estimate-proof-3}, and then using Young's inequality in~\eqref{eq:error-estimate-proof-4}, gives constants $C_0,C_1>0$ independent
of $h$ such that
\begin{equation}\label{eq:FF-RT-special-energy-inequality}
\partial_t\sE_h^{\sf e}+\sD_h^{\sf e}
\leq C_0 \, \sE_h^{\sf e} + C_1\, \big(F(t)^2+G(t)^2\big)\, .
\end{equation}
By~\eqref{eq:FF-RT-approximation-hierarchy} and the definition of $F$ and
$G$ in~\eqref{eq:error-estimate-proof-FG},
\begin{equation*}
\norm{F}_{\LL^2(0,t)}+\norm{G}_{\LL^2(0,t)}\leq C h^{k+2} \, .
\end{equation*}
Consequently, integrating~\eqref{eq:FF-RT-special-energy-inequality} and applying
Gr\"onwall's inequality yields
\begin{equation*}
\norm{\sE_h^{\sf e}}_{\LL^\infty(0,t)}
+\norm{\sD_h^{\sf e}}_{\LL^1(0,t)}
\leq C h^{2k+4} \, .
\end{equation*}
In particular, by~\eqref{eq:error-estimate-proof-control},
\begin{equation}\label{eq:FF-RT-velocity-discrete-error}
\|\be^h_{\bv}\|_{\LL^\infty(0,t;\bL^2(\Omega))}
\leq C h^{k+2} \, .
\end{equation}
Since $\be^h_{\bv}=\bsP_h^{\bV}\bv-\bv_h$, this is the required
velocity supercloseness estimate. Notice that the usual approximation error
$\bv-\bsP_h^{\bV}\bv$ is not part of~\eqref{eq:FF-RT-velocity-discrete-error}.

It follows from the Cauchy--Schwarz inequality in time and~\eqref{eq:FF-RT-velocity-discrete-error} that
\begin{equation}\label{eq:FF-RT-displacement-projected-error}
\norm{\bsP_h^{\bV}\bu-\bu_h}_{\LL^\infty(0,t;\bL^2(\Omega))}
\leq t^{1/2}\, \|\be^h_{\bv}\|_{\LL^2(0,t;\bL^2(\Omega))}
\leq C h^{k+2} \, .
\end{equation}
Thus the projected displacement converges one order faster than the assumed
best approximation order $h^{k+1}$ of $\bu$ in $\bV_h$.
A scaling argument on the reference element followed by the triangle inequality (see \cite{Lee-thesis} for details), gives
\begin{equation}\label{eq:FF-RT-local-postprocessing-estimate}
\begin{aligned}
\norm{\bu(t)-\bu_h^\star(t)}_{\bL^2(K)}
\leq C\Big(&\norm{\bsP_h^{\bV}\bu(t)-\bu_h(t)}_{\bL^2(K)}
+h_K\norm{\bsigma_1(t)-\bsigma_{1,h}(t)}_{\bbL^2(K)}\\
&+h_K\norm{\brho(t)-\brho_h(t)}_{\bbL^2(K)}
+h_K^{k+2}|\bu(t)|_{\bH^{k+2}(K)}\Big) \, .
\end{aligned}
\end{equation}
The first term is $O(h^{k+2})$ by~\eqref{eq:FF-RT-displacement-projected-error}.
The stress term is no larger than $O(h^{k+2})$ by~\eqref{eq:FF-RT-approximation-hierarchy} and~\eqref{eq:error-estimate-proof-7}. $\norm{\brho(t)-\brho_h(t)}_{\bbL^2}$ is estimated by $\norm{\brho(0) - \brho_h(0)}_{\bbL^2} \le Ch^{k+2}\norm{\nabla \bu_0}_{\bH^{k+2}}$ and an estimate of the time-integration of $\be^I_{\bomega}$. Recalling the first equation in \eqref{eq:discrete-error-eqs} and adopting an argument similar to the proof of Theorem~\ref{thm:error-estimate-spin}, one can show that $\norm{\brho(t)-\brho_h(t)}_{\bbL^2} \le Ch^{k+2}$. Therefore, \eqref{eq:FF-RT-displacement-postprocessing-rate} follows. 
\end{proof}
\begin{remark}
    The polynomial space in \eqref{eq:FF-RT-displacement-postprocessing} and the estimates of $\bsigma_1$ and $\brho$ in \eqref{eq:FF-RT-local-postprocessing-estimate} allow $Ch^{k+3}$ approximation but the final estimate in \eqref{eq:FF-RT-local-postprocessing-estimate} is limited by $Ch^{k+2}$ due to \eqref{eq:FF-RT-displacement-projected-error}.
\end{remark}

\section{Numerical examples}\label{sec:results}
This section reports on the verification of the theoretical convergence rates and assessment of performance of the formulation. All simulations are implemented using the \texttt{FEniCS} finite element library \cite{Alnaes2015}. 
\subsection{Corroboration of convergence against smooth solutions}\label{sec:corroboration}
We assess the spatial convergence of the fully coupled viscoelasticity-reaction-diffusion problem using manufactured solutions on the unit square $\Omega=(0,1)^2$. The final time is $t_{\mathrm{end}}=0.5$ (adimensional), and backward Euler's time stepping is used to approximate $\dot{\bsigma}_1,\dot{\bv},\dot{\varphi},\dot{\theta}$, with the constant time step $\Delta t = 10^{-3}$. Since the same time step is used for all mesh levels, and it is sufficiently small, the observed rates effectively measure the spatial discretisation error. The sub-boundaries are specified as follows: $\Gamma^{\bu}$ consists of the left and bottom sides of the unit squares, and $\Gamma^{\bsigma}$ is the top and right edges.

\begin{table}[t!]
\setlength{\tabcolsep}{2.pt}
\centering
{\small\textbf{Component-wise errors}\\[1mm]
\begin{tabular}{rc|
  cc cc cc cc cc cc cc}
\toprule
DoFs & $h$
& \multicolumn{2}{c}{$\bsigma_1$}
& \multicolumn{2}{c}{$\bsigma_2$}
& \multicolumn{2}{c}{$\bv$}
& \multicolumn{2}{c}{$\bomega$}
& \multicolumn{2}{c}{$\bz$}
& \multicolumn{2}{c}{$\varphi$}
& \multicolumn{2}{c}{$\theta$} \\
\cmidrule(lr){3-16}
& & error & rate & error & rate & error & rate
& error & rate & error & rate & error & rate & error & rate \\
\midrule
\multicolumn{16}{c}{AFW family with $k=0$} \\
\midrule
  200 & 0.707 & 5.9e+02 & -- & 9.9e+03 & -- & 1.8e-01 & -- & 6.4e-01 & -- & 2.4e+00 & -- & 1.3e-01 & -- & 6.1e-02 & -- \\
   720 & 0.354 & 3.2e+02 & 0.89 & 5.1e+03 & 0.94 & 6.3e-02 & 1.48 & 1.8e-01 & 1.82 & 1.3e+00 & 0.92 & 6.7e-02 & 0.92 & 3.2e-02 & 0.92\\
  2720 & 0.177 & 1.6e+02 & 0.96 & 2.6e+03 & 0.99 & 2.8e-02 & 1.16 & 6.7e-02 & 1.42 & 6.4e-01 & 0.98 & 3.3e-02 & 1.02 & 1.6e-02 & 0.98\\
 10560 & 0.088 & 8.3e+01 & 0.99 & 1.3e+03 & 1.00 & 1.3e-02 & 1.07 & 3.0e-02 & 1.15 & 3.2e-01 & 1.00 & 1.6e-02 & 1.01 & 8.2e-03 & 0.99\\
 41600 & 0.044 & 4.1e+01 & 0.99 & 6.5e+02 & 1.00 & 6.6e-03 & 1.03 & 1.5e-02 & 1.05 & 1.6e-01 & 1.00 & 8.2e-03 & 1.00 & 4.2e-03 & 0.97\\
165120 & 0.022 & 2.1e+01 & 1.00 & 3.2e+02 & 1.00 & 3.3e-03 & 1.01 & 7.3e-03 & 1.01 & 8.1e-02 & 1.00 & 4.1e-03 & 1.00 & 2.1e-03 & 0.99\\
\midrule 
\multicolumn{16}{c}{AFW family with $k=1$} \\
\midrule 
  480 & 0.707 & 2.0e+02 & -- & 3.0e+03 & -- & 6.1e-02 & -- & 2.2e-01 & -- & 7.3e-01 & -- & 4.3e-02 & -- & 1.9e-02 & -- \\
  1800 & 0.354 & 5.1e+01 & 2.01 & 7.8e+02 & 1.93 & 1.2e-02 & 2.35 & 4.0e-02 & 2.42 & 1.9e-01 & 1.92 & 1.0e-02 & 2.08 & 4.9e-03 & 1.92\\
  6960 & 0.177 & 1.2e+01 & 2.02 & 2.0e+02 & 1.98 & 2.3e-03 & 2.36 & 6.7e-03 & 2.59 & 4.9e-02 & 1.98 & 2.5e-03 & 2.03 & 1.3e-03 & 1.96\\
 27360 & 0.088 & 3.1e+00 & 2.01 & 4.9e+01 & 2.00 & 5.3e-04 & 2.14 & 1.3e-03 & 2.33 & 1.2e-02 & 2.00 & 6.2e-04 & 2.01 & 3.6e-04 & 1.97\\
108480 & 0.044 & 9.3e-01 & 1.74 & 1.2e+01 & 2.00 & 1.3e-04 & 2.04 & 3.1e-04 & 2.12 & 3.1e-03 & 2.00 & 1.6e-04 & 2.00 & 9.1e-05 & 1.98\\
432000 & 0.022 & 2.4e-01 & 1.92 & 3.1e+00 & 2.00 & 3.2e-05 & 2.01 & 7.5e-05 & 2.04 & 7.7e-04 & 1.99 & 4.0e-05 & 1.96 & 2.3e-05 & 1.97\\
\midrule 
\multicolumn{16}{c}{FF family with $k=0$} \\
\midrule 
 201 & 0.707 & 5.9e+02 & -- & 9.9e+03 & -- & 2.1e-01 & -- & 8.8e-01 & -- & 2.4e+00 & -- & 1.3e-01 & -- & 6.1e-02 & -- \\
   713 & 0.354 & 3.2e+02 & 0.89 & 5.2e+03 & 0.94 & 6.5e-02 & 1.71 & 1.7e-01 & 2.34 & 1.3e+00 & 0.92 & 6.7e-02 & 0.92 & 3.2e-02 & 0.92\\
  2673 & 0.177 & 1.6e+02 & 0.96 & 2.6e+03 & 0.99 & 2.8e-02 & 1.19 & 3.6e-02 & 2.27 & 6.4e-01 & 0.98 & 3.3e-02 & 1.02 & 1.6e-02 & 0.98\\
 10337 & 0.088 & 8.3e+01 & 0.99 & 1.3e+03 & 1.00 & 1.3e-02 & 1.08 & 9.0e-03 & 2.01 & 3.2e-01 & 1.00 & 1.6e-02 & 1.01 & 8.2e-03 & 0.99\\
 40641 & 0.044 & 4.1e+01 & 0.99 & 6.5e+02 & 1.00 & 6.6e-03 & 1.03 & 2.3e-03 & 1.98 & 1.6e-01 & 1.00 & 8.2e-03 & 1.00 & 4.1e-03 & 1.00\\
161153 & 0.022 & 2.1e+01 & 1.00 & 3.3e+02 & 1.00 & 3.3e-03 & 1.01 & 5.7e-04 & 1.99 & 8.1e-02 & 1.00 & 4.1e-03 & 1.00 & 2.1e-03 & 0.99\\
\midrule 
\multicolumn{16}{c}{PEERS family with $k=0$} \\
\midrule 
   153 & 0.707 & 5.4e+02 & -- & 1.0e+04 & -- & 7.0e-01 & -- & 6.2e+00 & -- & 2.5e+00 & -- & 1.2e-01 & -- & 6.1e-02 & --\\
   561 & 0.354 & 2.9e+02 & 0.91 & 5.4e+03 & 0.92 & 1.9e-01 & 1.87 & 2.0e+00 & 1.61 & 1.3e+00 & 0.92 & 6.4e-02 & 0.93 & 3.2e-02 & 0.92\\
  2145 & 0.177 & 1.8e+02 & 0.69 & 2.8e+03 & 0.98 & 5.1e-02 & 1.91 & 5.3e-01 & 1.93 & 6.6e-01 & 0.98 & 3.3e-02 & 0.98 & 1.6e-02 & 0.98\\
  8385 & 0.088 & 1.0e+02 & 0.83 & 1.4e+03 & 1.00 & 1.7e-02 & 1.56 & 1.7e-01 & 1.69 & 3.3e-01 & 1.00 & 1.6e-02 & 1.00 & 8.2e-03 & 0.99\\
 33153 & 0.044 & 5.1e+01 & 0.94 & 6.9e+02 & 1.00 & 7.2e-03 & 1.28 & 5.8e-02 & 1.51 & 1.6e-01 & 1.00 & 8.2e-03 & 1.00 & 4.1e-03 & 1.00\\
131841 & 0.022 & 2.6e+01 & 0.97 & 3.5e+02 & 1.00 & 3.3e-03 & 1.10 & 2.1e-02 & 1.50 & 8.2e-02 & 1.00 & 4.1e-03 & 1.00 & 2.1e-03 & 0.99\\
\bottomrule
\end{tabular}}
\caption{Convergence history (individual errors in the $L^\infty(0,t_{\rm end}; V)$ norm and experimental order of convergence) for the proposed method using different finite element families (for families (b) and (c) we only show the lowest-order case) and a constant time step $\Delta t = 10^{-3}$.}
\label{tab:convergence}
\end{table}

We prescribe the exact primal unknowns (displacement and reaction-diffusion variables) as
\begin{gather*}
\bu(x,y,t)=\frac{e^{-t}}{5}
\begin{pmatrix}
\cos(\pi(x+y)) \\ \sin(\pi(x-y))
\end{pmatrix},\quad 
\varphi(x,y,t)=t\cos(\pi x)\sin(\pi y),
\quad  
\theta(x,y,t)=t^2\sin(\pi x)\cos(\pi y),
\end{gather*}
and we note that they have different time scalings. 
From these smooth fields we construct the exact   velocity  $\bv=\dot{\bu}$, and exact elastic and viscous stresses, spin and diffusive flux using 
\eqref{eq:reconstructed-mixed-variables}. 
The nonlinear coupling coefficient and reaction terms are
\[
\alpha(\varphi)=\frac{\alpha_0\varphi^2}{1+\varphi^2}, \quad 
f(\varphi,\theta)
=\gamma_0\theta\frac{\gamma_1+\varphi}{1+\varphi}
-\gamma_2\frac{\varphi}{\gamma_3+\varphi},
\quad
g(\varphi,\theta)=\frac{1}{1+\varphi^2}-\theta.
\]
The source terms and boundary data are obtained by substituting these exact fields into the strong equations.  
The viscoelasticity equations are discretised using the three finite element families described in Section \ref{sec:FE-examples}. 
For this first set of tests, the adimensional material and model parameters are
\begin{gather*}
\rho=1,\quad \alpha_0=1,\quad \beta=1,\quad D=1,\quad 
\gamma_0=1,\quad \gamma_1=0.5,\quad
\gamma_2=0.1,\quad \gamma_3=1,
\end{gather*}
with elastic and viscoelastic parameters
\[
E_1=100,\quad \nu_1=0.45,\quad 
E_2=1000,\quad \nu_2=0.475.
\]
The Lam\'e coefficients are computed from
\[
\lambda_i=\frac{E_i\nu_i}{(1+\nu_i)(1-2\nu_i)},
\qquad
\mu_i=\frac{E_i}{2(1+\nu_i)},
\qquad i=1,2.
\]

\begin{table}[t!]
\centering 
{\small \textbf{Global diagnostics}\\[1mm]
\begin{minipage}{0.48\textwidth}
\begin{tabular}{ccccc}
\toprule
$e_{\rm tot}$ & rate & mom. loss & mass loss & avg. it. \\
\midrule
\multicolumn{5}{c}{AFW family with $k=0$} \\
\midrule
1.05e+04 & -- & 7.45e-12 & 5.25e-09 & 3.00 \\
 5.47e+03 & 0.94 & 2.40e-12 & 1.74e-08 & 2.67\\
 2.76e+03 & 0.99 & 8.24e-13 & 5.71e-09 & 2.00\\
 1.38e+03 & 1.00 & 4.22e-13 & 1.52e-09 & 2.00\\
 6.92e+02 & 1.00 & 2.58e-13 & 3.87e-10 & 2.00\\
 3.46e+02 & 1.00 & 1.65e-13 & 9.71e-11 & 2.00\\
\midrule
\multicolumn{5}{c}{AFW family with $k=1$} \\
\midrule
 3.17e+03 & -- & 1.47e-11 & 1.98e-08 & 2.00 \\
 8.30e+02 & 1.93 & 7.46e-12 & 7.00e-09 & 2.00\\
 2.10e+02 & 1.98 & 4.44e-12 & 1.98e-09 & 2.00\\
 5.26e+01 & 2.00 & 2.29e-12 & 5.13e-10 & 2.00\\
 1.33e+01 & 1.98 & 1.14e-12 & 1.29e-10 & 2.00\\
 3.74e+00 & 1.92 & 5.96e-13 & 3.24e-11 & 2.00\\
\bottomrule
\end{tabular}\end{minipage}\hfill\begin{minipage}{0.48\textwidth}\begin{tabular}{ccccc}
\toprule
$e_{\rm tot}$ & rate & mom. loss & mass loss & avg. it. \\
\midrule
\multicolumn{5}{c}{FF family with $k=0$} \\
\midrule
 1.05e+04 & -- & 7.56e-12 & 5.25e-09 & 2.99 \\
 5.47e+03 & 0.94 & 2.42e-12 & 1.74e-08 & 2.67\\
 2.76e+03 & 0.99 & 7.39e-13 & 5.71e-09 & 2.00\\
 1.38e+03 & 1.00 & 3.79e-13 & 1.52e-09 & 2.00\\
 6.92e+02 & 1.00 & 2.72e-13 & 3.87e-10 & 2.00\\
 3.46e+02 & 1.00 & 1.56e-13 & 9.71e-11 & 2.00\\
\midrule
\multicolumn{5}{c}{PEERS family with $k=0$} \\
\midrule
1.09e+04 & -- & 3.15e-12 & 6.91e-09 & 2.99 \\
 5.73e+03 & 0.92 & 8.99e-13 & 2.76e-09 & 3.00\\
 2.93e+03 & 0.97 & 3.94e-13 & 5.46e-09 & 2.66\\
 1.50e+03 & 0.96 & 1.81e-13 & 1.53e-09 & 2.00\\
 7.71e+02 & 0.96 & 1.06e-13 & 3.87e-10 & 2.00\\
 3.79e+02 & 0.98 & 5.46e-14 & 9.71e-11 & 2.00\\
\bottomrule
\end{tabular}\end{minipage}}
\caption{Total error, maximum momentum and mass balance error in the $\ell^\infty$ norm, and average (over the 500 time steps) number of Newton--Raphson iterations required for convergence. Results associated with the proposed method using different finite element families and a constant time step $\Delta t = 10^{-3}$. The rows  correspond to the same meshes used for Table \ref{tab:convergence}.}
\label{tab:convergence2}
\end{table}

The computations are performed on a sequence of uniformly refined meshes. For each mesh, the nonlinear problem is solved at every time step using Newton--Raphson's method with a relative and absolute tolerance of $10^{-8}$. The reported error for each variable is the maximum-in-time error,
\[
e_q=\max_{0\leq t\leq t_{\mathrm{end}}}\|q(t)-q_h(t)\|_q,
\]
where the norm is the corresponding type of norm. The total error is defined as the sum of the component errors,
\[
e_{\mathrm{tot}}
=e_{\bsigma_1}+e_{\bsigma_2}+e_{\bv}+e_{\bomega}+e_{\bz}+e_\varphi+e_\theta.
\]

For two consecutive meshes with mesh sizes $h_{\mathrm{coarse}}$ and $h_{\mathrm{fine}}$, the experimental order of convergence is computed as
\[
\operatorname{rate}
=
\frac{\log(e_{\mathrm{fine}}/e_{\mathrm{coarse}})}
{\log(h_{\mathrm{fine}}/h_{\mathrm{coarse}})}.
\]
Since $h_{\mathrm{fine}}<h_{\mathrm{coarse}}$, this quantity is positive when the error decreases under refinement. The first mesh has no rate value because no coarser reference mesh is available.

\begin{figure}[t!]
    \centering
    \includegraphics[width=0.245\linewidth]{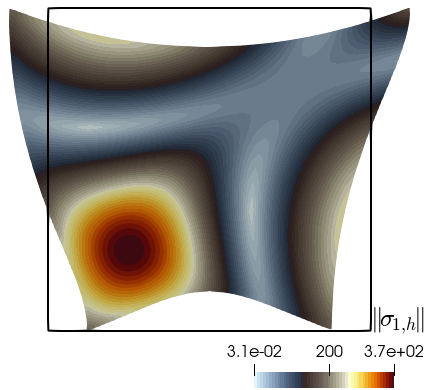}
       \includegraphics[width=0.245\linewidth]{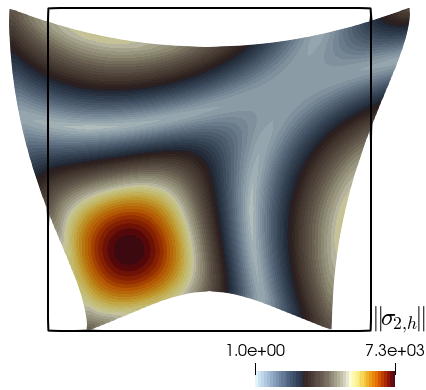}
       \includegraphics[width=0.245\linewidth]{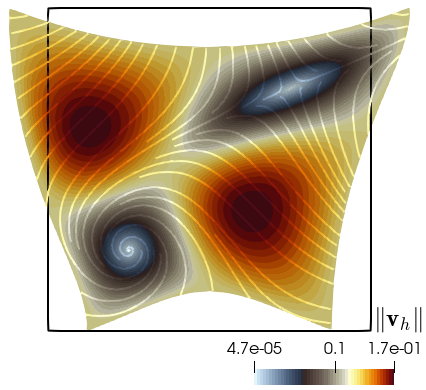}
       \includegraphics[width=0.245\linewidth]{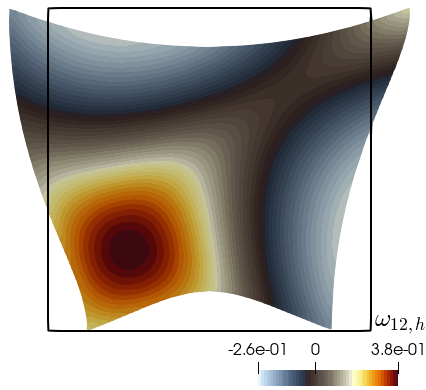}
    \includegraphics[width=0.245\linewidth]{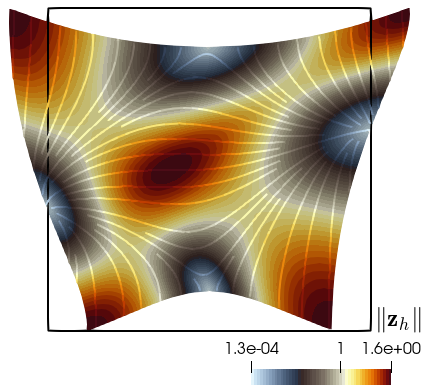}
       \includegraphics[width=0.245\linewidth]{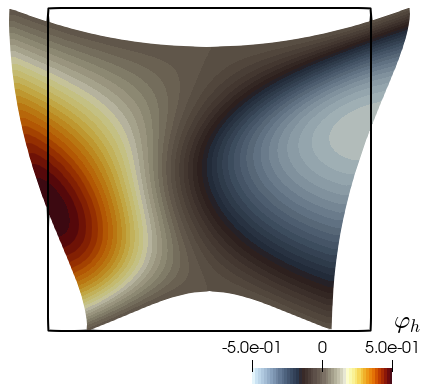}
       \includegraphics[width=0.245\linewidth]{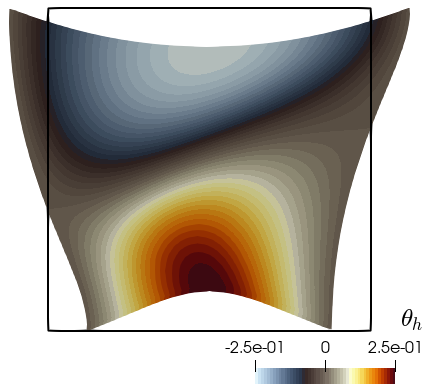}
    \caption{Convergence test against smooth solutions. Sample of approximate fields (from top-left to bottom-right: magnitudes of elastic and modified viscous stresses, magnitude and evenly spaced streamlines of velocity, non-trivial entry of the spin tensor, magnitude and streamlines of diffusive flux, and distribution of calcium and iIT, respectively) plotted on the deformed (according to the displacement $\bu_h$) configuration at the final time.}
    \label{fig:convergence}
\end{figure}

The numerical results are collected in Tables \ref{tab:convergence} and \ref{tab:convergence2}. They show the expected optimal convergence rates for the Arnold--Falk--Winther, Farhloul--Fortin and PEERS  viscoelastic  discretisations (we report lowest and second-lowest polynomial degrees for the first family and only lowest-order for the other two cases).  The component-wise errors and the total error decrease with the rates predicted by the polynomial approximation orders of the corresponding finite element spaces. We observe that the momentum and mass defects ($\ell^\infty$ norm of the assembled residuals of the momentum and mass balances) are small. Note that in the first panel of Table \ref{tab:convergence2} these quantities are not machine precision because, for example, $f(\varphi_h,\theta_h)$ and $\tr(\bsigma_{1,h})$ are not necessarily in $\mathbb{P}_k^{\mathrm{DG}}$. Samples of the approximate solutions obtained with the lowest-order AFW  family are portrayed in Figure \ref{fig:convergence}, showing well-resolved profiles. These plots are rendered on the deformed configuration (using the discrete displacement reconstructed from the discrete velocity as in \eqref{eq:reconstructed-displacement}).

\begin{table}[t]
\centering
\begin{tabular}{rc cc cc cc}
\toprule
DoFs & $h$ & $e(\bv_h)$ & rate & $e(\bu_h)$ & rate & $e(\bu_h^\star)$ & rate \\
\midrule
   201 & 0.707 & 2.12e-01 & --   & 1.01e-01 & --   & 8.98e-02 & --   \\
   713 & 0.354 & 6.45e-02 & 1.71 & 3.71e-02 & 1.44 & 2.15e-02 & 2.06 \\
  2673 & 0.177 & 2.83e-02 & 1.19 & 1.68e-02 & 1.15 & 5.82e-03 & 1.89 \\
 10337 & 0.088 & 1.34e-02 & 1.08 & 8.07e-03 & 1.05 & 1.56e-03 & 1.90 \\
 40641 & 0.044 & 6.58e-03 & 1.03 & 3.99e-03 & 1.02 & 4.02e-04 & 1.98 \\
161153 & 0.022 & 3.27e-03 & 1.01 & 1.99e-03 & 1.00 & 1.01e-04 & 1.99 \\
\bottomrule
\end{tabular}
\caption{Displacement post-processing for the Farhloul--Fortin family
($k=0$): errors and experimental rates of convergence of the velocity
$v_h$, the reconstructed displacement $\bu_h$ (pre-processing), and the
post-processed displacement $\bu_h^\star$. The optimal rate is $k+1=1$; the
superconvergent rate predicted by \eqref{eq:FF-RT-displacement-postprocessing-rate} is $k+2=2$.}
\label{tab:postprocess-FF}
\end{table}

\subsection{Verification of the displacement superconvergence}
\label{subsec:postprocessing}

We now verify numerically the local displacement post-processing of
Section~\ref{sec:superconvergence} and the superconvergence estimate
\eqref{eq:FF-RT-displacement-postprocessing-rate}. The setup is identical to Section \ref{sec:corroboration} (same manufactured
solution, uniform triangulations of the unit square, backward Euler in time with
$\Delta t = 10^{-3}$, and $t_{\mathrm{end}} = 0.5$), with the lowest-order case
$k=0$. At the final time step we reconstruct the displacement and spin by \eqref{eq:FF-RT-displacement-recovery} and \eqref{eq:FF-RT-integrated-spin}, respectively, 
accumulated at every time step by the backward-Euler update
$\bu_h^{n+1} = \bu_h^n + \Delta t\, \bv_h^{n+1}$ and
$\brho_h^{n+1} = \brho_h^n + \Delta t\, \bomega_h^{n+1}$, and then solve  the local problem \eqref{eq:FF-RT-displacement-postprocessing} for $\bu_h^{\star}\in \bP_{k+2}(K)$. This reconstruction space   and the constraint space $\bP_{k+1}(K)$ (in FF) are
  discontinuous, this represents a single global linear solve with a
block-diagonal (element-local) matrix, at negligible extra cost. Table~\ref{tab:postprocess-FF}
reports, for the Farhloul--Fortin pair, the $\bL^2$-errors and experimental rates
of the discrete velocity $\bv_h$, of the reconstructed (pre-processing)
displacement $\bu_h$, and of the post-processed displacement $\bu_h^{\star}$. As
predicted by Theorem~\ref{thm:displacement-superconvergence}, both $\bv_h$ and $\bu_h$ converge at the optimal
rate $k+1=1$, while $\bu_h^{\star}$ converges asymptotically at the  
rate $k+2=2$ predicted by \eqref{eq:FF-RT-displacement-postprocessing-rate}.

\begin{table}[t]
\centering
\begin{tabular}{rc cc cc cc}
\toprule
DoFs & $h$ & $e(\bv_h)$ & rate & $e(\bu_h)$ & rate & $e(\bu_h^\star)$ & rate \\
\midrule
\multicolumn{8}{c}{AFW family with $k=0$} \\
\midrule
   200 & 0.707 & 1.76e-01 & --   & 9.06e-02 & --   & 7.77e-02 & --   \\
   720 & 0.354 & 6.31e-02 & 1.48 & 3.67e-02 & 1.30 & 2.09e-02 & 1.90 \\
  2720 & 0.177 & 2.82e-02 & 1.16 & 1.68e-02 & 1.13 & 5.85e-03 & 1.84 \\
 10560 & 0.088 & 1.34e-02 & 1.08 & 8.08e-03 & 1.05 & 1.57e-03 & 1.89 \\
 41600 & 0.044 & 6.58e-03 & 1.03 & 3.99e-03 & 1.02 & 4.07e-04 & 1.96 \\
165120 & 0.022 & 3.27e-03 & 1.01 & 1.99e-03 & 1.00 & 1.11e-04 & 1.97 \\
\midrule
\multicolumn{8}{c}{PEERS family with $k=0$} \\
\midrule
   153 & 0.707 & 7.04e-01 & --   & 2.66e-01 & --   & 3.05e-01 & --   \\
   561 & 0.354 & 1.93e-01 & 1.87 & 7.52e-02 & 1.82 & 7.99e-02 & 1.93 \\
  2145 & 0.177 & 5.15e-02 & 1.91 & 2.26e-02 & 1.73 & 1.86e-02 & 2.11 \\
  8385 & 0.088 & 1.74e-02 & 1.56 & 8.97e-03 & 1.33 & 4.80e-03 & 1.95 \\
 33153 & 0.044 & 7.16e-03 & 1.28 & 4.11e-03 & 1.13 & 1.21e-03 & 1.98 \\
131841 & 0.022 & 3.35e-03 & 1.10 & 2.00e-03 & 1.04 & 2.95e-04 & 2.04 \\
\bottomrule
\end{tabular}
\caption{Displacement post-processing for the AFW and PEERS families
($k=0$), shown for comparison with Table~\ref{tab:postprocess-FF}. The
superconvergent rate $k+2=2$ is not covered by Theorem~\ref{thm:displacement-superconvergence} for
these pairs, but is still observed numerically.}
\label{tab:postprocess-AFW-PEERS}
\end{table}

For comparison, Table~\ref{tab:postprocess-AFW-PEERS} applies the   post-processing to the AFW and PEERS pairs (but using $\bP_{k+1}(K)$ rather than $\bP_{k+2}(K)$), for which Theorem~\ref{thm:displacement-superconvergence} is
not established (see also Remark~\ref{rem:rates-other-families}, pointing that neither pair inherits the extra order of accuracy that motivates choosing  $\bP_{k+2}(K)$ for FF). In this particular case of smooth solutions, postprocessing into one degree above the usual velocity space, produces for both families 
a rate close to $k+2$.

\subsection{Simple viscoelasticity benchmarks}\label{sec:viscoelasticity-benchmarks}

In order to validate the viscoelasticity formulation and the associated solvers in a setting where the qualitative mechanical response is well documented, we discard the reaction-diffusion subsystem~\eqref{eq:calcium-dynamics-1}--\eqref{eq:calcium-dynamics-2} and set $\alpha\equiv 0$ in~\eqref{eq:strong-viscoelast}, so that the coupling term disappears from the constitutive law of $\bsigma_2$. What remains is the uncoupled viscoelasticity block
that is, a Kelvin--Voigt solid with an elastic spring $\bsigma_1=\cC_1\beps(\bu)$ and a viscous dashpot $\bsigma_2=\cC_2\beps(\bv)$ acting in parallel on the same total strain, with $\bsigma_1$ recovered in rate form (cf.~\eqref{eq:Ais}). We follow \cite{meddahi23,rognes10} (see also \cite{Lee-thesis}). We use the lowest-order ($k=0$) Farhloul--Fortin pair of Section~\ref{sec:FE-examples}, now in three dimensions, on a structured tetrahedral partition of the slab $\Omega=(0,1)\times(0,0.5)\times(0,0.5)$ ($1500$ elements, $64\,188$ degrees of freedom), and backward Euler in time. The body is clamped, $\bv=\cero$, on $\Gamma^{\bu}=\{x=0\}$, and a time-dependent traction $(\bsigma_1+\bsigma_2)\bn = \bg(t)$ is imposed on $\Gamma^{\bsigma}=\partial\Omega\setminus\Gamma^{\bu}$, with $\bg\equiv\cero$ on $\{y=0\}\cup\{y=0.5\}\cup\{z=0\}\cup\{z=0.5\}$ (stress-free) and a prescribed traction on $\{x=1\}$, specified below for each test; since the Farhloul--Fortin pair imposes traction through Dirichlet data on the stress spaces, $\bg$ is assigned to $\bsigma_1$'s boundary data there (with $\bsigma_2$'s left at zero), which reproduces $(\bsigma_1+\bsigma_2)\bn=\bg(t)$. The material parameters are $\mu_1=200$, $\lambda_1=1000$ (elastic branch) and $\mu_2=50$, $\lambda_2=200$ (viscous branch), with a small density $\varrho=10^{-3}$ so that the discrete elastodynamic scheme approximates a quasi-static response. These values do not coincide with those in \cite[Table 4]{rognes10} (quasi-static, no inertia, and Kelvin--Voigt) and \cite[Section 6]{meddahi23} (four-parameter Zener/standard-linear-solid model with an additional relaxation-time variable), since system~\eqref{eq:strong-viscoelast} with $\alpha\equiv0$ has an inertial term and two material branches. 

\begin{figure}[t!]
\centering
\includegraphics[width=0.95\linewidth]{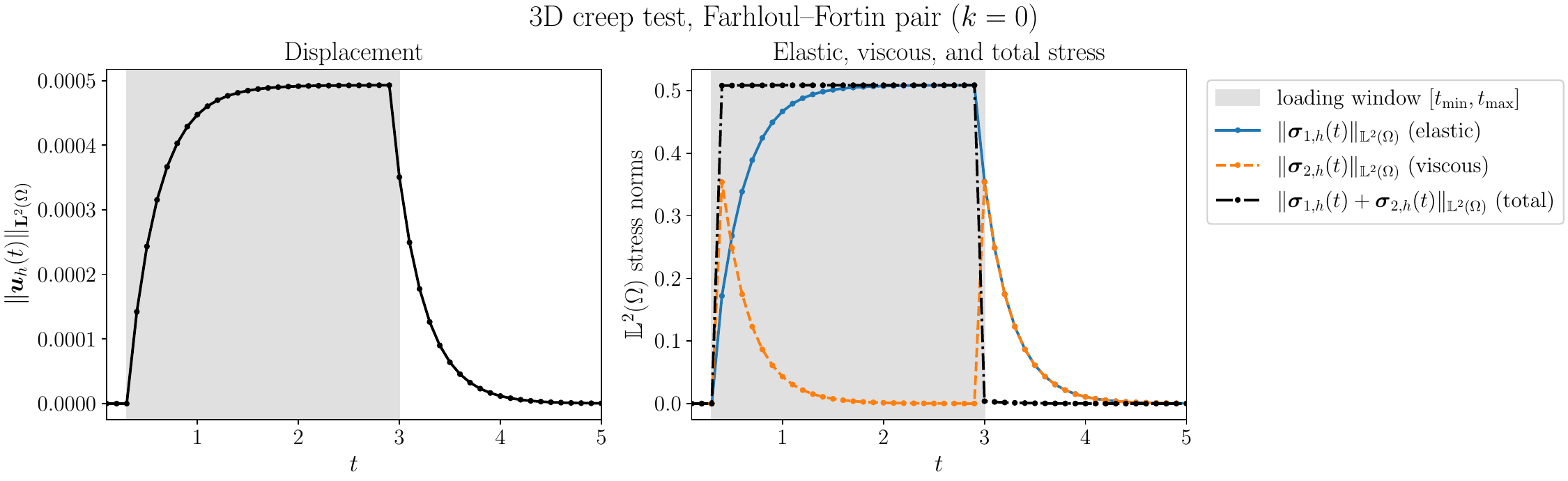}
\caption{Creep test for the uncoupled viscoelasticity block, Farhloul--Fortin pair with $k=0$, in the three-dimensional slab $\Omega=(0,1)\times(0,0.5)\times(0,0.5)$. Left: $\bL^2(\Omega)$-norm of the reconstructed displacement $\bu_h(t)$. Right: $\bbL^2(\Omega)$-norms of the elastic ($\bsigma_{1,h}$), viscous ($\bsigma_{2,h}$), and total ($\bsigma_{1,h}+\bsigma_{2,h}$) stresses. The shaded band marks the loading window $[t_{\min},t_{\max}]$.}
\label{fig:creepFF}
\end{figure}

For the creep test, we apply a traction $\bg(t) = (1,0,0)$ for $t\in[t_{\min},t_{\max}]=[0.3,3.0]$, and $\bg\equiv\cero$ otherwise, with $\Delta t=0.1$, running the simulation until $t_{\mathrm{end}}=5$. We track the $\bL^2(\Omega)$/$\bbL^2(\Omega)$ norms of the reconstructed displacement $\bu_h(t)$ (obtained exactly as in~\eqref{eq:reconstructed-displacement}, by backward-Euler time-integration of $\bv_h$ from the undeformed state) and of the elastic, viscous, and total stresses. Figure~\ref{fig:creepFF} shows the expected Kelvin--Voigt creep response: while the traction is active, the displacement rises and saturates at its elastic limit, the viscous stress decays from its initial peak as the elastic stress increases to sustain the (approximately constant) total stress, and once the load is released at $t=t_{\max}$ all quantities relax back to zero. This reproduces, qualitatively, the Kelvin--Voigt response of \cite[Fig.~5]{rognes10} and the creep transients of \cite[Fig.~6.3, bottom-left]{meddahi23}, and we note that the recovery after unloading in Figure~\ref{fig:creepFF} is governed by a single time constant. Sample of solutions of the 3D deformed domain at three time instants are shown in Figure \ref{fig:creep-3D-deformed}.

\begin{figure}[t!]
\centering
\begin{overpic}[width=0.325\linewidth]{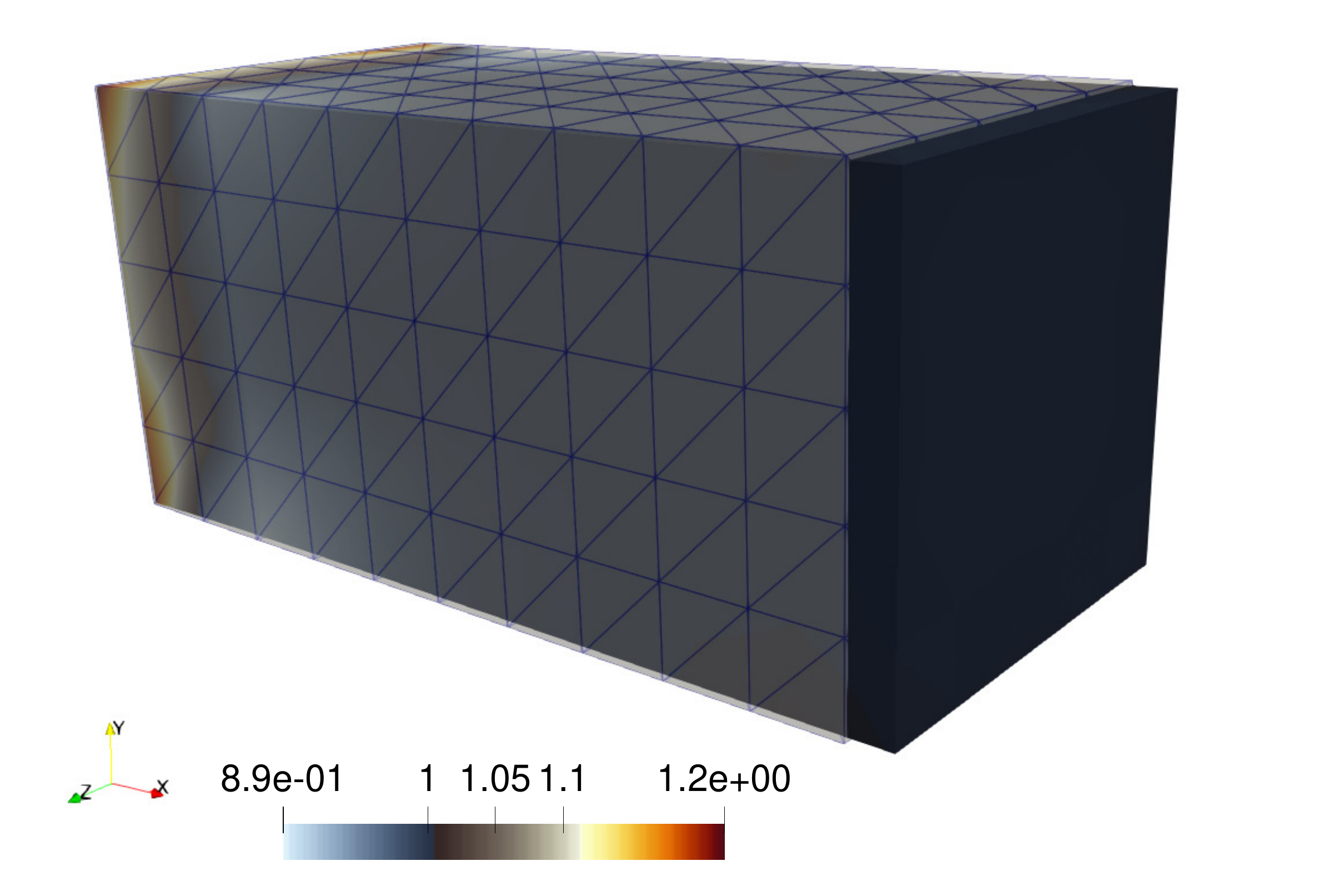}
\put(24,14){\small$\|\sigma_h\|$}\end{overpic}
\begin{overpic}[width=0.325\linewidth]{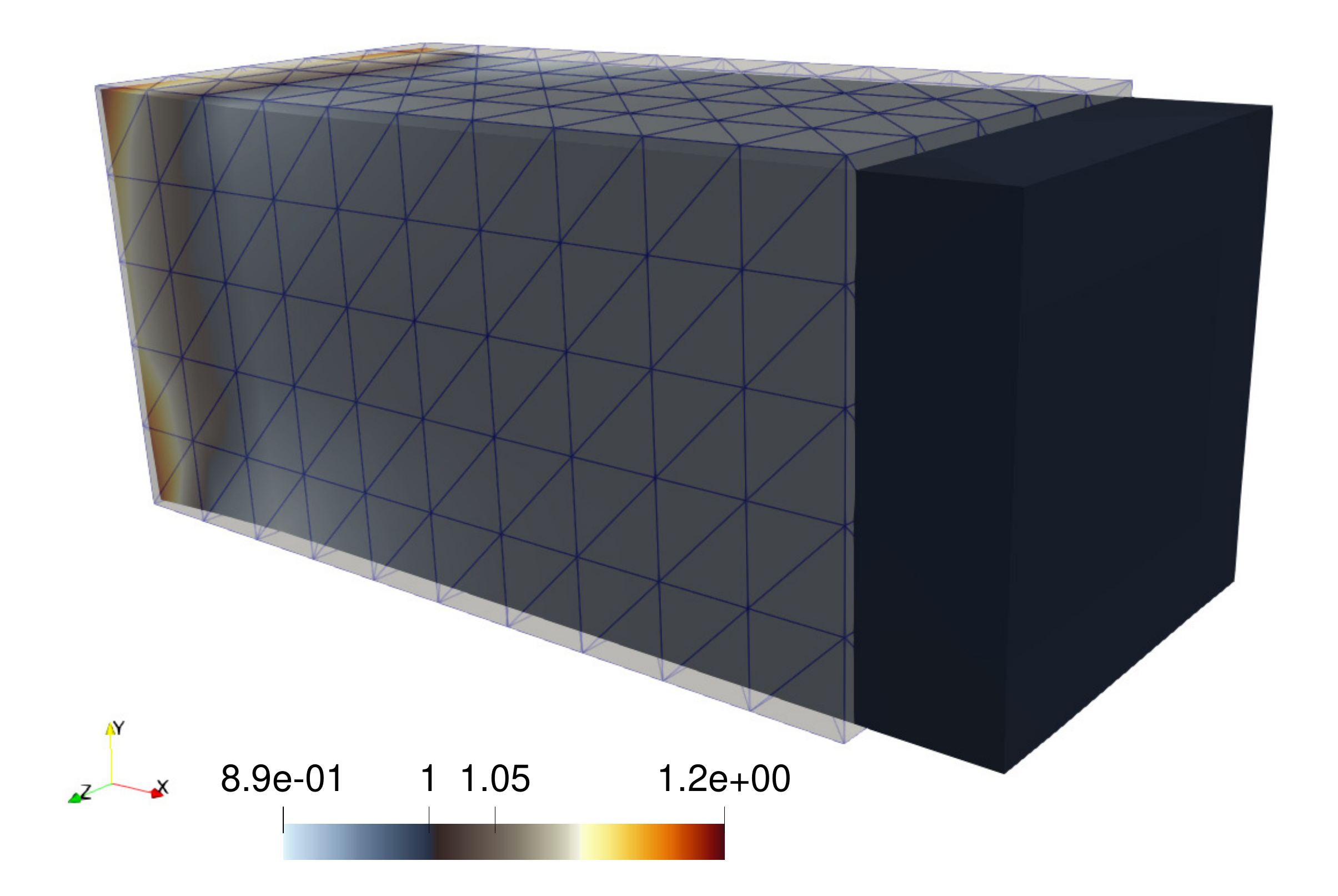}
\put(24,14){\small$\|\sigma_h\|$}\end{overpic}
\begin{overpic}[width=0.325\linewidth]{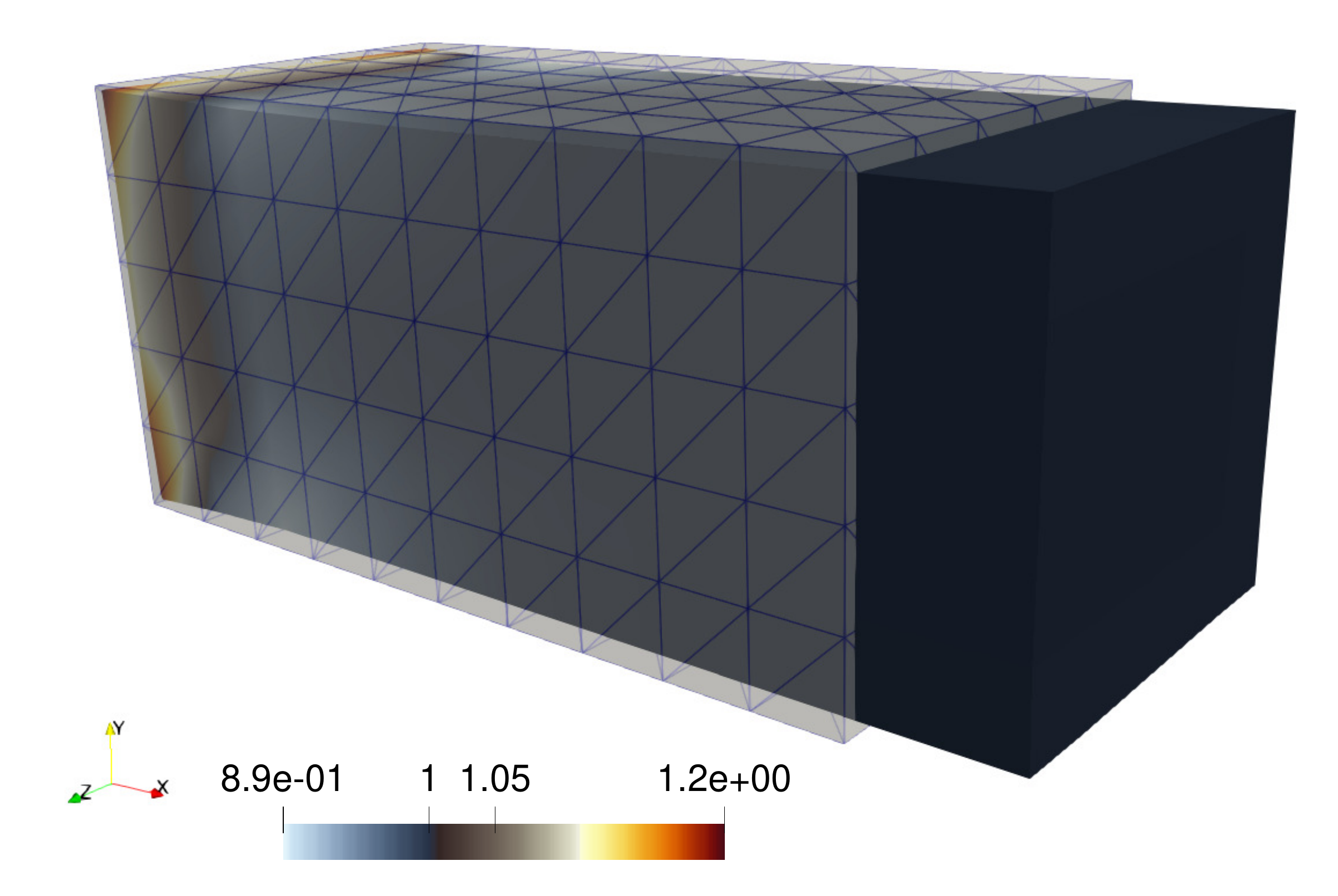}
\put(24,14){\small$\|\sigma_h\|$}\end{overpic}
\caption{Deformed configuration (100x magnified displacement) of the viscoelastic slab during the creep test of Figure~\ref{fig:creepFF}, on a coarse mesh and at three time instants during loading and recovery ($t=0.4$, $t=0.9$, $t=2.9$) coloured by the magnitude of the total stress $\bsigma_1+\bsigma_2$ (hollow undeformed domain is also shown for reference).}
\label{fig:creep-3D-deformed}
\end{figure}

Next, we replace the traction pulse by the oscillatory loading $\bg(t) = (0.5\sin(\pi t), 0, 0)$ for $t\le t_{\max}=3$, released thereafter, with $\Delta t=0.05$, as in \cite[Example~3]{meddahi23}, again running until $t_{\mathrm{end}}=5$. At every time step we evaluate, at the point $\mathbf{p}=(0.5,0.25,0.25)$ of the domain (the same point used in~\cite{meddahi23}), the $x$-component of $\bu_h$, the $xx$-components of $\bsigma_{1,h}$, $\bsigma_{2,h}$, and their sum, and the $xx$-component of the strain recovered from the elastic branch's inverse constitutive law, $\beps_h(\mathbf{p};t) = \cA_1\bsigma_{1,h}(\mathbf{p};t)$, which by the first relation in~\eqref{eq:Ais} coincides exactly with $\beps(\bu_h)(\mathbf{p};t)$. Figure~\ref{fig:cyclicFF} reports these transients together with the resulting $\sigma_{xx}$-versus-$\varepsilon_{xx}$ stress--strain curve. The displacement and stresses go into a periodic response with a phase lag between the elastic and viscous branches, and the stress--strain shows an open hysteresis loop (its enclosed area quantifies the energy dissipated per cycle by the dashpot). After unloading at $t=t_{\max}$ the trajectory spirals into the origin as the free oscillation decays. This is in qualitative agreement with the behaviour reported in \cite[Fig.~6.3, bottom-right]{meddahi23}.

\begin{figure}[t!]
\centering
\includegraphics[width=\linewidth]{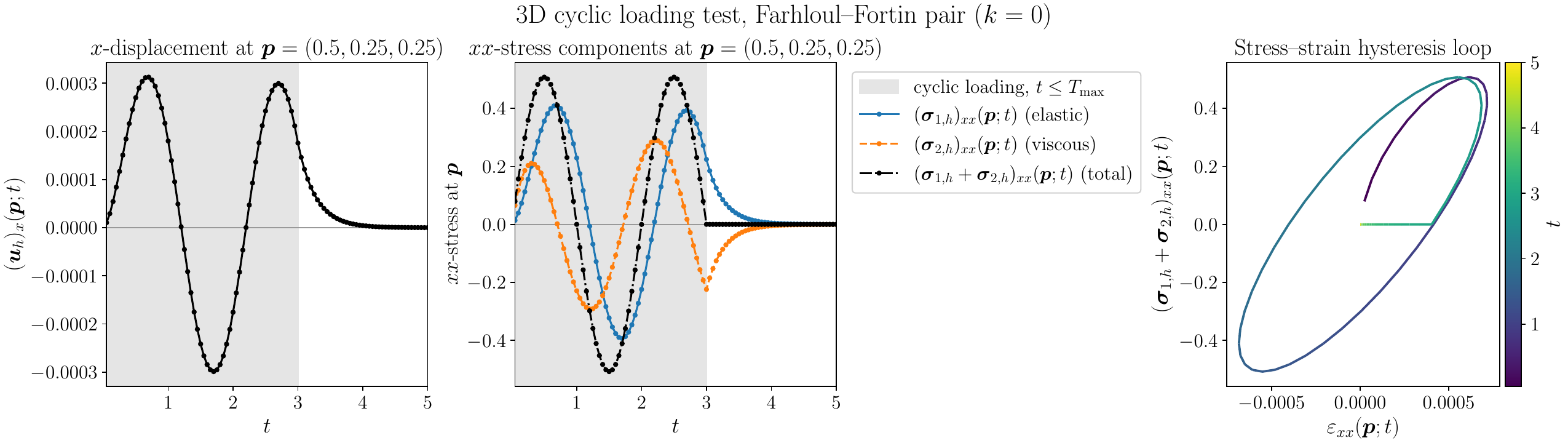}

\vspace{-0.3cm}
\caption{Cyclic loading test for the uncoupled viscoelasticity block, Farhloul--Fortin pair with $k=0$, in the three-dimensional slab $\Omega=(0,1)\times(0,0.5)\times(0,0.5)$, evaluated at the point $\mathbf{p}=(0.5,0.25,0.25)$. Left: $x$-displacement. Centre: $xx$-components of the elastic, viscous, and total stress. Right: $\sigma_{xx}$-versus-$\varepsilon_{xx}$ hysteresis loop, colour-coded by time; the strain is recovered from $\bsigma_{1,h}$ via the elastic inverse constitutive law.}
\label{fig:cyclicFF}
\end{figure}

\subsection{Mechanically driven pattern formation and calcium waves}
We close with a test that illustrates how the calcium dynamics change as the mechanochemical couplings are switched on one at a time, starting from the purely reaction--diffusion model of Atri et al.~\cite{atri93} and ending with the fully coupled model of~\cite{kaouri19,kaouri22}. We consider the following hierarchy, where all other parameters and the initial stimulus are kept fixed:
(A) \emph{Atri model:} $\alpha_0=\beta=0$. Then $\bv\equiv\cero$ and the calcium system decouples;
(B) \emph{one-way coupling:} $\alpha_0>0$, $\beta=0$. Calcium generates an active stress and deforms the tissue, but the deformation does not act back on the calcium;
(C) \emph{two-way coupling:} $\alpha_0>0$, $\beta>0$. The  source $\beta\,\vdiv\bu$ closes the mechanochemical feedback loop;
(D) as (C), with the viscosity of the dashpot (that is, $\mu_2,\lambda_2$) scaled by a factor between $10^{-2}$ and $1$.

The domain is the square $\Omega=(-a,a)^2$ with $a=1.43\sqrt{\pi}/2\approx 1.267$, which has the same area as the disc of radius $1.43$ used in~\cite{kaouri22}. The tissue is clamped at the bottom edge, $\Gamma^{\bu}=\{y=-a\}$, and traction-free on the remaining three sides, $\Gamma^{\bsigma}=\partial\Omega\setminus\overline{\Gamma^{\bu}}$. For the calcium we take zero flux on the whole boundary, $\bz\cdot\bn=0$ on $\partial\Omega$, instead of~\eqref{eq:bc-phi}. We use the adimensionalisation of~\cite{kaouri22}, with time scale $\tau_j=2\,$s and length scale $L=100\,\mu$m. The reaction terms are~\eqref{eq:specific-reaction-functions} with
\[
\gamma_0 = 13.886,\quad \gamma_1 = 0.111,\quad \gamma_2 = 5.714,\quad \gamma_3 = 0.143,\qquad \diff = 0.004\,\bbI.
\]
For these values the homogeneous steady state of~\eqref{eq:calcium-dynamics-1}--\eqref{eq:calcium-dynamics-2} with $\beta=0$, namely $\varphi^*=0.5563$ and $\theta^*=1/(1+(\varphi^*)^2)$, is an unstable focus. In this regime a single localised stimulus produces a train of calcium waves, rather than the solitary pulse obtained in the excitable regime. The Young modulus is $E=44.26\,$Pa, except in a layer of width $0.017$ along $\partial\Omega$ where it is $E=24.34\,$Pa (with the same area fraction as the softer rim of the disc in~\cite{kaouri22}). We use $\nu=0.4$ and viscosities $\eta_1=3790\,$Pa\,s and $\eta_2=550\,$Pa\,s. This gives, in the stiffer core, the adimensional Lam\'e parameters
\[
\mu_1 = 0.5,\quad \lambda_1 = 0.0633,\quad \mu_2 = 29.97,\quad \lambda_2 = 0.2751,
\]
and $\varrho=10^{-3}$, so that the response is essentially quasi-static. The active stress is~\eqref{eq:def-Hill-function} with $\alpha_0=T_0(1+\nu)/E=0.158$ (corresponding to $T_0=5\,$Pa) in cases (B)--(D).
Instead of starting from an arbitrary state, we compute the rest state of the coupled system: $\bv=\cero$, $\cA_2\bsigma_2=-\alpha(\varphi)/(2\mu_2+d\lambda_2)\,\bbI$, and $(\bsigma_1,\bu,\varphi)$ solve the stationary mixed elasticity problem with $(\bsigma_1+\bsigma_2)\bn=\cero$ on $\Gamma^{\bsigma}$, coupled to the stationary calcium equation. This nonlinear problem is solved with Newton's method, using continuation in $\beta$. For $\beta>0$ the resulting rest state is non-uniform, because the clamp makes $\vdiv\bu$ vary in space. For instance, for $\beta=2$ we have $\varphi\in[0.574,0.669]$ and $\vdiv\,\bu\in[-0.158,0.156]$, with the extreme values attained near the two corners where $\Gamma^{\bu}$ meets $\Gamma^{\bsigma}$. The initial condition is this rest state plus the calcium pulse $\exp(-|\bx|^2/0.01)$ centred at the origin.

\begin{figure}[t!]
\centering
\includegraphics[width=\linewidth]{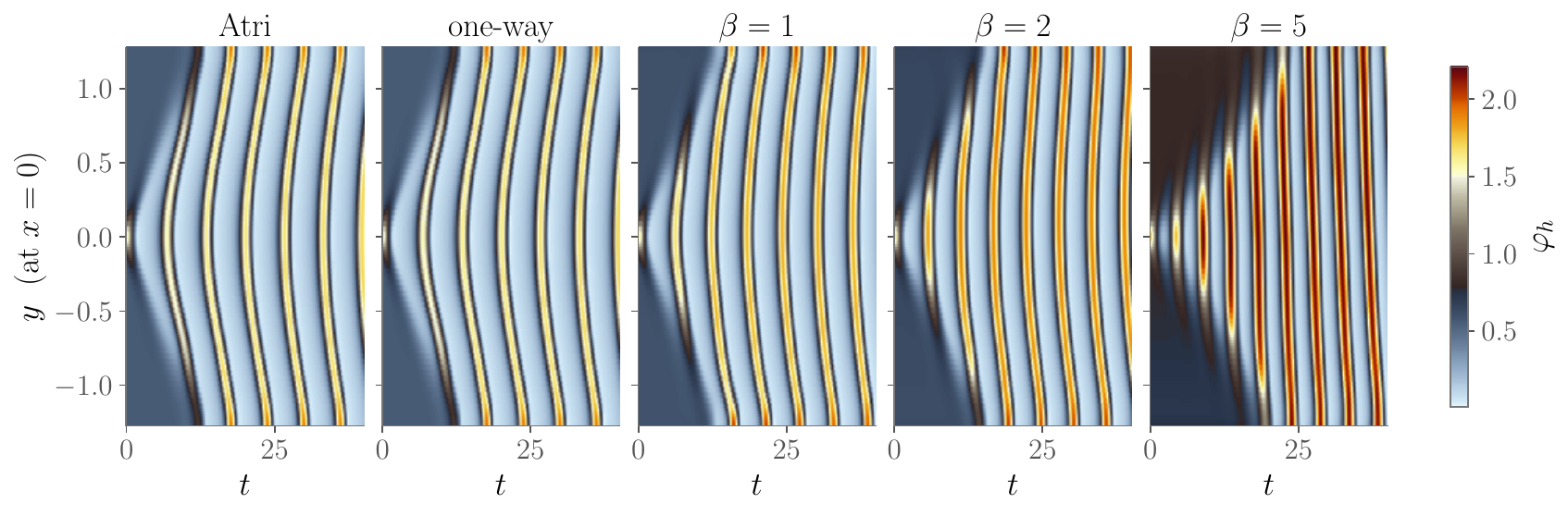}

\vspace{-0.3cm}
\caption{Kymographs of $\varphi_h$ along $x=0$ (the clamped edge is at the bottom) for the hierarchy (A)--(C): Atri model ($\alpha_0=\beta=0$), one-way coupling ($\beta=0$), and two-way coupling with $\beta\in\{1,2,5\}$. Mixed formulation, Farhloul--Fortin family with $k=0$.}
\label{fig:calcium-kymo}
\end{figure}

We use the lowest-order Farhloul--Fortin family ($k=0$) for the viscoelasticity  and $(\bZ_h^{\rm BDM},\Phi_h,\Theta_h)$ with $\mathrm{BDM}_1$ flux and piecewise constant $\varphi_h,\theta_h$ for the calcium. The mesh has $134\times134\times2 = 35\,912$ triangles, giving $523\,137$ (viscoelasticity) plus $180\,096$ (calcium) degrees of freedom. The time step is $\Delta t=0.1$ and $t_{\rm end}=40$. At each step we first solve the (linear) viscoelasticity block with the active stress evaluated at $\varphi_h^n$, whose matrix is constant and is therefore factorised only once. We then solve the calcium block with $\tr\bsigma_{1,h}^{n+1}$, implicit diffusion and explicit reaction terms. Since the active stress enters $\bsigma_2$, the individual tractions $\bsigma_1\bn$ and $\bsigma_2\bn$ do not vanish on $\Gamma^{\bsigma}$ (at rest $\bsigma_2\bn=-\alpha(\varphi)\bn$); only their sum does. Therefore, instead of imposing $\bsigma_{1,h}\bn=\bsigma_{2,h}\bn=\cero$ essentially, we impose the total traction weakly, by adding to the first two equations of the discrete version of~\eqref{eq:mixed-weak} the penalty terms
\[
\frac{\gamma}{h_F}\,\big((\bsigma_{1,h}+\bsigma_{2,h})\bn,\btau_i\bn\big)_{\Gamma^{\bsigma}}\, ,\qquad i\in\{1,2\}\, ,
\]
with $h_F$ the facet size and $\gamma=10$. The discrete rest state described above is an exact fixed point of the resulting scheme: without stimulus, $\varphi_h$ stays at rest up to $10^{-12}$. With $\gamma\in\{1,10,100\}$ the weakly imposed traction satisfies $\max_t\|(\bsigma_{1,h}+\bsigma_{2,h})\bn\|_{0,\Gamma^{\bsigma}}\approx 10^{-4},\,10^{-5},\,10^{-6}$, respectively, and the calcium changes by less than $5\times10^{-4}$. As a reference we also solve the same model in primal form used in \cite{kaouri22}, with displacement and pressure as unknowns ($\bP_1$ plus bubble and $\mathrm{P}_1$), a $\mathrm{P}_1$ calcium approximation, the same mesh, and the same splitting.

\begin{figure}[t!]
\centering
\includegraphics[width=0.8\linewidth]{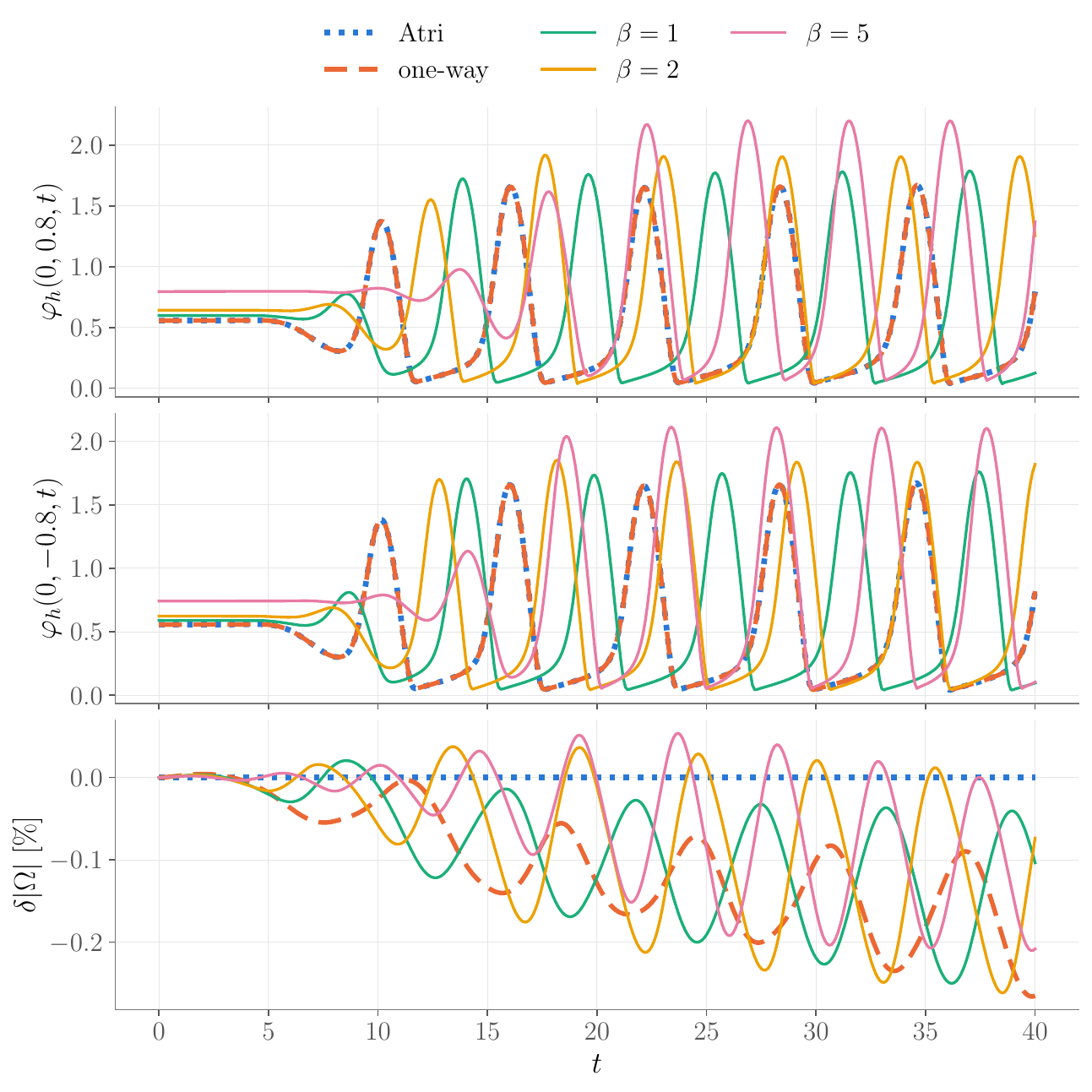}
\caption{Calcium at the points $(0,0.8)$ (towards the free edge, top) and $(0,-0.8)$ (towards the clamped edge, middle), and relative area change $\delta|\Omega|$ with respect to the rest state (bottom), for the hierarchy (A)--(C).}
\label{fig:calcium-series}
\end{figure}

Figure~\ref{fig:calcium-kymo} shows kymographs (space-time plots) of $\varphi_h$ along the vertical line $x=0$, and Figure~\ref{fig:calcium-series} shows time series of $\varphi_h$ at the points $(0,\pm0.8)$ together with the relative area change
\[
\delta|\Omega|(t) := \frac{100}{|\Omega|}\int_\Omega \vdiv(\bu_h-\bu_{\rm rest}) = \frac{100}{|\Omega|}\int_\Omega \tr\,\cA_1(\bsigma_{1,h}-\bsigma_{1,{\rm rest}})\quad [\%]\, .
\]
In case (A) the stimulus generates target waves that propagate from the centre at speed $\approx0.18$ with period $6.23$ and reach the boundary at $t\approx 10$. Behind them the (unstable) rest state develops bulk oscillations of the same period. In case (B) the calcium is unchanged, since the curves for (A) and (B) overlap, while the tissue now dilates and contracts at the period of the wavetrain. The area change oscillates with a peak-to-peak amplitude of about $0.1\%$ on top of a slow drift, whose time scale is the Kelvin--Voigt retardation time ($\approx 54$ time units). Once the feedback is active (case C), the dilation-dependent source raises the rest level of calcium (at the point $(0,0.6)$ it is $0.598$, $0.642$ and $0.793$ for $\beta=1,2,5$), and the wavetrain becomes faster and stronger. The period decreases from $6.23$ to $5.78$, $5.42$ and $4.60$, and the peak concentration increases from $1.91$ to $2.03$, $2.05$ and $2.21$, respectively. For $\beta=5$ the first few fronts are also delayed and weakened before the train is established. For larger coupling (e.g.\ $\beta=10$) the positive feedback prevents the existence of a proper rest state.

 \begin{figure}[t!]
\centering
\includegraphics[width=\linewidth]{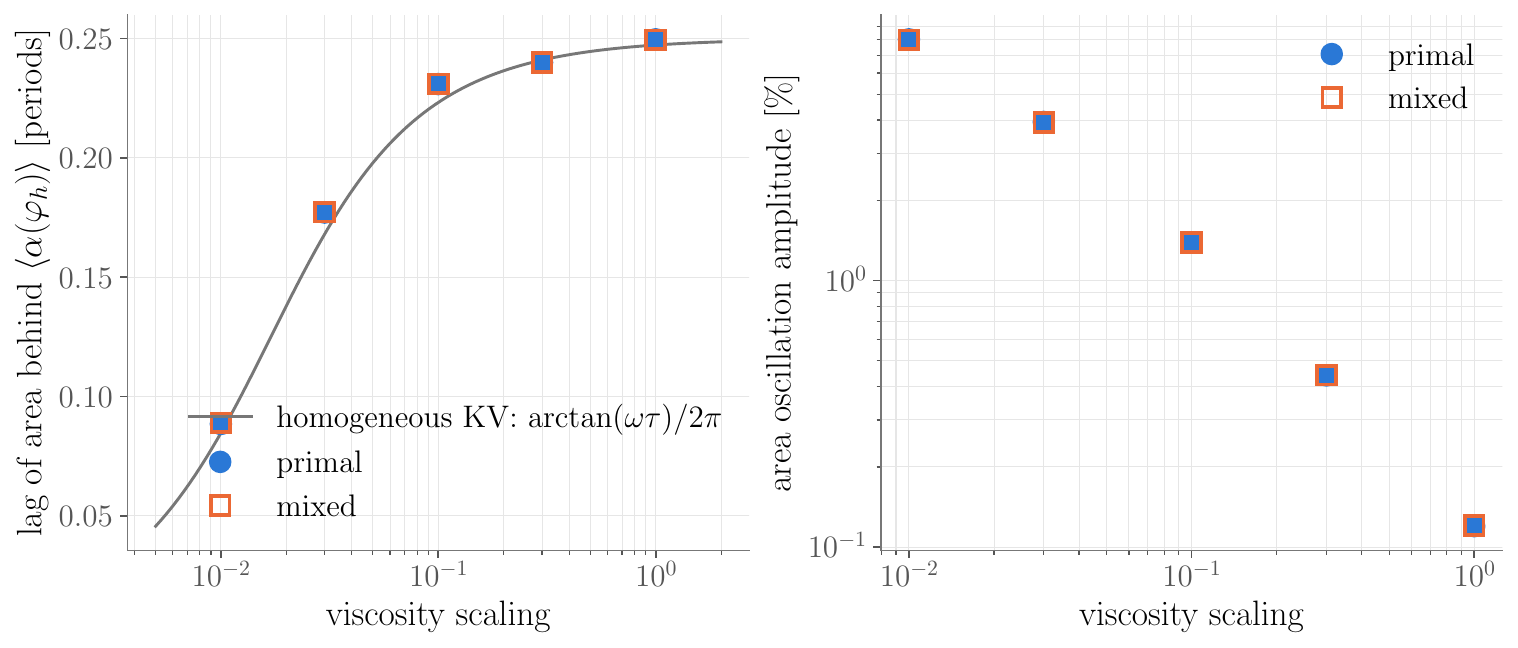}

\vspace{-0.3cm}
\caption{Viscosity sweep for $\beta=2$ (case D). Left: lag of the area change behind the mean active stress, in periods of the wavetrain, together with the homogeneous Kelvin--Voigt estimate. Right: amplitude of the oscillation of $\delta|\Omega|$.}
\label{fig:calcium-lag}
\end{figure}
 
 Finally, for $\beta=2$ we scale $\mu_2,\lambda_2$ by a factor $s\in\{0.01,0.03,0.1,0.3,1\}$ (case D). For a homogeneous Kelvin--Voigt body subject to an isotropic active stress oscillating at frequency $\omega$, the dilation lags behind the active stress by $\arctan(\omega\tau)/\omega$, with retardation time $\tau=(\lambda_2+\mu_2)/(\lambda_1+\mu_1)$ (in 2D). We measure this lag as the phase difference, at the frequency of the wavetrain, between the spatial mean of $\alpha(\varphi_h)$ and $\delta|\Omega|$ over $t\in[20,40]$. Figure~\ref{fig:calcium-lag} shows the lag and the amplitude of the area oscillation as functions of $s$. The lag increases from $0.09$ to   the quarter period of the purely viscous limit. It follows the homogeneous estimate up to $5\%$ for $s\geq0.03$ (and $15\%$ for $s=0.01$), even though the clamped boundary makes the deformation inhomogeneous. The amplitude of the area oscillation decreases from $8\%$ to $0.12\%$. The viscosity therefore acts as a low-pass filter on the mechanochemical feedback. The phase of the feedback also changes the wavetrain (its period grows from $5.42$ for $s=1$ to $6.45$ for $s=0.01$, which is longer than the period of the Atri model). The two formulations  give lags that agree to   $0.001$ periods, and amplitudes that agree to   $0.6\%$.

Figure~\ref{fig:calcium-deformed} shows the deformed configuration for $\beta=2$ at four instants of an established cycle of the wavetrain, with the displacement relative to the rest state, $\bu_h-\bu_{\rm rest}$, magnified 200 times. At $t=28.6$ most of the tissue is near the peak of the calcium oscillation (the spatial mean of $\varphi_h$ is $1.39$) and the tissue is dilated, with the largest displacements at the free top corners. At $t=29.6$ and $t=30.2$ the calcium decreases in the interior, while the last part of the excitation reaches the corners. The total stress $\bsigma_{1,h}+\bsigma_{2,h}$ concentrates at the two corners where the clamped edge meets the traction-free edges and reaches its maximum there. At $t=32.8$ a new front leaves the centre, the stress has relaxed to low values, and the tissue deforms again as the viscous response follows the calcium. We do not observe radially symmetric patterns  since our boundary conditions differ from the pure stress-free configuration used in \cite{kaouri22}.

\begin{figure}[t!]
\centering
\includegraphics[width=0.245\linewidth]{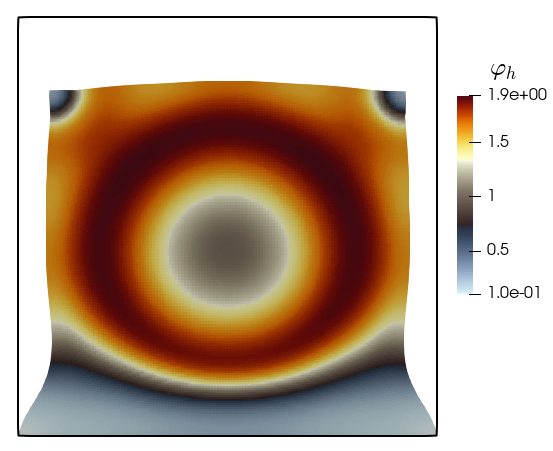}
\includegraphics[width=0.245\linewidth]{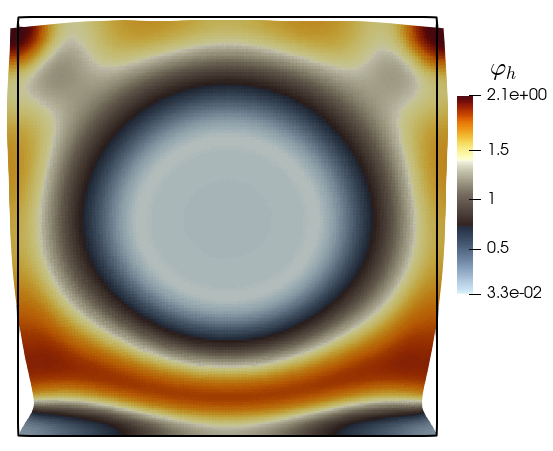}
\includegraphics[width=0.245\linewidth]{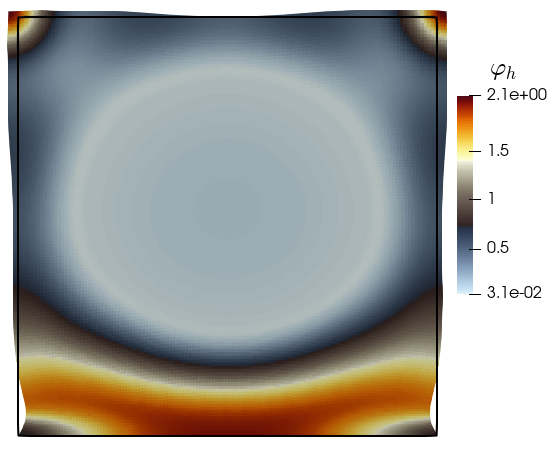}
\includegraphics[width=0.245\linewidth]{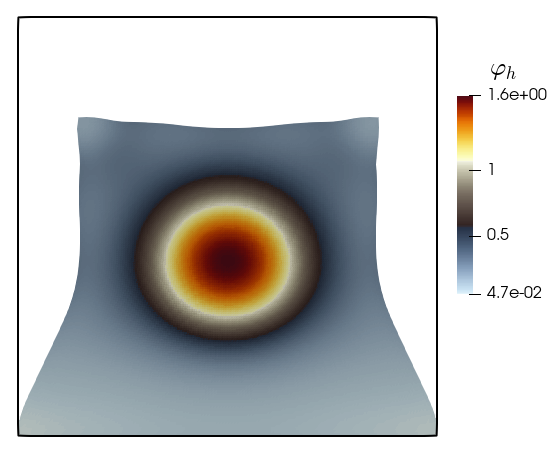}\\
\includegraphics[width=0.245\linewidth]{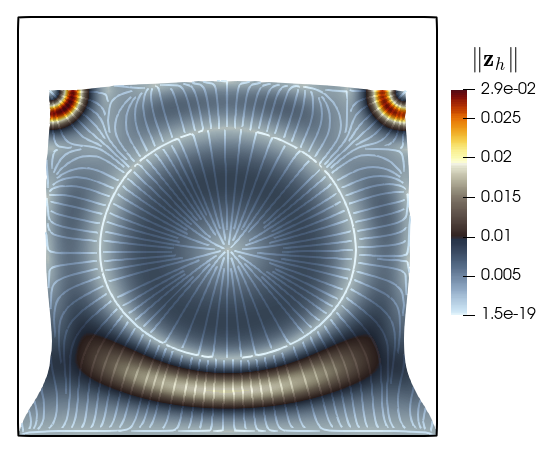}
\includegraphics[width=0.245\linewidth]{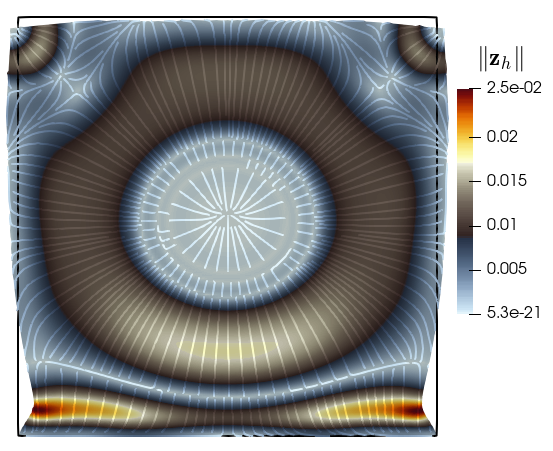}
\includegraphics[width=0.245\linewidth]{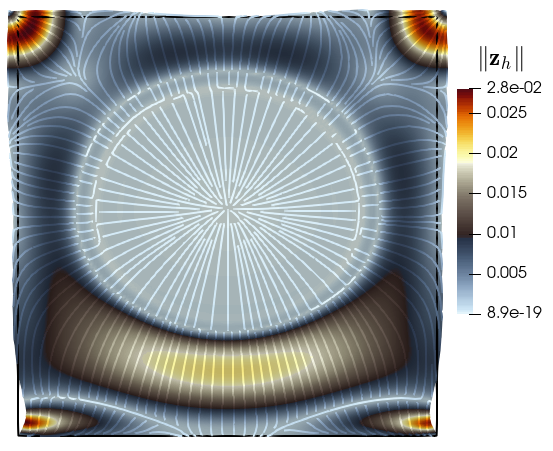}
\includegraphics[width=0.245\linewidth]{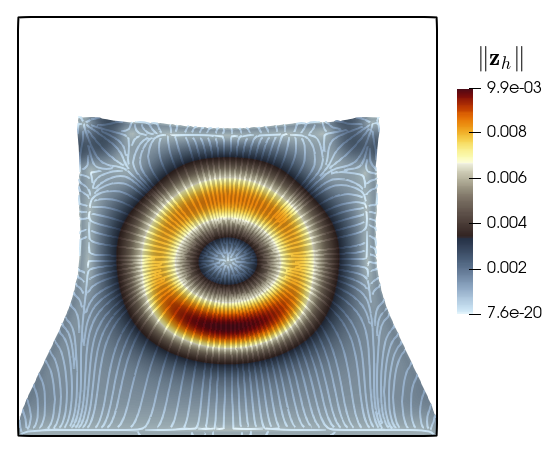}\\
\includegraphics[width=0.245\linewidth]{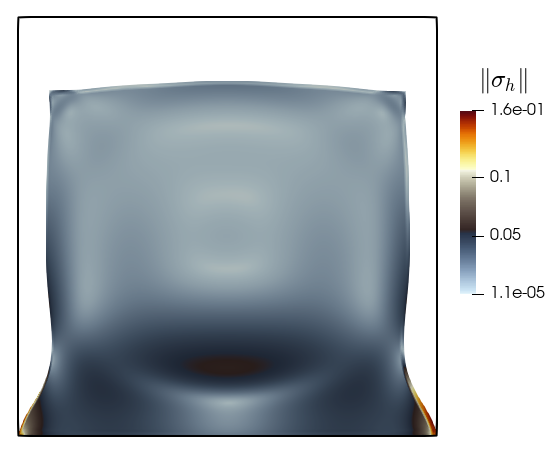}
\includegraphics[width=0.245\linewidth]{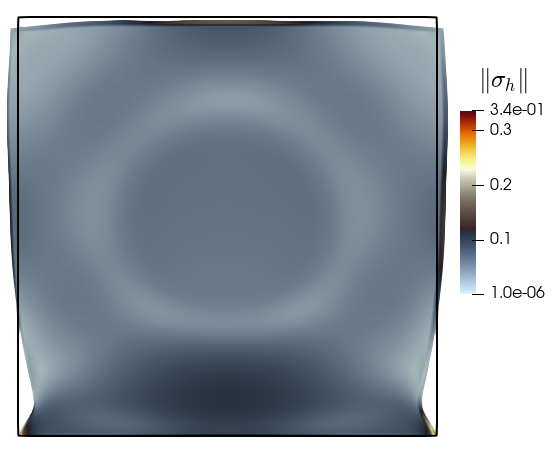}
\includegraphics[width=0.245\linewidth]{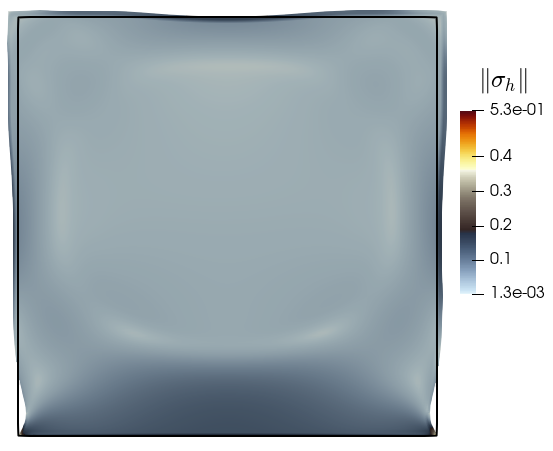}
\includegraphics[width=0.245\linewidth]{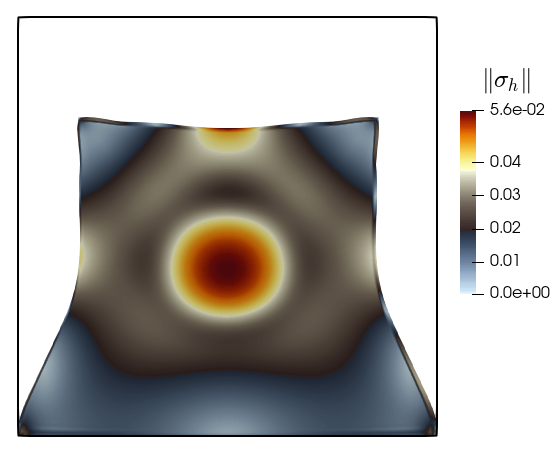}

\vspace{-0.2cm}
\caption{Sample numerical solutions for Case (C) with $\beta=2$. Calcium concentration $\varphi_h$ (top), magnitude and streamlines of the diffusive flux $\bz_h$ (middle row), and magnitude of the total stress $|\bsigma_{1,h}+\bsigma_{2,h}|$ (bottom) at $t=28.6,\,29.6,\,30.2,\,32.8$, plotted on the configuration deformed by $\bu_h-\bu_{\rm rest}$ (magnified $200\times$). Mixed formulation, Farhloul--Fortin family with $k=0$.}
\label{fig:calcium-deformed}
\end{figure}


\bibliographystyle{siam}
\small
\bibliography{blr_bib}

\end{document}